\documentclass[12pt]{amsart}

\usepackage{amsmath}
\usepackage{amsthm}
\usepackage{amssymb}
\usepackage{amsfonts}
\usepackage{bm}
\usepackage[shortlabels]{enumitem}
\usepackage[bookmarks=true,hyperindex,pdftex,colorlinks,citecolor=red, linkcolor=blue]{hyperref}
\usepackage{centernot} 
\usepackage{marginnote} 
\usepackage[left=3cm,right=3cm,top=3cm,bottom=2.5cm]{geometry}
\usepackage{bbm}
\usepackage[all]{xy}
\usepackage{tikz}
\usepackage{comment}
\usetikzlibrary{arrows}
\usetikzlibrary{shapes,decorations}
\usetikzlibrary{positioning}
\usepackage{xcolor}
\usepackage{bigints}
\usepackage{enumitem}
\usepackage{makecell}
\usepackage{arydshln}

\theoremstyle{plain}
\newtheorem{thm}{Theorem}[section]
\newtheorem{prop}[thm]{Proposition}
\newtheorem{lemma}[thm]{Lemma}
\newtheorem{cor}[thm]{Corollary}
\newtheorem{remark}[thm]{Remark}

\usepackage{makecell}
\theoremstyle{definition}
\newtheorem{df}[thm]{Definition}
\newtheorem{q}[thm]{Question}

\newcommand{\N}{\mathbb{N}}

\newcommand{\D}{\mathbb{D}}

\newcommand{\norm}[1]{\left\Vert #1\right\Vert}
\usepackage{multirow}
\definecolor{darkgreen}{rgb}{.2, .6, .2}

\begin{document}
	
	\title{Rates of Decay for $(\alpha, \beta)$-Ritt-Kreiss Operators}
	
	\author[L. Arnold]{Loris Arnold}

	\address[L. Arnold]{Normandie Univ, UNICAEN, CNRS, LMNO, 14000 Caen, France}
	\email{lfj.arld@gmail.com}

	\date{} 
	
	\begin{abstract}
		This paper investigates the growth of the sequences $(\|T^n(I-T)^k\|)_{n\ge 1}$ for bounded linear operators on Banach spaces under various resolvent conditions. Focusing on $\alpha$-Ritt operators that are also strongly Kreiss bounded, we show that the growth estimates established by Nevanlinna for Kreiss bounded operators can be significantly refined within the Hilbert space setting, nearly attaining the optimal rates obtained by Seifert for the more restrictive power-bounded case. We further introduce the class of $(\alpha, \beta)$-RK operators as a generalization of both Ritt and Kreiss-type conditions. For these operators, we derive comprehensive growth estimates which, for certain ranges of $\alpha$ and $\beta$, yield improvements over existing bounds in the literature.
		
		Particular attention is given to $\beta$-Kreiss operators, for which we provide a characterization via Cesàro-type means. We show that, in contrast to the well-known case $\beta = 1$, the power growth estimate $\|T^n\| = O(n^{\beta})$ is sharp whenever $\beta > 1$. The optimality of our estimates is discussed in several cases, relying on constructions and techniques developed by Nevanlinna, Spijker, and Borovykh. We conclude by providing a characterization of Ritt operators that appears to be absent from the literature.
	\end{abstract}

	\subjclass[2020]{Primary 47A05, 47A10; Secondary 47D03, 47A35, 34D05, 35B40}
	

	\keywords{Ritt condition, Kreiss condition, Cesàro
		boundedness, rate of decay, }
	
	\maketitle

	\section{Introduction}
	
	The classical Katznelson-Tzafriri theorem asserts that for a power-bounded operator $T$ on a Banach space $X$, the sequence $\|T^n(I-T)\|$ converges to zero as $n \to \infty$ if and only if the spectrum of $T$ satisfies $\sigma(T)\cap \mathbb{T} \subset \{1\}$. To obtain quantitative rates for this decay, additional conditions on the resolvent are required. A prominent class in this context is that of $\alpha$-Ritt operators for $\alpha \ge 1$, defined by the condition:
	\begin{equation}\label{alpharitt}
		\sigma(T) \subset \overline{\D} \quad \text{and} \quad \|R(\lambda,T)\| \le \frac{C}{|\lambda-1|^\alpha}, \quad 1<|\lambda|<2.
	\end{equation}
	Under the assumption of power-boundedness, Seifert \cite{Sei} proved that 
	\begin{equation}\label{seifert}
		\|T^n(I-T)\| = O(n^{-1/\alpha} \log(n+1)).
	\end{equation}
	The case $\alpha=1$ corresponds to the classical Ritt operators, which can be characterized as those power-bounded operators for which the optimal decay $\|T^n(I-T)\| = O(n^{-1})$ holds. More generally, Ritt operators admit the following equivalent characterizations:
	
	\begin{thm}\label{Ritt}
		Let $T\in B(X)$ with $\sigma(T) \subset \overline{\D}$. The following are equivalent:
		\begin{enumerate}[label=(\roman*)]
			\item $T$ is a Ritt operator;
			\item For $\alpha,\beta>0$ with $\alpha+\beta = 1$, 
			$$\|R(\lambda,T)\| \le \frac{C}{|\lambda-1|^\alpha (|\lambda|-1)^\beta}, \quad  1<|\lambda|<2;$$ 
			\item $T$ is power-bounded and, for some (or all) $k\in \mathbb{N}$, $\|T^n(I-T)^k\| = O(n^{-k})$.
		\end{enumerate}
	\end{thm}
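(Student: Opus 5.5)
We prove (i)$\Leftrightarrow$(iii) by quoting the classical results, and (i)$\Leftrightarrow$(ii) by a direct resolvent argument. Throughout we use the standard definition: $T$ is Ritt if $\sigma(T)\subset\overline{\D}$ and $\|R(\lambda,T)\|\le C|\lambda-1|^{-1}$ for $|\lambda|>1$. This is equivalent to requiring the bound on $1<|\lambda|<2$, because for $|\lambda|\ge 2$ the Neumann series gives $\|R(\lambda,T)\|\lesssim |\lambda|^{-1}\asymp|\lambda-1|^{-1}$ (enlarging the annulus slightly if $\|T\|\ge 2$).

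\textbf{(i)$\Leftrightarrow$(iii).} This is the known characterization of Ritt operators due to Lyubich, Nagy--Zem\'anek and Vitse.
\begin{itemize}
\item[] For (i)$\Rightarrow$(iii), we first note that a Ritt operator is power-bounded. We then write $T^n(I-T)^k$ as a Cauchy integral $\frac{1}{2\pi i}\int_\Gamma \lambda^n(1-\lambda)^k R(\lambda,T)\,d\lambda$ over a contour that is the circle of radius $1+1/n$. Using $|\lambda-1|\gtrsim |\arg\lambda|+1/n$ on this circle, a standard estimate gives $O(n^{-k})$.
\item[] For (iii)$\Rightarrow$(i), we write $(\lambda-1)R(\lambda,T)=(\lambda-1)\sum_{n\ge0}\lambda^{-n-1}T^n$ and sum by parts $k$ times. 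This produces terms $T^n(I-T)^k$ with coefficients of size $|\lambda-1|^{k}\,|\lambda|^{-n}\,\binom{n+k}{k}$-type weights. The hypothesis $\|T^n(I-T)^k\|=O(n^{-k})$ then bounds the sum by $C|\lambda-1|^{k}\sum_n n^{-k}\cdot n^{k-1}|\lambda|^{-n}\cdots$. The real difficulty is the passage from general $k$ to $k=1$; here we would cite the known result that $\sup_n n^k\|T^n(I-T)^k\|<\infty$ for one $k$ together with power-boundedness implies it for $k=1$, proved by the Nagy--Zem\'anek moment/interpolation argument. For $k=1$ the summation by parts closes directly.
\end{itemize}

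\textbf{(i)$\Rightarrow$(ii).} Since $|\lambda-1|\ge|\lambda|-1$, we have
\[
|\lambda-1|^\alpha(|\lambda|-1)^\beta\le|\lambda-1|^{\alpha+\beta}=|\lambda-1|,
\]
so the Ritt bound implies the bound in (ii) with the same constant.

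\textbf{(ii)$\Rightarrow$(i).} This is the main obstacle, since the bound in (ii) is weaker than the Ritt bound near $\mathbb{T}$. We proceed in three steps.
\begin{enumerate}
\item Setting $r=|\lambda|$, condition (ii) gives on each circle $|\lambda|=r$ the estimate $\|R(\lambda,T)\|\le C(r-1)^{-\beta}|\lambda-1|^{-\alpha}$. Integrating in Cauchy's formula over $|\lambda|=1+1/n$ yields
\[
\|T^n(I-T)\|\lesssim n^{\beta}\int_{-\pi}^{\pi}(|\theta|+1/n)^{1-\alpha}\,d\theta .
\]
Since $\alpha<1$, the integral is bounded, so this gives only $\|T^n(I-T)\|=O(n^{\beta})$, which is too crude.
\item Instead, we use the resolvent identity to bootstrap. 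For $\lambda$ with $|\lambda-1|$ comparable to $|\lambda|-1$ (a Stolz-type sector at $1$), (ii) already gives $\|R(\lambda,T)\|\lesssim|\lambda-1|^{-1}$. For $\lambda$ near $\mathbb{T}$ but away from the sector, we compare with $\mu=\lambda(1+\delta)$, where $\delta\asymp|\lambda-1|$, and apply the Neumann series
\[
R(\lambda,T)=R(\mu,T)\sum_j(\mu-\lambda)^jR(\mu,T)^j .
\]
This series needs $|\mu-\lambda|\,\|R(\mu,T)\|<1$, which fails for a direct bound. So we iterate along a geometric sequence of radii and track the exponent of $(|\lambda|-1)$.
\item An alternative route, which we would try first, is to show that (ii) implies the Kreiss condition and then invoke the result of Nevanlinna--Lyubich: Kreiss together with a Stolz-sector resolvent estimate gives Ritt. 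If that route is unavailable, we would use the known equivalence between (ii) and a bound on $\sup_{|\lambda|>1}\|(\lambda-1)R(\lambda,T)\|$ obtained via a maximum principle (Phragm\'en--Lindel\"of) applied to the holomorphic function $(\lambda-1)R(\lambda,T)$ on $\{|\lambda|>1\}$. This function is bounded by $C\left(|\lambda-1|/(|\lambda|-1)\right)^\beta$ with $\beta<1$, and after the change of variable $\lambda=e^{z}$ this growth falls within the range where the Phragm\'en--Lindel\"of theorem on a half-strip applies, giving boundedness.
\end{enumerate}
The Phragm\'en--Lindel\"of step in (3) is where we expect the real work to lie.
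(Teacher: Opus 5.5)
The implication (ii)$\Rightarrow$(i) is the only part you cannot simply quote, and it is not established. Neither route in your Step 3 can work.

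The first route rests on a false statement. On an exterior Stolz sector $|\lambda-1|\le K(|\lambda|-1)$ the Kreiss bound alone already gives $\|R(\lambda,T)\|\le CK|\lambda-1|^{-1}$. So ``Kreiss plus a Stolz-sector estimate implies Ritt'' would make every Kreiss bounded operator Ritt, which fails for $T=-I$.

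The second route fails because the bound $C\bigl(|\lambda-1|/(|\lambda|-1)\bigr)^{\beta}$ degenerates along all of $\mathbb{T}$, not at the end of a half-strip. You have no boundary values on $\mathbb{T}\setminus\{1\}$; a priori you do not even know that $\mathbb{T}\setminus\{1\}\subset\rho(T)$. No maximum principle can extract boundedness from that growth alone. Indeed, $F(\lambda)=(1+\lambda^{-1})^{-\beta}$ is holomorphic on $\{|\lambda|>1\}$, satisfies $|F(\lambda)|\le 2^{\beta}\bigl(|\lambda-1|/(|\lambda|-1)\bigr)^{\beta}$ there, and is unbounded near $-1$. Any proof must use the resolvent structure.

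Separately, your sketches for (i)$\Leftrightarrow$(iii) are wrong as written, although the results themselves can be cited. On $|\lambda|=1+1/n$ the Ritt bound only gives $\|(1-\lambda)^kR(\lambda,T)\|\lesssim 1$, hence $\|T^n(I-T)^k\|=O(1)$, not $O(n^{-k})$. For $k=1$, summation by parts leaves a bound of order $\sum_{n\ge1}|\lambda|^{-n}/n=\log\frac{|\lambda|}{|\lambda|-1}$, so it does not close.

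The missing step is the Neumann series you dismissed in Step 2. It works in one step, and this is exactly where $\alpha+\beta=1$ is used. For $1\le|\lambda|\le 3/2$, $\lambda\neq1$, put $\mu=\lambda(1+c|\lambda-1|)$ with $c>0$ small. Then
\begin{itemize}
\item $|\mu-\lambda|\le 2c|\lambda-1|$,
\item $|\mu|-1\ge c|\lambda-1|$,
\item $|\mu-1|\ge(1-2c)|\lambda-1|$, and $|\mu|<2$.
\end{itemize}
So (ii) gives $|\mu-\lambda|\,\|R(\mu,T)\|\le 2Cc^{1-\beta}(1-2c)^{-\alpha}=2Cc^{\alpha}(1-2c)^{-\alpha}\le\frac12$. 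Hence $\lambda\in\rho(T)$ and $\|R(\lambda,T)\|\le 2\|R(\mu,T)\|\le C'|\lambda-1|^{-1}$. Together with compactness on $3/2\le|\lambda|\le2$, this is the Ritt condition; it is Lemma~\ref{RKres} with $\alpha/(1-\beta)=1$.

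The paper argues differently, via Theorem~\ref{Ritt2} with $m=1$. It proves (ii)$\Rightarrow$(iii) by writing $T^n$ and $T^n(I-T)$ as Cauchy integrals of $R(\lambda,T)^k$ with $k\alpha>2$ over circles of radius $1+O(1/n)$. There the factor $\binom{n+k-1}{k-1}^{-1}\asymp n^{1-k}$ absorbs $n^{k\beta}n^{k\alpha-1}$, giving power-boundedness and $\|T^n(I-T)\|=O(1/n)$. It then quotes the classical (iii)$\Rightarrow$(i). The perturbation argument is more elementary and bypasses the classical theorem, while the paper's route yields the decay rates directly.
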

	
	A natural question is whether similar estimates for $\alpha$-Ritt operators hold when the assumption of power-boundedness is relaxed. An operator $T \in B(X)$ is said to be Kreiss bounded if
	\begin{equation}\label{Kreiss}
		\sigma(T)\subset \overline{\mathbb{D}} \quad \text{and} \quad \|R(\lambda,T)\|\le \frac{C}{|\lambda|-1}, \quad |\lambda|>1.
	\end{equation}
	While Kreiss boundedness is equivalent to power-boundedness in finite dimensions, it is strictly weaker in infinite-dimensional spaces. In the latter case, the growth of $\|T^n\|$ can be as fast as $O(n)$, although more refined bounds exist depending on the geometry of the space $X$. For example, if $X$ is a UMD space with finite cotypes $q$ and $q^*$ for $X$ and $X^*$ respectively, and we set $s = \min(q, q^*)$, Theorem 3.1 in \cite{Cuny} states that
	\begin{equation*}\label{KreissBound}
		\|T^n\| = O\Bigg(\frac{n}{\log^{1/s}(n+1)}\Bigg).
	\end{equation*}
	A stronger property, known as strong Kreiss boundedness, was introduced by McCarthy \cite{McCarthy}. It is defined by the following estimate on the powers of the resolvent:
	\begin{equation*}\label{StronglyKreiss}
		\sigma(T)\subset \overline{\mathbb{D}} \quad \text{and} \quad \|R(\lambda,T)^n\|\le \frac{C}{(|\lambda|-1)^n}, \quad |\lambda|>1, \ n\in \mathbb{N}.
	\end{equation*}
	This condition has been the subject of extensive study, and various growth estimates for $\|T^n\|$ have been obtained depending on the geometry of the underlying space $X$ (see  \cite{Nev, CCEL, ArnCun, DLV}):
	\begin{equation*}\label{skresti}
		\|T^n\| = 
		\begin{cases}
			O(n^{1/2}), & \text{in general},\\
			O(\log^{\kappa}(n+1)), & X \text{ a Hilbert space},\\
			O\big(n^{|1/2-1/p|}\log^{\kappa}(n+1)\big), & X \text{ an } L^p\text{-space},\\
			o(n^{1/2}), & X \text{ a UMD space}.
		\end{cases}
	\end{equation*}
	
	One of the primary goals of this work is to study $\alpha$-Ritt operators that satisfy these weaker resolvent conditions. For Kreiss bounded $\alpha$-Ritt operators, Nevanlinna \cite[Theorem 9.]{Nev} established the following growth estimate:
	\begin{thm}\label{Nevanlinna}
		Let $T$ be Kreiss bounded and satisfying \eqref{alpharitt}. Then, for $k\in \mathbb{N}\cup \{0\}$,
		\begin{equation}\label{NevEsti}
			\|T^n(I-T)^k\| = O\big(n^{\frac{\alpha-k-1}{\alpha}}\big).
		\end{equation}
	\end{thm}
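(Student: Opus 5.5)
The approach is to write $T^n(I-T)^k$ as a Cauchy integral and bound it on a contour tuned to the time scale $n$:
\[
T^n(I-T)^k=\frac{1}{2\pi i}\int_{\Gamma}\lambda^n(1-\lambda)^kR(\lambda,T)\,d\lambda ,
\]
where $\Gamma$ is any contour in $\{|\lambda|>1\}$ surrounding $\sigma(T)\subset\overline{\D}$. The natural first choice is the circle $|\lambda|=r$ with $r=1+1/n$, so that $|\lambda|^n\le e$ on $\Gamma$.

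The Kreiss condition \eqref{Kreiss} gives $\|R(\lambda,T)\|\le Cn$ on this circle. For $|\lambda-1|\ge \delta$ the $\alpha$-Ritt condition \eqref{alpharitt} gives the better bound $\|R(\lambda,T)\|\le C|\lambda-1|^{-\alpha}$. Combining the two,
\[
\|R(\lambda,T)\|\le C\min\bigl(n,\;|\lambda-1|^{-\alpha}\bigr).
\]
The crossover occurs at $|\lambda-1|\approx n^{-1/\alpha}$, which is much larger than $1/n$ when $\alpha>1$. Writing $\lambda=re^{i\theta}$, we have $|\lambda-1|\asymp |\theta|+1/n$, so we split the integral into two parts.

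\textbf{Near part, $|\theta|\le n^{-1/\alpha}$.} Here the Kreiss bound applies, and the contribution is at most
\[
\int_0^{n^{-1/\alpha}} n\,\theta^k\,d\theta \asymp n\cdot n^{-(k+1)/\alpha}=n^{(\alpha-k-1)/\alpha}.
\]
The extra $1/n$ in $|\lambda-1|$ only matters for $|\theta|\lesssim 1/n$, where it contributes $O(n\cdot n^{-k-1})$, which is negligible.

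\textbf{Far part, $|\theta|\ge n^{-1/\alpha}$.} Here the Ritt bound applies, and the contribution is at most
\[
\int_{n^{-1/\alpha}}^{\pi}\theta^{k-\alpha}\,d\theta .
\]
If $k+1<\alpha$ this equals $O(n^{(\alpha-k-1)/\alpha})$, the claimed rate. This completes the case $k<\alpha-1$.

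The main obstacle is the range $k+1\ge\alpha$. There the far integral is only $O(1)$ (or $O(\log n)$ when $k+1=\alpha$), whereas \eqref{NevEsti} asks for decay. The crude bound $|\lambda^n|\le e$ wastes the oscillation of $\lambda^n$, so a different device is needed far from $1$. My first attempt would be to deform the contour away from $1$ into a sector-like path (a Stolz-type region) along which $|\lambda|^n$ decays, for instance $|\lambda|^n\le e^{-cn|\theta|^{\alpha}}$. The $\alpha$-Ritt estimate makes this possible: a Neumann series argument extends the resolvent bound $C|\lambda-1|^{-\alpha}$ to $|\lambda|\ge 1-c|\lambda-1|^{\alpha}$ near $1$, and beyond $1$ only through the resolvent set. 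With the curve $|\lambda|=1+1/n-c|\theta|^{\alpha}$ for $|\theta|\ge n^{-1/\alpha}$, the far part becomes
\[
\int \theta^{k-\alpha}e^{-cn\theta^{\alpha}}\,d\theta \asymp n^{(\alpha-k-1)/\alpha},
\]
by the substitution $\theta=n^{-1/\alpha}u$. Away from a neighbourhood of $1$, the curve must return to $|\lambda|>1$, since the spectrum may touch $\mathbb{T}$ only at $1$ and we have no resolvent control inside $\mathbb{D}$ elsewhere. The part $|\theta|\ge\theta_0$ would be handled by integrating by parts $k$ times (using $\frac{d}{d\lambda}R=-R^2$), or by applying the near-$1$ estimate to the factor $T^{n/2}$ and a Kreiss bound $\|T^{m}\|=O(m)$ to the other half. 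This splitting step is where I expect the real work to lie, and I would check that the resulting powers of $n$ are absorbed rather than spoiling the exponent.
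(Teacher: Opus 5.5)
Your argument for $k<\alpha-1$ on the circle $|\lambda|=1+\frac1n$ is sound, and deforming the contour inward near $1$ by a Neumann series is the right idea. However, the case $k\ge\alpha-1$ is not proved, and this is exactly the case where the theorem asserts decay. Moreover, the obstacle you describe rests on a false claim. You say that away from $1$ there is no resolvent control inside $\D$, so the curve must return to $|\lambda|>1$. In fact the bound $\|R(\lambda,T)\|\le R_\alpha(T)|\lambda-1|^{-\alpha}$ on $1<|\lambda|<2$ passes by continuity to every $e^{i\theta}$ with $\theta\neq0$. The same Neumann series you use near $1$ then gives $\|R(e^{i\theta}-z,T)\|\le 2R_\alpha(T)|e^{i\theta}-1|^{-\alpha}$ whenever $|z|\le|e^{i\theta}-1|^{\alpha}/(2R_\alpha(T))$. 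This holds at every $\theta\neq0$, not only near $0$, and the admissible radius is bounded below once $|\theta|\ge\theta_0$. Since $1-e^{-c|\theta|^\alpha}\le c|\theta|^\alpha\le c(\pi/2)^\alpha|e^{i\theta}-1|^\alpha$, for small $c$ the whole curve $\Gamma_n(\theta)=(1+\frac1n)e^{i\theta-c|\theta|^\alpha}$, $\theta\in[-\pi,\pi]$, lies in $\rho(T)$ with $\|R(\Gamma_n(\theta),T)\|\le C|\theta|^{-\alpha}$ for $\theta\neq0$. This is Lemma \ref{lemNev}, and it is what the paper uses. On this curve $|\Gamma_n(\theta)|^n\le e\,e^{-cn|\theta|^\alpha}$ for all $\theta$. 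The far part is therefore $\int_{\theta_n}^{\pi}e^{-cn\theta^\alpha}\theta^{k-\alpha}\,d\theta=O(n^{(\alpha-k-1)/\alpha})$ in one stroke, and no separate treatment of $|\theta|\ge\theta_0$ is needed.

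As it stands, the remedies you propose for the arc $|\theta|\ge\theta_0$ of $|\lambda|=1+\frac1n$ are not carried out, and one of them cannot work. Crude bounds give only $O(1)$ on that arc, while the target $n^{1-(k+1)/\alpha}$ tends to $0$.

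\textbf{The $T^{n/2}$ split.} Splitting off a factor estimated by $\|T^m\|=O(m)$ costs a full power of $n$. It closes only if the remaining factor is $O(n^{-(k+1)/\alpha})$, which is stronger than the statement you are trying to prove.

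\textbf{Integration by parts.} Integrating by parts along the arc gains $n^{-1}$ per step. But the first step already leaves boundary terms $\frac{\lambda^{n+1}}{n+1}(1-\lambda)^kR(\lambda,T)$ at endpoints with $|\lambda|=1+\frac1n$. These are only $O(1/n)$, which is insufficient as soon as $k+1>2\alpha$. To rescue this route you would have to integrate by parts along a path whose endpoints lie inside $\D$ near $1$, where you do have the Neumann extension, so that the boundary terms become exponentially small. You would also need to check that the derivatives of $(1-\lambda)^kR(\lambda,T)$ stay bounded along that path. That is workable but roundabout.

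The missing observation is simply that the inward Neumann extension is available along all of $\mathbb{T}\setminus\{1\}$.
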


	The paper is organized as follows.
	
	In Section 2, we show that these estimates \eqref{NevEsti} can be significantly refined by assuming strong Kreiss boundedness and employing moment inequalities. In the Hilbert space setting, we derive a bound analogous to \eqref{seifert} for strongly Kreiss bounded $\alpha$-Ritt operators:
	$$
	\|T^n(I-T)^k\| = O\left( \frac{\log^{\kappa}(n+1)}{n^{k/\alpha}} \right), \quad k \in \N\cup\{0 \}.
	$$
	
	Section 3 is devoted to a complete proof of Theorem \ref{Nevanlinna}. This result serves as a foundational tool for the derivations presented in the subsequent sections.

	In Section 4, we introduce the class of $(\alpha,\beta)$-RK operators, defined by the resolvent estimate
	\begin{equation*}
		\|R(\lambda,T)\| \le \frac{C}{|\lambda-1|^\alpha (|\lambda|-1)^\beta}, \quad 1<|\lambda|<2,
	\end{equation*}
	which unifies Ritt-type ($\beta=0$) and Kreiss-type ($\alpha=0$) conditions. For the parameter range $\alpha>0$, $0 \le \beta < 1$ with $\alpha + \beta > 1$, we establish in Theorem \ref{abRK} a general upper bound for $\norm{T^n(I-T)^k}$ that directly improves and generalizes the estimates from \cite[Corollary 3.3, Case 4]{MahRue}. Specifically, for $k > \alpha - 1$, we prove that
	$$
	\norm{T^n(I-T)^k} = O(n^{\frac{\alpha+(\beta-1)(k+1)}{\alpha}}).
	$$
	
	By setting $\beta = 0$, these results provide, as a special case, refined estimates for $\alpha$-Ritt operators (see Corollary \ref{CorEstalphaRitt}).

	Section 5 focuses on $\beta$-Kreiss operators, i.e., $(0,\beta)$-RK operators. To analyze these operators, we introduce a generalization of classical Ces\`aro boundedness, called $C_\gamma^{(\beta)}$-boundedness, defined by the growth condition $\|S_n^{\gamma}(T)\| = O(n^{\gamma +\beta - 1})$ for $\gamma > 0$ and $\beta \ge 1$, where $S_n^{\gamma}(T) := \sum_{j=0}^n A_{n-j}^{\gamma-1} T^j$ denotes the Ces\`aro sums of order $\gamma$. We establish a characterization of $\beta$-Kreiss operators in terms of the uniform boundedness of these generalized Ces\`aro means for $\gamma > 1$. Next, we introduce the property of uniform $\beta$-Kreiss boundedness and show its equivalence to the uniform $C_1^{(\beta)}$-boundedness of $\lambda T$ for $\lambda \in \mathbb{T}$. For the special class of positive operators acting on Banach lattices, we show that $\beta$-Kreiss boundedness, uniform $\beta$-Kreiss boundedness, and $C_1^{(\beta)}$-boundedness are all equivalent, extending a classical result from the case $\beta=1$. Finally, adapting a construction from \cite{BonMul}, we exhibit an explicit example of a $\beta$-Kreiss operator on a Hilbert space whose powers satisfy the lower bound $\|T^n\| \ge c \, n^{\beta}$ for some constant $c > 0$ and for all $n \ge 1$. This proves that for any $\beta > 1$, the power growth estimate $\|T^n\| = O(n^{\beta})$ is sharp, a behavior that stands in sharp contrast to the classical case $\beta = 1$ which implies $\|T^n\| = o(n)$ in Hilbert spaces.

	Section 6 is dedicated to examples. We first study an example by Nevanlinna \cite[Example 5]{Nev} to show that the estimates in \eqref{NevEsti} are sharp for all $k \in \mathbb{N} \cup \{0\}$. Next, for $m\in \N$, the multiplication operator $M_z$ on a specific Banach space of analytic functions shows that an $m$-Kreiss operator is not necessarily $C_1^{(m)}$-bounded providing also the necessity of the $\log(n+1)$ factor in the estimate $\norm{S_n^{m}(T)} = O(n^{m}\log(n+1))$ for $m$-Kreiss operator. Next, for $m$-Ritt operators, we use matrices from \cite{BorSpi2} to show that the rate $O(n^{m-k-1})$ is sharp when $k \le m-2$. We then use a larger version of the Assani matrix to prove that Cesàro-$m$ boundedness does not imply $m$-Kreiss boundedness. Furthermore, an $\ell^2$-direct sum of finite-dimensional blocks, constructed from the examples of Borovykh and Spijker \cite{BorSpi2}, gives an $(m,1)$-RK operator with the maximal growth rate $O\big(n^m \log(n+1)\big)$. Finally, we generalize the Tomilov--Zem\'anek matrix construction. We prove that for $N\in \N$, an operator $T$ is power bounded if and only if a triangular matrix $\mathcal{T}_{N+1}$ built from $T$ is  Cesàro-$N$ bounded. This provides Cesàro-$N$ bounded operators with power growth $n^N$ such that $\Big(n^{-N}S^{N}_n\big(\mathcal{T}_{N+1}\big)\Big)_{n\in \N}$ converges strongly, while $\Big(n^{-N}S^{N}_n\big(\mathcal{T}_{N+1}\big)\Big)_{n\in \N}$ does not.
	
	Finally, Section 7 concludes the paper by providing a slight generalization of Theorem \ref{Ritt} and discussing several open questions. To place our contributions within the context of the existing literature, the known bounds and the new estimates established in this paper are summarized in Table 1
	\begin{table}[h]
		\centering
		\renewcommand{\arraystretch}{3} 
		\begin{tabular}{|c|c|c|c|c|c|}
			\hline
			&\makecell{ Conditions \\ satisfied by T} & Space &\makecell{ Growth rate of \\ $\norm{T^n(I-T)^k}$} &\makecell{Range of \\ integers $k$} &  Reference \\ \hline
			\multirow{4}{*}{$\alpha \ge 1$ } & \multirow{2}{*}{ \makecell{Power-bounded\\ and $\alpha$-Ritt} } & Banach & $\displaystyle O\Big(\frac{log(n+1)^k}{n^{k/\alpha}}\Big)$ &\multirow{4}{*}{$k \ge 0$  } &\multirow{2}{*}{\cite{Sei}} \\ \cdashline{3-4}
			&  & Hilbert & $\displaystyle O\Big(\frac{1}{n^{k/\alpha}}\Big)$ & &\\ \cline{2-4}\cline{6-6}
			& \makecell{Strongly Kreiss \\ and $\alpha$-Ritt} & Hilbert& $\displaystyle O\Big(\frac{log(n+1)^{\kappa}}{n^{1/\alpha}}\Big)$ & & Section \ref{SecStrKreiss} \\ \cline{2-4}\cline{6-6}
			& \makecell{Kreiss bounded \\and $\alpha$-Ritt}  & Banach & $\displaystyle O\Big(n^{\frac{\alpha-k-1}{\alpha}}\Big)$  &&  \makecell{\cite{Nev}, \\ Section \ref{rateofdecaygeneral}} \\ \hline
			$1\le \beta < \alpha$ & \makecell{$\beta$-Kreiss bounded\\and $\alpha$-Ritt}  & Banach & $\displaystyle O\Big(n^{\beta\frac{\alpha-k-1}{\alpha}}\Big)$  &$k < \alpha - 1$ & Section \ref{SecBetaKreiss}  \\ \hline
			\makecell{$0<\beta <1$}&  \multirow{4}{*}{\makecell{$(\alpha,\beta)$-RK \\ $\alpha+\beta >1$}} &  \multirow{4}{*}{Banach} & $\displaystyle O\Big(n^{\frac{\alpha+(\beta-1)(k+1)}{\alpha}}\Big)$  &$k > \alpha - 1$ &  \multirow{4}{*}{Section \ref{SectAlphaBetaRK}}  \\ \cdashline{1-1}\cdashline{4-5}
			$\beta \ge 0$, $\alpha> 1$&  & & $\displaystyle O\Big(n^{\alpha+\beta - k - 1}\Big)$  &$k < \alpha - 1$ &   \\ \cdashline{1-1}\cdashline{4-5}
			$\beta \ge 0$, $\alpha \in \N$&       &  & $\displaystyle O\big(n^{\beta}\log(n+1)\big)$  &$k = \alpha - 1$ &  \\ \cdashline{1-1}\cdashline{4-5}
			\makecell{$\beta \ge 0$ \\ $\alpha \ge 0, \alpha \notin \N  $}&   &  & $\displaystyle O(n^{\beta})$  &$k = \lfloor\alpha \rfloor$ &  \\ \hline
			$\alpha \ge 1$ & $\alpha$-Ritt  & Banach & $\displaystyle O\Big(n^{\frac{\alpha-k-1}{\alpha}}\Big)$  &$k > \alpha-1$ &  Section \ref{SectAlphaBetaRK}\\ \hline
		\end{tabular}
		\caption{Comparison of growth rates for $\|T^n(I-T)^k\|$}
		\label{tabgrowth_rates}
	\end{table}

	The following notation will be used throughout the paper. $X$ denotes a complex Banach space, and $B(X)$ refers to the algebra of all bounded linear operators on $X$. The identity operator is denoted by $I$. For any $T \in B(X)$, the spectrum of $T$ is denoted by $\sigma(T)$. The resolvent set is defined as $\rho(T) := \mathbb{C} \setminus \sigma(T)$, and for any $\lambda \in \rho(T)$, the resolvent operator is given by 
	\[
	R(\lambda, T) := (\lambda I - T)^{-1}.
	\]
	
	We denote the open unit disk by $\mathbb{D} = \{ z \in \mathbb{C} : |z| < 1 \}$ and its boundary by $\mathbb{T}$. An operator $T$ is said to be power-bounded if there exists a constant $M > 0$ such that $\sup_{n \ge 0} \|T^n\| \le M$.
	
	Regarding asymptotic comparisons, for two sequences of non-negative real numbers $(a_n)_{n \ge 1}$ and $(b_n)_{n \ge 1}$, we use the following notation:
	\begin{itemize}[-]
		\item $a_n = O(b_n)$ means that there exists a constant $C > 0$ such that $a_n \le C b_n$ for all $n \ge 1$.
		\item $a_n = o(b_n)$ means that $\lim_{n \to \infty} \frac{a_n}{b_n} = 0$
	\end{itemize}
	Other notations will be introduced throughout the text as they become necessary.
	
	This section ends with a result (see \cite[Lemma 3.1]{MahRue}) which will be used throughout the article.
	
	\begin{lemma}\label{LemmaIntRes}
		We have
		\[
		\int_{-\pi}^{\pi} \frac{1}{|re^{it}-1|^{\gamma}} \, dt \le 
		\begin{cases}
			C_{\gamma}, & r>1, \; 0<\gamma<1,  \\
			\frac{C_{\gamma}}{(r-1)^{\gamma - 1}}, \; & r>1, \gamma >1. \\

		\end{cases}
		\]
		Moreover,
		\[
		\int_{-\pi}^{\pi} \frac{1}{|re^{it}-1|}\, dt \le 
		C_{1} \log \bigl( (r-1)^{-1} \bigr), \quad 1<r< \frac{3}{2}.
		\]
	\end{lemma}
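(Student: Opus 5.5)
The estimate depends only on $|re^{it}-1|$ as a function of $t$, so I would compare this quantity with an elementary expression and then integrate explicitly. For $r>1$ and $t\in[-\pi,\pi]$ I will use
\[
|re^{it}-1|^2=(r-1)^2+4r\sin^2(t/2)\ \ge\ (r-1)^2+\tfrac{4}{\pi^2}\,t^2 ,
\]
which follows from $r\ge 1$ and $|\sin(t/2)|\ge |t|/\pi$ on $[-\pi,\pi]$. This gives $|re^{it}-1|\ge c\,\max(r-1,|t|)$ with $c=\min(1,2/\pi)/\sqrt2$, say. Each integral is then bounded by a constant times $2\int_0^\pi \max(r-1,t)^{-\gamma}\,dt$.

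\textbf{Case $0<\gamma<1$.} I simply drop $r-1$ and use $\max(r-1,t)\ge t$. This leaves $\int_0^\pi t^{-\gamma}\,dt=\pi^{1-\gamma}/(1-\gamma)<\infty$, a bound that does not depend on $r$.

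\textbf{Case $\gamma>1$.} Put $\delta=r-1$. If $\delta\ge\pi$, the integrand is at most $\delta^{-\gamma}$ on the whole interval, so the integral is at most $\pi\delta^{-\gamma}\le \pi^{2-\gamma}\delta^{1-\gamma}$ because $\delta^{-1}\le\pi^{-1}$. If $\delta<\pi$, I split the integral at $t=\delta$. The piece over $[0,\delta]$ contributes $\delta\cdot\delta^{-\gamma}=\delta^{1-\gamma}$. The piece over $[\delta,\pi]$ contributes
\[
\int_\delta^\pi t^{-\gamma}\,dt\le \frac{\delta^{1-\gamma}}{\gamma-1}.
\]
Both pieces are $O\big((r-1)^{1-\gamma}\big)$, as claimed.

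\textbf{Logarithmic case $\gamma=1$, $1<r<3/2$.} Here $\delta<1/2<\pi$, and the same split gives
\[
\int_0^\delta \delta^{-1}\,dt+\int_\delta^\pi t^{-1}\,dt=1+\log\pi+\log(\delta^{-1}).
\]
Since $\delta<1/2$, we have $\log(\delta^{-1})\ge\log 2>0$. So the constant $1+\log\pi$ can be absorbed, and the whole expression is at most $C_1\log\big((r-1)^{-1}\big)$.

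The only step that needs care is the first one: getting a lower bound on $|re^{it}-1|$ that is uniform in $r>1$, and in particular valid for large $r$. Writing the modulus exactly as above settles this. After that, the remaining work is just tracking constants, and in the logarithmic case checking that the restriction $r<3/2$ keeps $\log\big((r-1)^{-1}\big)$ bounded below by a positive constant.
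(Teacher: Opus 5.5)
Your proof is correct. The paper gives no argument of its own for this lemma; it simply quotes \cite[Lemma 3.1]{MahRue}. Your proof therefore serves as a self-contained replacement rather than a variant of something in the text.

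The key reduction is right. For $r\ge 1$ and $|t|\le\pi$ you have $|re^{it}-1|^2=(r-1)^2+4r\sin^2(t/2)\ge (r-1)^2+\frac{4}{\pi^2}t^2$, which turns every case into the model integral $\int_0^\pi \max(r-1,t)^{-\gamma}\,dt$. This bound is uniform in $r>1$, which matters because the cases $\gamma\neq 1$ are asserted for all $r>1$, not just near $1$. Your split at $t=r-1$ then produces the three regimes (bounded, $(r-1)^{1-\gamma}$, logarithmic) directly. You also correctly use $r<3/2$ to absorb the additive constant, since then $\log((r-1)^{-1})\ge\log 2>0$.

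There is one small slip, in the sub-case $\delta=r-1\ge\pi$ for $\gamma>1$. From $\delta^{-1}\le\pi^{-1}$ you get $\pi\delta^{-\gamma}=(\pi/\delta)\,\delta^{1-\gamma}\le\delta^{1-\gamma}$, not $\pi^{2-\gamma}\delta^{1-\gamma}$. The latter is false when $\gamma>2$: for $\gamma=3$ and $\delta=\pi$ it would say $\pi^{-2}\le\pi^{-3}$. Replacing the constant $\pi^{2-\gamma}$ by $1$ repairs this and does not affect the conclusion.

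A minor point: your constant $c$ can be taken to be $2/\pi$, since $a^2+b^2\ge\max(a,b)^2$. The extra factor $1/\sqrt{2}$ is unnecessary, though a smaller $c$ is harmless.
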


	
	\section{$\alpha$-Ritt and strongly Kreiss bounded operators in Hilbert spaces}\label{HilbertKreissbounded}\label{SecStrKreiss}
	
	This section is devoted to establishing a sharp upper bound for the growth of the sequences $\|T^n(I-T)^k\|$ when $T$ is an $\alpha$-Ritt operator on a Hilbert space. We show that under the stronger assumption of strong Kreiss boundedness, the classical polynomial decay can be balanced against a  logarithmic growth factor.

	The following lemma establishes the control of the orbit sums for strongly Kreiss bounded operators in Hilbert spaces (see \cite[Lemma 4.8]{CCEL}).
	
	\begin{lemma}\label{cunyeisnlinco}
		Let $T$ be a strongly Kreiss bounded operator on a Hilbert space $H$. Then there exist constants $C,\kappa >0$ such that for every $N\in \mathbb{N}$,
		$$
		\sum_{n=1}^N \|T^n x\| \le C N \log(N+1)^{\kappa}
		\quad \text{and} \quad
		\sum_{n=1}^N \|(T^*)^n x\| \le C N \log(N+1)^{\kappa}.
		$$
	\end{lemma}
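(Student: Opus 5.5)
We will prove the statement by integrating the resolvent against powers of $\lambda$ on a circle and combining this with a Hilbert-space square-function estimate. The latter is supplied by Parseval's identity, and strong Kreiss boundedness controls it. We concentrate on the first inequality; the second follows by the same argument applied to $T^*$, which is again strongly Kreiss bounded with the same constant, since $\|R(\bar\lambda,T^*)^n\| = \|R(\lambda,T)^n\|$.

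\textbf{Step 1: an $\ell^2$ bound on orbits.} Fix $x\in H$ and $r=1+1/N$. Since $R(\lambda,T)=\sum_{n\ge0}\lambda^{-n-1}T^n$ for $|\lambda|>1$, Parseval gives
\[
\frac{1}{2\pi}\int_{-\pi}^{\pi}\|R(re^{it},T)x\|^2\,dt=\sum_{n\ge0}r^{-2n-2}\|T^nx\|^2 .
\]
The plain Kreiss bound only yields $\sum_{n\le N}\|T^nx\|^2\lesssim N^2\|x\|^2$, which via Cauchy--Schwarz gives $N^{3/2}$. This is too weak, so we use strong Kreiss boundedness instead. The key known consequence, the McCarthy/Nevanlinna-type inequality, is that strong Kreiss boundedness is equivalent to a uniform bound on $\|e^{zT}\|\le Ce^{|z|}$ for all $z\in\mathbb{C}$. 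From $\|e^{zT}\|\le Ce^{|z|}$ on the circle $|z|=N$, Parseval in $H$ gives
\[
\sum_{n}\frac{N^{2n}}{(n!)^2}\|T^nx\|^2=\frac1{2\pi}\int\|e^{Ne^{it}T}x\|^2dt\le C^2e^{2N}\|x\|^2 .
\]
By Stirling, the weights $N^{2n}e^{-2N}/(n!)^2$ are comparable to $N^{-1}$ on the window $|n-N|\le\sqrt N$. Hence $\sum_{|n-N|\le\sqrt N}\|T^nx\|^2\lesssim N\|x\|^2$, and Cauchy--Schwarz yields $\sum_{|n-N|\le \sqrt N}\|T^nx\|\lesssim N^{1/2}\cdot N^{1/4}\cdot$, i.e.\ an average bound of order one on windows of length $\sqrt N$.

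\textbf{Step 2: summing over windows, with a logarithmic loss.} Cover $[1,N]$ by dyadic blocks $[2^j,2^{j+1}]$, and cover each block by $\sim 2^{j/2}$ windows of length $2^{j/2}$ centred at points $m$. On each window the estimate of Step 1 with $N$ replaced by $m$ gives a sum of $\|T^nx\|$ that is $O(m^{1/2})\|x\|$. Summing over the windows of a block gives $O(2^j)\|x\|$, and summing over the blocks gives $O(N)$, up to the loss incurred in the Stirling comparisons and in the tails. Where the tails are not negligible, we instead combine the estimate with the growth bound $\|T^n\|=O(\log^\kappa(n+1))$ quoted in the introduction for strongly Kreiss bounded operators on Hilbert spaces (\cite{CCEL}). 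This gives $\sum_{n\le N}\|T^nx\|\le \sum_{n\le N} C\log^{\kappa}(n+1)\|x\|\le CN\log^\kappa(N+1)\|x\|$ directly. This last observation already proves the lemma (with $\|x\|$ on the right, the statement being homogeneous in $x$).

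\textbf{Plan and main obstacle.} The simplest route is therefore to invoke the Hilbert-space power bound $\|T^n\|=O(\log^\kappa(n+1))$ and sum it; Steps 1--2 serve only as the self-contained justification of that bound. The main obstacle lies in Step 1, namely passing from the exponential bound $\|e^{zT}\|\le Ce^{|z|}$ to $\ell^2$ control of $\|T^nx\|$ over windows without losing more than a power of $\log$. Turning these window bounds into a pointwise bound on $\|T^n\|$ requires the duality pairing $\langle T^{2m}x,y\rangle=\langle T^mx,(T^*)^my\rangle$ together with the same square-function bound for $T^*$. This is where both halves of the statement are used, and it is where the exponent $\kappa$ arises.
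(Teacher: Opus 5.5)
There is a genuine gap: the argument you settle on is circular, and the part meant to avoid the circularity does not work. The route you rely on, summing $\|T^n\|=O(\log^{\kappa}(n+1))$, assumes a statement essentially equivalent to the lemma. In the paper that bound is Proposition~\ref{PropEstiTnSK}, and it is derived \emph{from} this very lemma. The derivation goes through $N\langle T^Nx,y\rangle=\sum_{n=1}^N\langle T^nx,(T^*)^{N-n}y\rangle$ and Cauchy--Schwarz, and needs the orbit estimate for both $T$ and $T^*$. (The paper does not prove the lemma; it imports it from \cite[Lemma 4.8]{CCEL}.) Your last paragraph concedes exactly this dependence. So everything hinges on Steps 1--2 proving the orbit estimate independently.

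They do not. The Parseval identity on $|z|=N$ and the Stirling comparison are fine, and they give $\sum_{|n-N|\le\sqrt N}\|T^nx\|^2\lesssim N\|x\|^2$. But Cauchy--Schwarz over a window of about $\sqrt N$ terms then yields $\sum_{|n-N|\le\sqrt N}\|T^nx\|\lesssim N^{1/4}\cdot N^{1/2}\|x\|=N^{3/4}\|x\|$. That is an average of order $N^{1/4}$, not $O(1)$. Your per-window bound $O(m^{1/2})$ in Step 2 amounts to bounding the $\ell^1$ norm on the window by its $\ell^2$ norm, which is the wrong direction. With correct bookkeeping the dyadic summation gives only $\sum_{n\le N}\|T^nx\|^2\lesssim N^{3/2}\|x\|^2$. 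Inserted into the duality pairing, this just returns the general Banach-space bound $\|T^N\|=O(N^{1/2})$.

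No cleverer summation can fix this. The whole family $\sum_n \frac{t^{2n}}{(n!)^2}\|T^nx\|^2\le C^2e^{2t}\|x\|^2$, $t>0$, is satisfied by a sequence with $\|T^nx\|^2\asymp\sqrt{n+1}$. Indeed, with the Poisson weights $p_t(n)=e^{-t}t^n/n!$ one has $\sum_n p_t(n)^2\asymp t^{-1/2}$ for large $t$, so $\sum_n p_t(n)^2\sqrt{n+1}$ stays bounded. What is missing is a genuinely Hilbertian square-function estimate that beats single-circle Parseval by a factor $N^{1/2}$, namely $\sum_{n\le N}\|T^nx\|^2\le CN\log^{\kappa}(N+1)\|x\|^2$ for $T$ and $T^*$. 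This squared form is what the paper actually uses in Proposition~\ref{PropEstiTnSK} and Theorem~\ref{MainTheorem}. It is precisely the content of \cite[Lemma 4.8]{CCEL}, and your proposal does not supply it.
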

	
	As a direct consequence of Lemma \ref{cunyeisnlinco}, we can bound  $\norm{T^N}$. Indeed, for any unit vectors $x, y \in H$, we use the identity $N\langle T^N x,y \rangle = \sum_{n=1}^N \langle T^n x,(T^*)^{N-n}y \rangle$. The Cauchy–Schwarz inequality, together with Lemma \ref{cunyeisnlinco}, implies
	$$
	|\langle T^N x,y \rangle| \le \frac{1}{N} \Big(\sum_{n=1}^N \|T^n x\|^2 \Big)^{1/2} \Big(\sum_{n=1}^N \|(T^*)^n y\|^2 \Big)^{1/2} \le C \log(N+1)^{\kappa}.
	$$
	Taking the supremum over $\|x\| = \|y\| = 1$, we establish the following proposition.
	
	\begin{prop}\label{PropEstiTnSK}
		Let $T$ be a strongly Kreiss bounded operator on a Hilbert space $H$. Then there exist constants $C,\kappa >0$ such that for every $N\in \mathbb{N}$,
		$$
		\norm{T^N} \le C\log(N+1)^{\kappa}.
		$$
	\end{prop}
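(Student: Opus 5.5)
The plan follows the argument sketched just before the statement: a duality identity, Cauchy--Schwarz, and then Lemma \ref{cunyeisnlinco}. Fix $N\in\mathbb{N}$ and unit vectors $x,y\in H$.

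\emph{Step 1 (duality identity).} Since $\langle T^N x,y\rangle=\langle T^n x,(T^*)^{N-n}y\rangle$ for each $0\le n\le N$, summing over $n=1,\dots,N$ gives
$$
N\langle T^Nx,y\rangle=\sum_{n=1}^N\langle T^nx,(T^*)^{N-n}y\rangle .
$$

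\emph{Step 2 (Cauchy--Schwarz).} Applying Cauchy--Schwarz first in $H$ and then in $\ell^2$, and reindexing $m=N-n$, yields
$$
N|\langle T^Nx,y\rangle|\le\Big(\sum_{n=1}^N\|T^nx\|^2\Big)^{1/2}\Big(\sum_{m=0}^{N-1}\|(T^*)^my\|^2\Big)^{1/2}.
$$
The term $m=0$ contributes only $\|y\|^2=1$, which is harmless.

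\emph{Step 3 (quadratic orbit bound).} It then suffices to know that $\sum_{n=1}^N\|T^nx\|^2\le CN\log(N+1)^{\kappa}\|x\|^2$, and the same for $T^*$; note that $T^*$ is again strongly Kreiss bounded. With this, the right-hand side is at most $CN\log(N+1)^{\kappa}$. Dividing by $N$ and taking the supremum over $\|x\|=\|y\|=1$ gives the claim.

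The main obstacle is that Lemma \ref{cunyeisnlinco}, as stated, controls $\sum\|T^nx\|$ rather than $\sum\|T^nx\|^2$. Cauchy--Schwarz only bounds the linear sum by the quadratic one, so the needed direction does not follow. Bounding $\sum\|T^nx\|^2\le\max_n\|T^n\|\sum\|T^nx\|$ with the general estimate $\|T^n\|=O(n^{1/2})$ loses a factor $N^{1/2}$ and is useless.

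My first attempt is to use the quadratic form that the Hilbert-space argument of \cite[Lemma 4.8]{CCEL} naturally produces, via Parseval. For $r>1$,
$$
\sum_{n\ge0}r^{-2(n+1)}\|T^nx\|^2=\frac1{2\pi}\int_{-\pi}^{\pi}\|R(re^{it},T)x\|^2\,dt .
$$
I would then exploit strong Kreiss boundedness: the estimates on $R(\lambda,T)^k$ for all $k$ give uniform control of $e^{zT}$-type expressions, and through them of the $L^2$ integral of the resolvent. The target is a bound of the form $\frac{C}{r-1}\log\big((r-1)^{-1}\big)^{\kappa}\|x\|^2$ for this integral. Choosing $r=1+1/N$ then bounds $\sum_{n\le N}\|T^nx\|^2$ by $CN\log(N+1)^{\kappa}\|x\|^2$, since $r^{-2n}\ge e^{-2}$ for $n\le N$.

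If that route is too heavy, the fallback is to read Lemma \ref{cunyeisnlinco} in its quadratic form $\sum_{n=1}^N\|T^nx\|^2\le CN\log(N+1)^{\kappa}\|x\|^2$, which is what the sketch preceding the proposition actually uses. Either way, the whole difficulty of the proof sits in this step; Steps 1 and 2 are routine.
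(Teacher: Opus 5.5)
Your Steps 1 and 2, together with your fallback reading of Step 3, are exactly the paper's argument. The paper's own display also feeds the \emph{squared} sums $\sum_{n\le N}\|T^nx\|^2$ into Lemma \ref{cunyeisnlinco}, even though that lemma only controls $\sum_{n\le N}\|T^nx\|$. So the mismatch you noticed is real, but your proposal does not close it, and as written Step 3 is a gap:
\begin{itemize}
\item Route (a) only restates the goal. The $L^2$-resolvent bound you aim for is essentially equivalent to the squared orbit bound. Moreover, the direct Parseval estimates with $R(\lambda,T)^k$ or $e^{zT}$ only bound averages of $\|T^nx\|^2$ over windows of length $\sqrt N$ near $N$ by $O(\sqrt N)$, which leads back to the $O(\sqrt N)$ bound.
\item Route (b) assumes more than the lemma states.
\end{itemize}

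The missing idea is that the unsquared bound already suffices if you replace Cauchy--Schwarz by choosing one good index. For unit vectors $x,y$ and every $1\le n\le N$,
$$
|\langle T^Nx,y\rangle|\le \|T^nx\|\,\|(T^*)^{N-n}y\|\le \tfrac14\big(\|T^nx\|+\|(T^*)^{N-n}y\|\big)^2 .
$$
A minimum is at most an average. Apply Lemma \ref{cunyeisnlinco} to $x$, and to $y$ after reindexing $m=N-n$ (the term $m=0$ contributes $1$). This gives
$$
|\langle T^Nx,y\rangle|\le \frac14\Big(\frac1N\sum_{n=1}^N\|T^nx\|+\frac1N\sum_{m=0}^{N-1}\|(T^*)^my\|\Big)^2\le \frac14\big(1+2C\log(N+1)^{\kappa}\big)^2 .
$$
Since $\log(N+1)\ge\log 2$, this yields $\|T^N\|\le C'\log(N+1)^{2\kappa}$. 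That is the proposition with $\kappa$ replaced by $2\kappa$, which is harmless because $\kappa$ is unspecified. This argument uses only the lemma as stated, so it also repairs the paper's sketch.
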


	In the sequel, we will need to deal with fractional powers of $(I-T)$. Recall that when $I-T$ is sectorial, the operator $(I-T)^{\alpha}$ is defined via the standard sectorial functional calculus (see \cite{Haase2006} for further details).
	
	For our purposes, alongside Lemma \ref{cunyeisnlinco}, we will require the following result, which follows directly from the moment inequality (see \cite[Corollary 7.2]{Haa}).
	
	\begin{lemma}\label{momentinequality}
		Let $T \in B(X)$ be such that $I-T$ is sectorial. Then for $\alpha > 1$ and $k \in \N$,
		$$
		\|T^n(I-T)^k\|
		\le C\, \|T^n\|^{\frac{\alpha-1}{\alpha}}
		\|T^n(I-T)^{k\alpha}\|^{\frac{1}{\alpha}},
		$$
		where $C$ depends only on $\alpha$ and $k$.
	\end{lemma}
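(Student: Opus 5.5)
We apply the moment inequality for sectorial operators to $A := I-T$, with the operator $T^n$ inserted in front. The statement "follows directly from the moment inequality", so the plan is to pass from the fractional-power version to the one with $T^n$ attached.

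\textbf{Step 1 (moment inequality).} Recall the moment inequality \cite[Corollary 7.2]{Haa} for a sectorial operator $A$ and exponents $0<\gamma<\delta$:
$$
\|A^{\gamma}x\| \le C_{\gamma,\delta}\,\|x\|^{1-\gamma/\delta}\,\|A^{\delta}x\|^{\gamma/\delta}, \qquad x\in D(A^{\delta}).
$$
Here $A=I-T$ is bounded, so every $x\in X$ lies in $D(A^\delta)$. Take $\gamma=k$ and $\delta=k\alpha$; then $\gamma/\delta = 1/\alpha$ and $1-\gamma/\delta=(\alpha-1)/\alpha$. We want these exponents to sum to $1$, which makes the resulting estimate homogeneous of degree one.

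\textbf{Step 2 (inserting $T^n$).} Apply this inequality to the vector $T^n x$. Since $T^n$ commutes with $I-T$, it commutes with every fractional power $(I-T)^{s}$ given by the sectorial functional calculus: $T^n$ commutes with the resolvents of $A$, and $(I-T)^s$ is built from these by Cauchy integrals and regularization. This gives
$$
\|T^n(I-T)^k x\| \le C\,\|T^n x\|^{\frac{\alpha-1}{\alpha}}\,\|T^n(I-T)^{k\alpha}x\|^{\frac1\alpha}
\le C\,\|T^n\|^{\frac{\alpha-1}{\alpha}}\,\|T^n(I-T)^{k\alpha}\|^{\frac1\alpha}\,\|x\|.
$$
Taking the supremum over $\|x\|\le 1$ yields the claim. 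The constant $C=C_{k,k\alpha}$ depends only on $k$, $\alpha$, and the sectoriality constants of $A$, which we regard as fixed.

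\textbf{Main obstacle.} The only delicate point is checking that the moment inequality applies with integer $\gamma=k$ and real $\delta=k\alpha$. We must make sure that $(I-T)^k$ from the functional calculus coincides with the algebraic power; this holds because the calculus is multiplicative and consistent on $D(A^\delta)$. We also need the commutation of $T^n$ with $(I-T)^{k\alpha}$ justified above.

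If one prefers to avoid the general $\gamma$ version, an alternative route is to first prove the case $\gamma=1$ and then reduce to it. The case $\gamma=1$ reads $\|Ay\| \le C\|y\|^{1-1/\delta}\|A^{\delta}y\|^{1/\delta}$ with $\delta=\alpha$. Applying it to the sectorial operator $A^k$ (which is sectorial for $A$ bounded sectorial of angle less than $\pi/k$, or via the composition rule $(A^k)^\alpha=A^{k\alpha}$) gives the same result.
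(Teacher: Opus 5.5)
Your proof is correct and follows the paper's route. The paper only says the lemma ``follows directly from the moment inequality'' \cite[Corollary 7.2]{Haa}, and your argument fills in exactly that step: apply the moment inequality for the sectorial operator $I-T$ with exponents $k<k\alpha$ to the vector $T^n x$, use that $T^n$ commutes with the fractional powers, and take the supremum over $\|x\|\le 1$.

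You are right that the constant also depends on the sectoriality constants of $I-T$; the paper's ``depends only on $\alpha$ and $k$'' tacitly treats $T$ as fixed. Your fallback route through $A^k$ would need $A$ to have angle $<\pi/k$, which need not hold, but your main argument does not rely on it.
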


	Before stating the main result of this section, we recall the definition of $\alpha$-Ritt operators
	
	\begin{df}\label{defAlphaRitt}
		We say that $T \in B(X)$ is an $\alpha$-Ritt operator for $\alpha >1$ if $\sigma(T) \subset \overline{\mathbb{D}}$ and there exists a constant $C > 0$ such that
		\begin{equation}\label{alphaRittcondition}
			\norm{R(\lambda,T)} \le \frac{C}{|\lambda-1|^{\alpha}}, \quad 1 < |\lambda| < 2.
		\end{equation}
		The smallest constant $C$ satisfying \eqref{alphaRittcondition} is denoted by $R_{\alpha}(T)$.
	\end{df}
	
	\begin{thm}\label{MainTheorem}
		Let $T$ be an $\alpha$-Ritt operator on a Hilbert space $H$.
		If $T$ is strongly Kreiss bounded, then there exists $\kappa > 0$ such that for each $k \in \N$,
		$$
		\|T^n(I-T)^k\|
		= O\Big( \frac{\log^{\kappa}(n+1)}{n^{k/\alpha}} \Big).
		$$
	\end{thm}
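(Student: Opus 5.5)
Combine Lemma \ref{momentinequality} with Proposition \ref{PropEstiTnSK}, Theorem \ref{Nevanlinna} being the source of decay for the high power of $I-T$. First I check that $I-T$ is sectorial, so that the moment inequality applies. Strong Kreiss boundedness gives $\|R(\lambda,T)\|\le C/(|\lambda|-1)$ for $|\lambda|>1$. Together with the $\alpha$-Ritt estimate near $1$ and boundedness of the resolvent away from $\overline{\D}$, this yields $\|\mu(\mu+I-T)^{-1}\|\le C$ for $\mu$ in the right half-plane. Indeed, writing $\lambda=1+\mu$ with $\operatorname{Re}\mu>0$ gives $|\lambda|-1\gtrsim \operatorname{Re}\mu$, which handles the region close to the real axis. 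The extension to a sector of angle less than $\pi/2$ is the delicate point; I expect it to be where the paper either invokes a standard argument or simply assumes sectoriality. As a fallback I would try to use only half-plane type bounds, which suffice for the functional calculus of $(I-T)^{s}$ with $s>0$ as used in \cite{Haa}.

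Given sectoriality, I apply Lemma \ref{momentinequality} with the given $\alpha$ and $k$:
$$\|T^n(I-T)^k\|\le C\|T^n\|^{\frac{\alpha-1}{\alpha}}\,\|T^n(I-T)^{k\alpha}\|^{\frac1\alpha}.$$
Proposition \ref{PropEstiTnSK} bounds the first factor by $C\log^{\kappa}(n+1)^{(\alpha-1)/\alpha}$. For the second factor, the natural goal is $\|T^n(I-T)^{k\alpha}\|=O(n^{-k}\log^{\kappa'}(n+1))$. Taking the $1/\alpha$ power gives $n^{-k/\alpha}$ times a log power, and renaming $\kappa$ finishes the proof.

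To get $\|T^n(I-T)^{k\alpha}\|$, I split $n=n_1+\dots+n_{\ell}$ into roughly equal pieces. The difficulty is that Theorem \ref{Nevanlinna} only gives $n^{(\alpha-m-1)/\alpha}$ decay for $\|T^n(I-T)^m\|$, which is a loss of one power compared with the Ritt-type rate. So I would instead use the decomposition
$$T^n(I-T)^{k\alpha}=T^{n/2}\cdot T^{n/2}(I-T)^{k\alpha}$$
and exploit the Hilbert-space orbit bound from Lemma \ref{cunyeisnlinco}. In the Cauchy--Schwarz identity used before Proposition \ref{PropEstiTnSK}, replace $T^N$ by $T^N(I-T)^{m}$ with $m$ large. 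This gives
$$N\langle T^N(I-T)^{m}x,y\rangle=\sum_{j}\langle T^j(I-T)^{m}x,(T^*)^{N-j}y\rangle.$$
The orbit sums of $T^j(I-T)^m x$ are then controlled by $N\log^{\kappa}$ times the Nevanlinna decay $\sup_{j\ge N/2}\|T^{j}(I-T)^m\|$. Iterating this argument $k$ times in the power of $I-T$, each iteration should gain a factor $N^{-1/\alpha}$ at the cost of a log. This is the step I expect to be the main obstacle: arranging the iteration so that only logarithmic losses accumulate. A simpler alternative, if it works, is to choose the exponent in the moment inequality as large as $M\alpha$ with $M\gg k$. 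Then
$$\|T^n(I-T)^k\|\le C\|T^n\|^{1-\frac{k}{M}}\,\|T^n(I-T)^{M}\|^{\frac{k}{M}}$$
(interpolating between $0$ and $M$). Theorem \ref{Nevanlinna} gives $\|T^n(I-T)^M\|=O(n^{(\alpha-M-1)/\alpha})$, so the bound is $n^{\frac{k}{M}\cdot\frac{\alpha-M-1}{\alpha}}=n^{-\frac{k}{\alpha}}\,n^{\frac{k(\alpha-1)}{M\alpha}}$. This comes within $n^{\varepsilon}$ of the target. Removing the residual $n^{\varepsilon}$ requires the Hilbert-space averaging argument above, applied to sharpen the high-order estimate $\|T^n(I-T)^M\|$ to $O(n^{-M/\alpha}\log^{\kappa}(n+1))$.
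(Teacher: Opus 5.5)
\textbf{There is a genuine gap: the key estimate is never proved.} Your reduction is the right one. Lemma \ref{momentinequality} with exponent $\alpha$, Proposition \ref{PropEstiTnSK} for $\|T^n\|$, and the target $\|T^n(I-T)^{k\alpha}\|=O(n^{-k}\log^{\kappa}(n+1))$ are exactly the paper's last step. But that target bound is the whole content of the theorem, and nothing in your proposal proves it.

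\begin{itemize}
\item Your fallback (large exponent $M$ plus Theorem \ref{Nevanlinna}) gives only $O\big(n^{-k/\alpha+k(\alpha-1)/(M\alpha)}\log^{\kappa}(n+1)\big)$. This is strictly weaker than the claim for every $M$.
\item The Cauchy--Schwarz ``iteration'' gains nothing. Controlling $\sum_j\|T^j(I-T)^mx\|$ through the orbit of $T^{N/2}(I-T)^mx$ only returns $\|T^N(I-T)^m\|\lesssim\log^{2\kappa}(N+1)\,\|T^{N/2}(I-T)^m\|$.
\item Likewise, inserting termwise bounds $\|T^j(I-T)^m\|\lesssim j^{-a}$ into the unweighted identity $N\langle T^N(I-T)^mx,y\rangle=\sum_{j}\langle T^j(I-T)^mx,(T^*)^{N-j}y\rangle$ gives back at best $N^{-a}$ up to logarithms. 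It merely averages the rate you feed in.
\item Splittings $T^n(I-T)^m=T^{n_1}(I-T)^{m_1}\,T^{n_2}(I-T)^{m_2}$ only add up the losses $\frac{\alpha-1}{\alpha}$ of Theorem \ref{Nevanlinna}.
\end{itemize}
So there is no mechanism in your proposal that produces the announced factor $N^{-1/\alpha}$ per step.

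\textbf{The missing idea is a Hilbert-space square-function estimate.} Theorem \ref{Nevanlinna} is not used at all in the paper's proof. For $\rho>1$, the function $\varphi_{N,k}(\lambda)=\lambda^kR(\lambda,T)^k\sum_{n=0}^NT^n\lambda^{-n}$ has coefficient $\binom{n+k}{k}T^n$ at $\lambda^{-n}$ for $n\le N$. Apply Parseval on $|\lambda|=\rho$ to $(I-T)^{k\alpha}x$, use the uniform bound $\|(I-T)^{\alpha}R(\lambda,T)\|\le M$ for $1<|\lambda|<2$ (Lemma \ref{lemmaalpha}; this is where the $\alpha$-Ritt condition enters), and let $\rho\to1^+$. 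This gives
\[
\sum_{n=0}^N\binom{n+k}{k}^2\|T^n(I-T)^{k\alpha}x\|^2\le C\sum_{n=0}^N\|T^nx\|^2 .
\]
Next, use the \emph{weighted} identity $\binom{N+k+1}{k+1}\langle T^N(I-T)^{k\alpha}x,y\rangle=\sum_{n=0}^N\binom{n+k}{k}\langle T^n(I-T)^{k\alpha}x,(T^*)^{N-n}y\rangle$. Cauchy--Schwarz and Lemma \ref{cunyeisnlinco} then yield $\|T^N(I-T)^{k\alpha}\|\le CN\log^{\kappa}(N+1)/\binom{N+k+1}{k+1}=O(N^{-k}\log^{\kappa}(N+1))$ in one stroke. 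The weights now sum to order $N^{k+1}$ rather than $N$, and the square-function bound is exactly what makes these weights affordable.

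\textbf{A smaller point on sectoriality: your discussion has the regions swapped.}
\begin{itemize}
\item A bound on $\|\mu(\mu+I-T)^{-1}\|$ over the whole right half-plane is false in general. Near $1$, with $|\lambda|-1$ small compared with $|\lambda-1|$, neither $C/(|\lambda|-1)$ nor $C/|\lambda-1|^{\alpha}$ controls $|\lambda-1|\,\|R(\lambda,T)\|$. The operator of Section \ref{SectionExempleNev} is a counterexample.
\item What holds is the bound on each subsector $|\arg\mu|\le\frac{\pi}{2}-\varepsilon$. There $|1+\mu|-1\ge c_\varepsilon\min(|\mu|,1)$, so Kreiss boundedness alone gives it.
\end{itemize}
This sectoriality of angle $\pi/2$ is all that Lemmas \ref{momentinequality} and \ref{lemmaalpha} require.
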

	
	We begin with a preliminary lemma, the proof of which can be found in \cite[Lemma 3.9]{Sei}.
	
	\begin{lemma}\label{lemmaalpha}
		Let $\alpha > 1$ and let $T \in B(X)$ be an $\alpha$-Ritt operator such that $I-T$ is sectorial. Then there exists $M > 0$, depending on $R_{\alpha}(T)$ such that
		$$
		\|(I-T)^{\alpha}R(\lambda, T)\| \le M,
		\quad 1 < |\lambda| < 2.
		$$
	\end{lemma}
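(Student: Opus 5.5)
The plan is to set $A := I-T$, which is sectorial by hypothesis, and $\mu := \lambda-1$. Then $\lambda I - T = \mu I + A$, so $R(\lambda,T) = (\mu+A)^{-1}$. For $1<|\lambda|<2$ we have $0<|\mu|<3$, and the $\alpha$-Ritt condition \eqref{alphaRittcondition} reads $\|(\mu+A)^{-1}\| \le R_\alpha(T)|\mu|^{-\alpha}$. The goal is to bound $A^\alpha(\mu+A)^{-1}$ uniformly in this range of $\mu$. I will write $\alpha = m+\theta$ with $m=\lfloor \alpha\rfloor \ge 1$ and $\theta\in[0,1)$, and use the additivity $A^\alpha = A^\theta A^m$ of the sectorial calculus.

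\emph{Step 1 (integer part).} From $A(\mu+A)^{-1} = I - \mu(\mu+A)^{-1}$, induction gives
\[
A^m(\mu+A)^{-1} = \sum_{j=0}^{m-1} (-\mu)^j A^{m-1-j} + (-\mu)^m(\mu+A)^{-1}.
\]
The finite sum is bounded uniformly, since $|\mu|<3$ and $A$ is bounded. If $\theta=0$, the last term has norm at most $R_\alpha(T)|\mu|^{m-\alpha} = R_\alpha(T)$, and we are done.

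\emph{Step 2 (fractional part, $\theta\in(0,1)$).} Apply $A^\theta$ to the identity of Step 1. Each term $A^\theta A^{m-1-j}$ is a fixed bounded operator multiplied by a bounded power of $\mu$, so it remains to control $|\mu|^m\|A^\theta(\mu+A)^{-1}\|$. For this I use the moment inequality for sectorial operators (the same tool behind Lemma \ref{momentinequality}, \cite[Corollary 7.2]{Haa}):
\[
\|A^\theta x\| \le C\|x\|^{1-\theta}\|Ax\|^\theta .
\]
Applied to $x = (\mu+A)^{-1}y$, this needs two inputs:
\[
\|(\mu+A)^{-1}\| \le C|\mu|^{-\alpha}
\quad\text{and}\quad
\|A(\mu+A)^{-1}\| = \|I-\mu(\mu+A)^{-1}\| \le 1 + C|\mu|^{1-\alpha} \le C'|\mu|^{1-\alpha},
\]
where the last inequality uses $|\mu|<3$ and $\alpha>1$. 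Combining,
\[
|\mu|^m\|A^\theta(\mu+A)^{-1}\| \le C|\mu|^{m-\alpha(1-\theta)+\theta(1-\alpha)} = C|\mu|^{m+\theta-\alpha} = C.
\]

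\emph{Main obstacle.} The delicate point is Step 2: the fractional power cannot simply be split off as a bounded factor. Instead, the interpolation exponents must balance exactly so that the singularity $|\mu|^{-\alpha}$ of the resolvent is cancelled. One also has to check that the moment inequality applies in this form to bounded sectorial $A$, with $A^\theta$ commuting with the resolvent, and that all constants depend only on $\alpha$, $R_\alpha(T)$, $\|A\|$ and the sectoriality constant. Collecting the bounds from Steps 1 and 2 then yields the uniform constant $M$.
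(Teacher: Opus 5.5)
Your proof is correct. Note that the paper gives no argument for this lemma: it defers to \cite[Lemma 3.9]{Sei} and only remarks that power-boundedness is not used there. Your proof is therefore a self-contained substitute rather than a variant of an in-paper proof.

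It confirms that remark directly. The only inputs are the $\alpha$-Ritt bound and the sectoriality of $A=I-T$, which enters through the boundedness of $A^\theta$ and the moment inequality $\|A^\theta x\|\le C\|x\|^{1-\theta}\|Ax\|^\theta$. For integer $\alpha$, your Step 1 shows that sectoriality is not needed at all.

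You identified the delicate point in Step 2 correctly: the polynomial part must be split off before interpolating. Interpolating $A^{\alpha}R(\lambda,T)$ directly between $A^mR(\lambda,T)$ and $A^{m+1}R(\lambda,T)$ would fail. The first has norm only $\lesssim|\mu|^{-\theta}$, and the second has norm $\lesssim 1$ because the bounded polynomial part dominates there. Together these give only $|\mu|^{-\theta(1-\theta)}$, which is unbounded. You instead interpolate the singular term $A^\theta R(\lambda,T)$ between $R(\lambda,T)$ and $AR(\lambda,T)$. There both bounds scale exactly, like $|\mu|^{-\alpha}$ and $|\mu|^{1-\alpha}$ respectively; this is where $\alpha>1$ and $|\mu|<3$ are used. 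That exact scaling is what makes the exponents cancel against the factor $|\mu|^m$.

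Your argument also makes the dependence of $M$ explicit. It depends on $\alpha$, $R_\alpha(T)$, $\|I-T\|$, and the sectoriality constant of $I-T$, so the phrase ``depending on $R_{\alpha}(T)$'' in the statement should be read loosely. The dependence on $\|I-T\|$ is harmless: a Cauchy integral over $|\lambda|=3/2$ gives $\|T\|\le \frac94\,2^{\alpha}R_\alpha(T)$.
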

	\begin{remark}
		In \cite{Sei}, the proof of Lemma \ref{lemmaalpha} is originally provided under the assumption that $T$ is power-bounded. However, the condition of power-boundedness is not used in the proof.
	\end{remark}
	
	We are now ready to prove Theorem \ref{MainTheorem}.
	
	\begin{proof}[Proof of Theorem \ref{MainTheorem}]
		Let $k \in \N$ and let $T$ be an $\alpha$-Ritt operator.
		The first step consists in showing that
		\begin{equation}\label{main-est}
			\sum_{n=0}^N \binom{n+k}{k}^2
			\|T^n (I-T)^{k\alpha} x\|^2
			\le C \sum_{n=0}^N \|T^n x\|^2,
			\quad N \in \N,
		\end{equation}
		where $C$ depends only on $k$.
		
		Let $\lambda \in \mathbb{C}$ with $|\lambda| > 1$ and define
		$$
		\varphi_{N,k}(\lambda)
		:= \lambda^k R(\lambda, T)^k
		\sum_{n=0}^N \frac{T^n}{\lambda^{n}},
		\quad N \in \N.
		$$
		A straightforward computation based on the identity
		$$
		\lambda^k R(\lambda, T)^k
		= \sum_{n=0}^{\infty}
		\binom{n+k-1}{k-1}\frac{T^n}{\lambda^{n}}
		$$
		leads to
		$$
		\varphi_{N,k}(\lambda)
		= \sum_{n=0}^\infty
		\sum_{j=0}^{\min\{n,N\}}
		\binom{n-j+k-1}{k-1}
		\frac{ T^n}{\lambda^n}.
		$$
		Applying Parseval’s identity yields
		$$
		\sum_{n=0}^N
		\Bigg(\sum_{j=0}^{n}
		\binom{n-j+k-1}{k-1}\Bigg)^2
		\frac{\|T^n x\|^2}{\rho^{2n}}
		\le \frac{1}{2\pi}
		\int_0^{2\pi}
		\|\varphi_{N,k}(\rho e^{i\theta})x\|^2
		\, d\theta,
		$$
		for every $x \in H$ and $\rho > 1$.
		
		Using
		\begin{equation*}\label{binom}
			\sum_{j=0}^m \binom{r+j}{r}
			= \binom{r+m+1}{r+1},
		\end{equation*}
		we obtain
		$$
		\sum_{j=0}^{n}
		\binom{n-j+k-1}{k-1}
		= \binom{n+k}{k}.
		$$
		It follows that
		$$
		\sum_{n=0}^N
		\binom{n+k}{k}^2
		\frac{\|T^n x\|^2}{\rho^{2n}}
		\le \frac{1}{2\pi}
		\int_0^{2\pi}
		\|\varphi_{N,k}(\rho e^{i\theta})x\|^2
		\, d\theta.
		$$
		Substituting $x$ by $(I-T)^{k\alpha}x$ and using Lemma \ref{lemmaalpha},
		together with Parseval’s identity, we obtain
		\begin{align*}
			\sum_{n=0}^N
			\frac{\binom{n+k}{k}^2}{\rho^{2n}}
			\|T^n (I-T)^{k\alpha} x\|^2
			&\le \frac{M^{2k}\rho^{2k}}{2\pi}
			\sum_{n=0}^N
			\frac{\|T^n x\|^2}{\rho^{2n}}.
		\end{align*}
		Letting $\rho \to 1^+$ yields \eqref{main-est}.
		
		For $(x,y)\in H \times H^*$ and $N \in \N$, we have
		$$
		\binom{N+k+1}{k+1}
		\langle T^N (I-T)^{k\alpha} x, y \rangle
		= \sum_{n=0}^N
		\left\langle \binom{n+k}{k} T^n (I-T)^{k\alpha} x,
		(T^*)^{N-n} y \right\rangle.
		$$
		
		By the Cauchy--Schwarz inequality, it follows
		\begin{align}\label{ineTNI-Talpha}
			|\langle T^N (I-T)^{k\alpha} x, y \rangle|^2
			\le \frac{1}{\binom{N+k+1}{k+1}^2}
			\Bigg(
			\sum_{n=0}^N
			\binom{n+k}{k}^2
			\|T^n (I-T)^{k\alpha}x\|^2
			\Bigg)
			\Bigg(
			\sum_{n=0}^N
			\|(T^*)^{N-n}y\|^2
			\Bigg).
		\end{align}
		
		Now assume that $T$ is strongly Kreiss bounded.
		By Lemma \ref{cunyeisnlinco} and \eqref{main-est},
		there exists $\kappa > 0$ such that
		\begin{equation*}\label{eq:3.19-new}
			\sum_{n=0}^N
			\binom{n+k}{k}^2
			\|T^n (I-T)^{k\alpha} x\|^2
			\le C N \log^{\kappa}(N+1) \|x\|^2,
		\end{equation*}
		and
		$$
		\sum_{n=0}^N\|(T^*)^{N-n}y\|^2
		\le CN \log^{\kappa}(N+1).
		$$
		Combining these estimates with \eqref{ineTNI-Talpha} and taking the supremum over $\|x\| = \|y\| = 1$, we obtain
		$$
		\|T^N(I-T)^{k\alpha}\|
		\le \frac{C N \log^{\kappa}(N+1)}{\binom{N+k+1}{k+1}},
		\quad N \in \N.
		$$
		
		Since
		\begin{equation*}\label{estibinom}
			\frac{1}{\binom{N+k+1}{k+1}} \le \frac{(k+1)!}{N^{k+1}},
		\end{equation*}
		it follows that
		$$
		\|T^N(I-T)^{k\alpha}\|
		\le \frac{C \log^{\kappa}(N+1)}{N^k}.
		$$
		for $C$ depending only on $k$.
		Finally, applying Lemma \ref{momentinequality} and Proposition \ref{PropEstiTnSK}, we obtain
		\begin{align*}
			\|T^N (I-T)^k\|
			&\le C \|T^N\|^{(\alpha-1)/\alpha}
			\|T^N (I-T)^{k\alpha}\|^{1/\alpha} \le \frac{\log^{\kappa}(N+1)}{N^{k/\alpha}}.
		\end{align*}
	\end{proof}

	
	\section{A Detailed Review and Proof of Theorem \ref{Nevanlinna}}\label{rateofdecaygeneral}
	
	This section is devoted to providing a detailed proof of Theorem \ref{Nevanlinna}. Although this result was originally introduced by Nevanlinna \cite{Nev}, several key intermediate steps and technical estimates lack explicit details in the original text. We present a complete and self-contained proof here, both for the sake of clarity and because this foundational result will be heavily utilized in the subsequent sections of this paper.

	Let $T \in B(X)$. We recall that for $\lambda \in \rho(T)$, the series 
	$$
	\sum_{k=0}^{\infty} (\lambda - \mu)^k R(\lambda,T)^{k+1}
	$$
	is well-defined for $\mu$ belonging to the set $\mathcal{D}_{T,\lambda} := \big\{z \in \mathbb{C} : |\lambda - z| < \|R(\lambda,T)\|^{-1} \big\}$. In this case, the resolvent satisfies the identity
	\begin{equation}\label{eqKR}
		R(\mu, T) = \sum_{k=0}^{\infty} (\lambda - \mu)^k R(\lambda,T)^{k+1}, \quad \mu \in \mathcal{D}_{T,\lambda}.
	\end{equation}
	
	We begin with a useful preliminary lemma (see \cite[Lemma, p.~20]{Nev}).
	
	\begin{lemma}\label{lemNev}
		Let $\alpha > 1$ and let $T \in B(X)$ be an $\alpha$-Ritt operator. Then, for each $c \in \big(0, \frac{1}{2R_{\alpha}(T)(2\pi)^{\alpha}}\big)$, the closed curve $\Gamma$ defined by 
		$$
		\Gamma(\theta) := \Gamma_{c,\alpha}(\theta) := e^{i\theta - c|\theta|^{\alpha}}, \quad \theta \in [-\pi, \pi],
		$$
		satisfies the estimate
		\begin{equation}\label{Resineup}
			\|R(\lambda,T)\| \le \frac{C}{|\lambda-1|^{\alpha}}, \quad \lambda \in \Omega . 
		\end{equation}
		Here $\Omega := \{\lambda \in \operatorname{ext}(\Gamma), \; |\lambda|\le 2 \}$ and $\operatorname{ext}(\Gamma)$ denotes the exterior of the closed curve $\Gamma$ and the constant $C$ depends on $c$, $\alpha$ and $R_{\alpha}(T)$.
	\end{lemma}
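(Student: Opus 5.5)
The argument splits the region $\Omega$ into two pieces. On one piece we use the $\alpha$-Ritt estimate \eqref{alphaRittcondition} directly. On the other piece, which lies inside the unit disc between $\Gamma$ and $\mathbb{T}$, we use the Neumann-type expansion \eqref{eqKR} centred at a nearby point outside the disc.

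\emph{The easy piece.} For $\lambda\in\Omega$ with $|\lambda|>1$ (and $|\lambda|<2$), \eqref{alphaRittcondition} gives the claim with $C=R_\alpha(T)$. For $|\lambda|=2$ it follows by continuity, or from the Neumann series if one prefers.

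\emph{The piece inside the disc.} Take $\lambda=re^{i\theta}\in\operatorname{ext}(\Gamma)$ with $r\le 1$ and $\theta\in[-\pi,\pi]$. Lying outside $\Gamma$ means $e^{-c|\theta|^\alpha}<r\le1$. Such points have $\theta\neq0$, since $\Gamma(0)=1$.
\begin{enumerate}
\item Choose the centre $\mu:=\rho e^{i\theta}$ with $\rho:=1+\eta|\theta|^\alpha$, where $\eta>0$ is a small parameter. Then $1<\rho<2$, and $|\mu-1|\ge|e^{i\theta}-1|\ge \frac{2}{\pi}|\theta|$. By \eqref{alphaRittcondition},
\[
\|R(\mu,T)\|\le \frac{R_\alpha(T)\,\pi^\alpha}{2^\alpha|\theta|^\alpha}.
\]
\item The distance from $\lambda$ to $\mu$ is
\[
|\mu-\lambda|=\rho-r\le \eta|\theta|^\alpha+1-e^{-c|\theta|^\alpha}\le(\eta+c)|\theta|^\alpha .
\]
\item Consequently $|\mu-\lambda|\,\|R(\mu,T)\|\le (\eta+c)R_\alpha(T)(\pi/2)^\alpha$. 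The hypothesis $c<\frac{1}{2R_\alpha(T)(2\pi)^\alpha}$ makes $cR_\alpha(T)(\pi/2)^\alpha$ far below $1/2$. Choosing $\eta$ small, say $\eta=c$, keeps the whole product at most $q<1$, with $q$ depending only on $c$, $\alpha$, $R_\alpha(T)$.
\item Hence $\lambda\in\mathcal D_{T,\mu}$, so \eqref{eqKR} applies and gives $\lambda\in\rho(T)$ with
\[
\|R(\lambda,T)\|\le \frac{\|R(\mu,T)\|}{1-q}\le \frac{C'}{|\theta|^\alpha}.
\]
\item Finally compare $|\theta|$ with $|\lambda-1|$. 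We have $|\lambda-1|\le|\lambda-e^{i\theta}|+|e^{i\theta}-1|\le c|\theta|^\alpha+|\theta|\le C''|\theta|$, because $|\theta|\le\pi$. Therefore $|\theta|^{-\alpha}\le C''^{\alpha}|\lambda-1|^{-\alpha}$, which gives \eqref{Resineup}.
\end{enumerate}

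The main obstacle is step 3: arranging the centre $\mu$ so that $\lambda$ lies strictly inside the disc of convergence $\mathcal D_{T,\mu}$, with a uniform margin $q<1$. This is exactly where the smallness of $c$ is used. One should keep track of the factors $2\pi$ versus $\pi/2$ coming from the inequality $|e^{i\theta}-1|\ge 2|\theta|/\pi$ and from the precise $\alpha$-Ritt constant. The stated bound on $c$ leaves enough room for any reasonable bookkeeping of these factors.

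The fact that $\operatorname{ext}(\Gamma)\cap\overline{\mathbb D}$ lies in $\rho(T)$ follows from the same expansion, so spectral inclusion needs no separate argument.
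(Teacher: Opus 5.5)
Your proof is correct, but it takes a more direct route than the paper.

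\textbf{The paper's route.} It first extends the $\alpha$-Ritt bound to $\mathbb{T}\setminus\{1\}$, citing Mahillo--Rueda. It then expands the resolvent in a Neumann series centred at $e^{i\theta}$, using this only to bound $\|R(\Gamma(\theta),T)\|$ on the curve itself. Finally, it passes to all of $\Omega$ by applying the maximum principle to the subharmonic function $u(\lambda)=|\lambda-1|^{\alpha}\|R(\lambda,T)\|$.

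\textbf{Your route.} You centre the expansion at $\mu=(1+\eta|\theta|^{\alpha})e^{i\theta}$, strictly outside the closed disc. There the defining $\alpha$-Ritt estimate is available directly, so no boundary extension is needed. You then bound every point of $\Omega\cap\overline{\mathbb{D}}$ pointwise, while the part of $\Omega$ outside the disc is covered by the hypothesis itself. Your individual checks hold up. For $\rho\ge 1$ one has $|\rho e^{i\theta}-1|^2-|e^{i\theta}-1|^2=(\rho-1)(\rho+1-2\cos\theta)\ge 0$. With $\eta=c$, the product in your step 3 is at most $2cR_{\alpha}(T)(\pi/2)^{\alpha}<4^{-\alpha}$.

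\textbf{What your approach buys.} You obtain $\Omega\subset\rho(T)$ as a by-product; the paper asserts analyticity of $(\lambda-1)^{\alpha}R(\lambda,T)$ on $\Omega$ without verifying this. You need no maximum principle. The paper's use of it would also require controlling $u$ near the boundary point $\lambda=1$, where $\Gamma$ touches $\mathbb{T}$, and that is not addressed. Your final comparison $|\lambda-1|\le(1+c\pi^{\alpha-1})|\theta|$ also places the constant correctly, which the paper's corresponding step does not. The paper's structure (bound on the contour, then maximum principle) would be the natural one if estimates were available only on $\Gamma$. Here the same Neumann perturbation already reaches every point between $\Gamma$ and $\mathbb{T}$, so that step is superfluous.
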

	\begin{center}
		\begin{tikzpicture}[scale=2.4]
			\draw[->] (-1.1,0) -- (1.1,0) node[right] {$x$};
			\draw[->] (0,-1.1) -- (0,1.1) node[above] {$y$};
			\foreach \x/\xtext in {-1} {
				\draw (\x,0.05cm) -- (\x,-0.05cm) node[below] {$\xtext\phantom{-}\strut$};
			}
			\foreach \x/\xtext in {1} {
				\draw (\x,0.05cm) -- (\x,-0.05cm) node[below] {$\xtext\strut$};
			}
			\foreach \y in {-1,1} {
				\draw (0.05cm,\y) -- (-0.05cm,\y) node[left] {$\y\strut$};
			}
			\node[below left=0.1cm] at (-0,0) {$0\strut$};
			\node[anchor=south] at (0, -1.4) { Curves $\Gamma_{c,\alpha}$ for $\alpha = 2$ };
			\begin{scope}
				\clip (-1.2,-1.2) rectangle (1.2,1.2);
				\draw[color=blue,domain=-3.14:3.14,samples=200,smooth]
				plot ({exp(-0.5*abs(\x)^2)*cos(deg(\x))}, {exp(-0.5*abs(\x)^2)*sin(deg(\x))});
				\draw[color=violet,domain=-3.14:3.14,samples=200,smooth]
				plot ({exp(-0.25*abs(\x)^2)*cos(deg(\x))}, {exp(-0.25*abs(\x)^2)*sin(deg(\x))});
				\draw[color=red,domain=-3.14:3.14,samples=200,smooth]
				plot ({exp(-0.1*abs(\x)^2)*cos(deg(\x))}, {exp(-0.1*abs(\x)^2)*sin(deg(\x))});
				\draw[color=green,domain=-3.14:3.14,samples=200,smooth]
				plot ({exp(-0.05*abs(\x)^2)*cos(deg(\x))}, {exp(-0.05*abs(\x)^2)*sin(deg(\x))});
				\draw[color=orange,domain=-3.14:3.14,samples=200,smooth]
				plot ({exp(-0.025*abs(\x)^2)*cos(deg(\x))}, {exp(-0.025*abs(\x)^2)*sin(deg(\x))});
				\draw[color=gray,domain=-3.14:3.14,samples=200,smooth]
				plot ({exp(-0.005*abs(\x)^2)*cos(deg(\x))}, {exp(-0.005*abs(\x)^2)*sin(deg(\x))});
			\end{scope}
			\node [ fill=white, font=\scriptsize] at (1.6, 0) {
				\begin{tabular}{ll}
					\textcolor{blue}{\rule{0.3cm}{0.4pt}}  $c = 0.5$ \\
					\textcolor{violet}{\rule{0.3cm}{0.4pt}}  $c = 0.25$ \\
					\textcolor{red}{\rule{0.3cm}{0.4pt}}  $c = 0.1$ \\
					\textcolor{green}{\rule{0.3cm}{0.4pt}}  $c = 0.05$ \\
					\textcolor{orange}{\rule{0.3cm}{0.4pt}}  $c = 0.025$ \\
					\textcolor{gray}{\rule{0.3cm}{0.4pt}}  $c = 0.005$ \\
				\end{tabular}
			};
		\end{tikzpicture}
	\end{center}

	\begin{proof}
		First from the preceding discussion (see \cite[Proposition 2.2]{MahRue}, for  every $\theta \in [-\pi,\pi]\backslash\{0\}$, $\norm{R(e^{i\theta},T)} \le \frac{R_{\alpha}(T)}{|e^{i\theta}-1|^{\alpha}}$. Furthermore for $\theta \in [-\pi,\pi]\backslash\{0\}$, $R(e^{i\theta} - z, T)$ is well-defined as long as $e^{i\theta}-z \in \mathcal{D}_{T,e^{i\theta}}$. In this case, using \eqref{eqKR} and the $\alpha$-Ritt condition, we have
		\begin{align*}
			\|R(e^{i\theta}-z,T)\| 
			&= \left\| \sum_{k=0}^{\infty} (e^{i\theta}-z-e^{i\theta})^k R(e^{i\theta},T)^{k+1} \right\| \\
			&\le \frac{R_{\alpha}(T)}{|e^{i\theta}-1|^{\alpha}} \sum_{k=0}^{\infty} \frac{R_{\alpha}(T)^k |z|^k}{|e^{i\theta}-1|^{k\alpha}}.
		\end{align*}
		By choosing $z \in \mathbb{C}$ such that $|z| \le \frac{|e^{i\theta}-1|^{\alpha}}{2R_{\alpha}(T)}$, we ensure that $e^{i\theta}-z \in \mathcal{D}_{T,e^{i\theta}}$ and
		\begin{equation}\label{inelemN}
			\|R(e^{i\theta}-z,T)\| \le \frac{2R_{\alpha}(T)}{|e^{i\theta}-1|^{\alpha}}.
		\end{equation}
		Furthermore, since $1-e^{-x} \le x$ for every real number $x$, we have
		$$
		1-e^{-c|\theta|^{\alpha}} \le c|\theta|^{\alpha}, \quad \theta \in [-\pi, \pi].
		$$ 
		Utilizing the standard inequality
		\begin{equation}\label{inesin}
			|\theta| \le \frac{\pi}{2} \times 2\left|\sin\Big(\frac{\theta}{2}\Big)\right| = \frac{\pi}{2}|1-e^{i\theta}|, \quad \theta \in [-\pi,\pi], 
		\end{equation}
		it follows that by setting $z = e^{i\theta} - \Gamma(\theta)$, we obtain 
		\begin{equation*}\label{inezalp}
			|z| = 1-e^{-c|\theta|^{\alpha}} \le c \left(\frac{\pi}{2}|1-e^{i\theta}|\right)^{\alpha} \le \frac{|1-e^{i\theta}|^{\alpha}}{2R_{\alpha}(T)}. 
		\end{equation*}
		Since $e^{i\theta} - z = e^{i\theta-c|\theta|^{\alpha}}$, inequality \eqref{inelemN} implies that
		$$
		\|R(\Gamma(\theta),T)\| \le \frac{2R_{\alpha}(T)}{|e^{i\theta}-1|^{\alpha}}, \quad \theta \in [-\pi,\pi].
		$$
		We note that since $\alpha > 1$, 
		\begin{equation*}\label{esteitheta}
			|1-e^{i\theta - c|\theta|^{\alpha}}| \le \Big(1+\frac{c\pi^{\alpha}}{2}\Big)|1-e^{i\theta}|, \quad \theta \in [-\pi, \pi],
		\end{equation*}
		which yields 
		\begin{equation}\label{EstiuGamma}
			\|R(\Gamma(\theta),T)\| \le \frac{2R_{\alpha}(T)}{\Big(1+\frac{c\pi^{\alpha}}{2}\Big)|e^{i\theta-c|\theta|^{\alpha}}-1|^{\alpha}} = \frac{2R_{\alpha}(T)}{\Big(1+\frac{c\pi^{\alpha}}{2}\Big)|\Gamma(\theta) - 1|^{\alpha}}, \quad \theta \in [-\pi,\pi].
		\end{equation}
		Finally, consider the function
		$$
		\begin{array}{ccccc}
			u & : & \Omega := \{\lambda \in \operatorname{ext}(\Gamma), \; |\lambda|\le 2 \} & \to & \mathbb{R}_+ \\
			& & \lambda & \mapsto & |\lambda-1|^{\alpha}\|R(\lambda, T)\|. \\
		\end{array}
		$$
		Since $\lambda \mapsto (\lambda-1)^{\alpha} R(\lambda,T)$ is analytic on  $\Omega$  (where we use an appropriate determination of logarithm), then by \cite[Theorem 3.13.1]{HilPhi}, $u$ is subharmonic on $\Omega$. Moreover $u$ is bounded on $\{\lambda \in \mathbb{C} : |\lambda| = 2\}$ as well as on $\{ \Gamma(\theta) : \theta \in [-\pi,\pi] \}$ by \eqref{EstiuGamma}, the required estimate \eqref{Resineup} follows directly from the maximum principle.
	\end{proof}

	To facilitate the presentation of the main proof, we establish two technical lemmas.
	
	\begin{lemma}\label{lem_gamma_estimates}
		Let $a > 0$, $b \in \mathbb{R}$, and $d > 0$. Then 
		\begin{equation}\label{esti2fungamma}
			\int_{ \big(\frac{d}{n}\big)^{1/a} }^{\infty} e^{-n d \theta^{a}} \theta^{b} \, d\theta
			\le C n^{-\frac{1+b}{a}}, \quad n \in \mathbb{N},
		\end{equation}
		where $C$ depends only on $a, b$, and $d$. 
	\end{lemma}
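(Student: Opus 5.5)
The approach is the substitution $u = n d\,\theta^{a}$, which turns the integral into an incomplete Gamma integral. Here $\theta = (u/(nd))^{1/a}$ and
\[
d\theta = \tfrac{1}{a}(nd)^{-1/a} u^{1/a-1}\,du .
\]
The lower limit $\theta_0=(d/n)^{1/a}$ maps to $u_0 = nd\cdot d/n = d^2$, a fixed positive number independent of $n$. This is the reason for that particular choice of cutoff. The integral becomes
\[
\int_{\theta_0}^{\infty} e^{-nd\theta^{a}}\theta^{b}\,d\theta
= \frac{1}{a}(nd)^{-\frac{1+b}{a}} \int_{d^2}^{\infty} e^{-u} u^{\frac{1+b}{a}-1}\,du .
\]

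Next I will check that the remaining integral $\int_{d^2}^\infty e^{-u}u^{s-1}\,du$, with $s=(1+b)/a$, is a finite constant depending only on $a,b,d$. Since the lower limit $d^2>0$ stays away from $0$, there is no integrability issue at the origin for any real $s$. This matters because $b$ may be very negative, in which case $s\le 0$ and the full Gamma integral from $0$ would diverge. At infinity the exponential dominates any power, so the tail converges. This is the incomplete Gamma value $\Gamma(s,d^2)<\infty$.

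Taking
\[
C = \frac{1}{a}\,d^{-\frac{1+b}{a}}\,\Gamma\!\Big(\tfrac{1+b}{a},d^2\Big)
\]
gives the bound $C n^{-\frac{1+b}{a}}$, in fact with equality, for every $n\in\mathbb{N}$.

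There is no real obstacle here. The only point to watch is the case $1+b\le 0$, and it is handled exactly because the lower limit of integration is positive and independent of $n$.
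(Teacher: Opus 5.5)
Your proposal is correct and is essentially the paper's proof: the substitution $u = nd\theta^{a}$ sends the lower limit to $d^{2}$ and reduces the integral to $\frac{1}{a}(nd)^{-\frac{1+b}{a}}\int_{d^2}^{\infty} e^{-u}u^{\frac{1+b}{a}-1}\,du$, which is a finite constant times $n^{-\frac{1+b}{a}}$. You also note explicitly that the positive lower limit makes this work for every real $b$, including $b\le -1$, a point the paper leaves implicit.
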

	
	\begin{proof}
		Performing the change of variables $t = n d \theta^{a}$, we obtain
		$$
		\theta = \Big(\frac{t}{n d}\Big)^{1/a}, \qquad 
		d\theta = \frac{1}{a}(n d)^{-1/a} t^{\frac{1}{a}-1} \, dt.
		$$
		Hence,
		\begin{align*}
			\int_{ \big(\frac{d}{n}\big)^{1/a} }^{\infty} e^{-n d \theta^{a}} \theta^{b} \, d\theta
			&= \frac{1}{a}(n d)^{-\frac{1+b}{a}} \int_{d^{2}}^{\infty} t^{\frac{1+b}{a}-1} e^{-t} \, dt,
		\end{align*}
		and the estimate \eqref{esti2fungamma} follows.
	\end{proof}
	
	\begin{remark}
		If $b > -1$, then $\frac{1+b}{a}-1 > -1$. Since $\int_{0}^{\infty} t^{s} e^{-t} \, dt < \infty$ for any $s > -1$, we obtain 
		\begin{equation}\label{esti3fungamma}
			\int_{0}^{\infty} e^{-n d \theta^{a}} \theta^{b} \, d\theta
			\le C n^{-\frac{1+b}{a}}, \quad n \in \mathbb{N}, \, a > 0, \, b > -1, 
		\end{equation}
		where $C$ depends on $a, b$, and $d$.
	\end{remark}
	
	\begin{lemma}
		Let $r > 0$. Then 
		\begin{equation}\label{ine12eitheta}
			|re^{i\theta} - 1| \ge \frac{1}{2}|e^{i\theta} - 1|, \quad \theta \in [-\pi, \pi].
		\end{equation}
	\end{lemma}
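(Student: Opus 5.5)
The inequality $|re^{i\theta}-1| \ge \frac12|e^{i\theta}-1|$ for $r>0$ is elementary. I would prove it by a direct comparison of squared moduli, splitting according to the sign of $\cos\theta$.

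First compute
$$|re^{i\theta}-1|^2 = r^2 - 2r\cos\theta + 1, \qquad |e^{i\theta}-1|^2 = 2-2\cos\theta = 4\sin^2(\theta/2).$$
Viewed as a quadratic in $r$, the first expression is minimized over $r>0$ as follows.

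If $\cos\theta > 0$, the minimum is attained at $r=\cos\theta$, where it equals $1-\cos^2\theta = \sin^2\theta$.

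If $\cos\theta \le 0$, the infimum over $r>0$ is the value at $r\to 0$, namely $1$. Since $|e^{i\theta}-1|\le 2$, we have $\frac14|e^{i\theta}-1|^2 \le 1$, so this case is done at once.

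In the case $\cos\theta>0$, that is $|\theta|<\pi/2$, it remains to check
$$\sin^2\theta \ge \tfrac14\cdot 4\sin^2(\theta/2) = \sin^2(\theta/2).$$
Writing $\sin\theta = 2\sin(\theta/2)\cos(\theta/2)$, this reduces to $4\cos^2(\theta/2)\ge 1$. For $|\theta|<\pi/2$ we have $\cos^2(\theta/2) > \frac12$, so in fact $4\cos^2(\theta/2) > 2$. This gives even the stronger constant $1/\sqrt2$ in this range.

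Combining the two cases yields
$$|re^{i\theta}-1|^2 \ge \tfrac14|e^{i\theta}-1|^2 \quad \text{for all } r>0,\ \theta\in[-\pi,\pi],$$
and taking square roots gives \eqref{ine12eitheta}. There is no real obstacle. The only care needed is to treat the regime $\cos\theta\le0$ separately, since there the minimizing $r$ would be nonpositive.

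A geometric alternative gives the same result: $|re^{i\theta}-1|$ is at least the distance from $1$ to the ray $\{se^{i\theta}: s>0\}$. That distance is $|\sin\theta|$ when $|\theta|\le\pi/2$, and $1$ otherwise.
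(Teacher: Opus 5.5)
Your proof is correct and follows the paper's argument essentially step for step. Like the paper, you minimize $r^2-2r\cos\theta+1$ over $r$, dispose of the case $\cos\theta\le 0$ using $|re^{i\theta}-1|\ge 1\ge \frac12|e^{i\theta}-1|$, and for $\cos\theta>0$ reduce $|\sin\theta|\ge|\sin(\theta/2)|$ to a lower bound on $\cos(\theta/2)$ via the double-angle formula.
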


	\begin{proof}
		Let $\theta \in [-\pi, \pi]$. We observe that
		$$
		|r e^{i\theta}-1|^{2} = r^{2}-2r\cos\theta+1,
		$$
		whose minimum over $r \ge 0$ occurs at $r = \max(\cos\theta, 0)$. 
		
		If $\cos\theta \le 0$, then $|r e^{i\theta}-1| \ge 1$. Since $|e^{i\theta} - 1| = 2|\sin(\tfrac{\theta}{2})| \le 2$, estimate \eqref{ine12eitheta} immediately follows.
		
		If $\cos\theta > 0$, then 
		$$
		|r e^{i\theta} - 1| \ge |\cos(\theta) e^{i\theta} - 1| = |\sin(\theta)|.
		$$
		Using the identity $\sin\theta = 2\sin(\tfrac{\theta}{2})\cos(\tfrac{\theta}{2})$ and noting that $\cos(\tfrac{\theta}{2}) \ge \frac{\sqrt{2}}{2} \ge \frac{1}{2}$ whenever $\cos\theta > 0$, we obtain
		$$
		|re^{i\theta} - 1| \ge |\sin\theta| = 2\left|\sin\Big(\frac{\theta}{2}\Big)\right|\left|\cos\Big(\frac{\theta}{2}\Big)\right|
		\ge \left|\sin\Big(\frac{\theta}{2}\Big)\right| = \frac{1}{2}|e^{i\theta} - 1|,
		$$
		which establishes inequality \eqref{ine12eitheta}.
	\end{proof}

	\begin{proof}[Proof of Theorem \ref{Nevanlinna}]
		For any integer $n > 1$, let $\Gamma_n := \big(1+\frac{1}{n}\big)\Gamma$, where $\Gamma$ is the curve defined in Lemma \ref{lemNev}. Then $\Gamma_n([-\pi, \pi]) \subset \operatorname{ext}(\Gamma) \cap D(0,2)$. By the Dunford--Riesz operational calculus, we can write
		$$
		T^n(I-T)^k = \frac{1}{2i\pi} \int_{\Gamma_n} \lambda^n(1-\lambda)^k R(\lambda,T) \, d\lambda.
		$$
		Taking the norm yields
		$$
		\|T^n(I-T)^k\| \le \frac{1}{2\pi} \int_{\pi}^{\pi} \big|\Gamma_n'(\theta)\big| \big|\Gamma_n(\theta)\big|^n \big|1-\Gamma_n(\theta)\big|^k \|R(\Gamma_n(\theta),T)\| \, d\theta.
		$$
		For $\theta \in [-\pi, \pi]$, we have $|\Gamma_n(\theta)| = \big(1+\frac{1}{n}\big)e^{-c|\theta|^{\alpha}}$ and
		$$
		|\Gamma'_n(\theta)| = \Big(1+\frac{1}{n}\Big)\Big|-c\alpha \operatorname{sign}(\theta)|\theta|^{\alpha-1} + i\Big|e^{-c|\theta|^{\alpha}} \le 2(c\alpha \pi^{\alpha-1} + 1).
		$$
		Using \eqref{Resineup} along with the inequality $\big(1+\frac{1}{n}\Big)^n \le e$ for all $n \in \mathbb{N}$, we find
		\begin{align*}
			\|T^n(I-T)^k\| \le C \int_{-\pi}^{\pi} e^{-nc|\theta|^{\alpha}} |1-\Gamma_n(\theta)|^k \|R(\Gamma_n(\theta),T)\| \, d\theta,
		\end{align*}
		where $C$ depends on $c$ and $\alpha$. 
		
		For integers $n > 1$, we define $\theta_n := \big(\frac{c}{n}\big)^{1/\alpha}$. For any $\theta$ satisfying $0 \le |\theta| \le \theta_n$, we observe that
		$$
		|\Gamma_n(\theta)| \ge |\Gamma_n(\theta_n)| = \Big(1+\frac1n\Big)e^{-c^2/n} \ge \Big(1+\frac1n\Big)\Big(1-\frac{c^2}{n} \Big) = 1 + \frac{1-c^2}{n} - \frac{c^2}{n^2} \ge 1 + \frac{1-2c^2}{n}.
		$$
		Thus, assuming $c < \frac{1}{\sqrt{2}}$, we ensure that $|\Gamma_n(\theta)| > 1$ for all $0 \le |\theta| \le \theta_n$.
		
		We now partition the interval of integration into two regions:
		\begin{align}\label{defI1I2}
			\nonumber \int_{-\pi}^{\pi} e^{-nc|\theta|^{\alpha}} |1-\Gamma_n(\theta)|^k \|R(\Gamma_n(\theta),T)\| \, d\theta
			&= \int_{0 \le |\theta| \le \theta_n} \cdots \, d\theta + \int_{\theta_n \le |\theta| \le \pi} \cdots \, d\theta \\
			&=: I_1 + I_2.
		\end{align}
		On the interval $0 \le |\theta| \le \theta_n$, we exploit the hypothesis that $T$ is Kreiss bounded (since $|\Gamma_n(\theta)|> 1$ there). On the remaining interval $\theta_n \le |\theta| \le \pi$, we leverage the $\alpha$-Ritt condition. This yields the bounds
		$$
		I_1 \le \int_{0 \le |\theta| \le \theta_n} e^{-nc|\theta|^{\alpha}} \frac{|1-\Gamma_n(\theta)|^{k}}{|\Gamma_{n}(\theta)|-1} \, d\theta \quad \text{and} \quad I_2 \le \int_{\theta_n \le |\theta| \le \pi} e^{-nc|\theta|^{\alpha}} |1-\Gamma_n(\theta)|^{k-\alpha} \, d\theta.
		$$
		For any $\theta$ such that $0 < |\theta| \le \pi$, we can write
		\begin{align*}
			\Big|\Big(1+\frac{1}{n}\Big)e^{-c|\theta|^{\alpha}+i\theta} - 1\Big| 
			&\le |e^{i\theta}-1| + \Big|\Big(1+\frac{1}{n}\Big)e^{-c|\theta|^{\alpha}} - 1 \Big| \\
			&\le |\theta| + |1-e^{-c|\theta|^{\alpha}}| + \frac{e^{-c|\theta|^{\alpha}}}{n} \\
			&\le |\theta| + c|\theta|^{\alpha} + \frac{1}{n}. 
		\end{align*}
		Consequently, we obtain the estimate
		\begin{equation}\label{estGamma_nmin1}
			\big|1-\Gamma_n(\theta)\big| \le \left\{
			\begin{array}{ll}
				C\Big(|\theta| + \frac1n\Big) & \text{if } 0 \le |\theta| \le \theta_n, \\
				C|\theta| & \text{if } \theta_n \le |\theta| \le \pi,
			\end{array}
			\right.
		\end{equation}
		where $C$ depends on $\alpha$ and $c$.
		
		To bound $I_1$, we use $e^{-nc|\theta|^{\alpha}} \le 1$ and the lower bound $|\Gamma_n(\theta)| - 1 \ge \frac{1-2c^2}{n}$ valid for $0 \le |\theta| \le \theta_n$. It follows that
		\begin{align*} 
			I_1 &\le \frac{C^k n}{1-2c^2} \int_{0 \le |\theta| \le \theta_n} \Big(|\theta|+\frac{1}{n}\Big)^k \, d\theta 
			= \frac{2C^k n}{(1-2c^2)(k+1)} \left[ \Big(\frac{c}{n^{1/\alpha}} + \frac{1}{n}\Big)^{k+1} - \frac{1}{n^{k+1}} \right] \\
			&\le C' n^{1-\frac{{k+1}}{\alpha}} = C' n^{\frac{\alpha - k - 1}{\alpha}},
		\end{align*}
		where $C'$ depends on $\alpha,c$ and $k$.
		
		To bound $I_2$, we apply inequalities \eqref{ine12eitheta} and \eqref{inesin} to obtain
		\begin{equation}\label{ineremoveass}
			|1-\Gamma_n(\theta)| \ge \frac{1}{2}|e^{i\theta}-1| \ge \frac{1}{2} \cdot \frac{2}{\pi}|\theta| = \frac{1}{\pi}|\theta|, \quad \theta \in [-\pi,\pi].
		\end{equation}
		By combining \eqref{estGamma_nmin1} (when $k-\alpha \ge 0$) and \eqref{ineremoveass} (when $k-\alpha < 0$), we deduce that
		$$
		\int_{\theta_n \le \theta \le \pi} e^{-nc\theta^{\alpha}} |1-\Gamma_n(\theta)|^{k-\alpha} \, d\theta \le C'' \int_{\theta_n \le \theta \le \pi} e^{-nc\theta^{\alpha}} \theta^{k-\alpha} \, d\theta,
		$$
		where $C''$ depends on $\alpha,c$ and $k$.
		Finally, using \eqref{esti2fungamma}, we conclude that
		$$
		I_2 \le 2C'' \int_{\big(\frac{c}{n}\big)^{1/\alpha}}^{\infty} e^{-nc\theta^{\alpha}} \theta^{k-\alpha} \, d\theta \le M n^{-\frac{1+k-\alpha}{\alpha}} = M n^{\frac{\alpha-k-1}{\alpha}}.
		$$
		This completes the proof.
	\end{proof}

	We conclude this section with a proposition stating that in the case $\alpha = 2$, the estimates given in \eqref{NevEsti} are sharp. The proof is provided in \cite[Example 5]{Nev} for $k \in \{0, 1\}$. We provide further details in Section \ref{SectionExempleNev}, including the cases $k \ge 2$.
	
	\begin{prop}\label{PropNevExample}
		There exist a Banach space $X$ and an operator $T \in B(X)$ which is Kreiss bounded and $2$-Ritt, such that for some constant $C > 1$ and all $n \ge 1$:
		\[
		\frac{1}{C} n^{\frac{1-k}{2}} \le \norm{T^n(I-T)^k} \le C n^{\frac{1-k}{2}}.
		\]
	\end{prop}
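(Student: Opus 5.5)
The upper bound needs no new work: with $\alpha=2$, Theorem \ref{Nevanlinna} already gives $\norm{T^n(I-T)^k}=O(n^{(1-k)/2})$ for any Kreiss bounded $2$-Ritt operator. So everything reduces to building one explicit operator that satisfies both resolvent conditions and for which the matching lower bound holds for every $k\ge 0$.

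\textbf{The model.} I would take a multiplication operator on a space of functions living on a curve that touches $\mathbb{T}$ only at $1$, with quadratic tangency. Let $K:=\{\Gamma(\theta)=e^{i\theta-\theta^2}:\ \theta\in[-\pi,\pi]\}$, which is the curve $\Gamma_{1,2}$ of Lemma \ref{lemNev}. Let $X$ be the space of absolutely continuous functions on $K$ with
$$\norm{f}:=\sup_K|f|+\int_K|f'(z)|\,|dz|,$$
a Banach algebra, and set $T:=M_z$, so that $\sigma(T)=K\subset\overline{\D}$. For a function $g$ analytic near $K$ we have $g(T)=M_g$, and
$$\norm{M_g}\asymp \sup_K|g|+\int_K|g'|\,|dz|.$$
The upper comparison holds because $X$ is an algebra; the lower one comes from testing on $f\equiv 1$.

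\textbf{Power estimates.} On $K$, $|z|^n=e^{-n\theta^2}$ and $|dz|\asymp d\theta$. For $g=z^n(1-z)^k$ with $k\ge1$ the supremum is $O(n^{-k/2})$, and the variation is
$$\int_K|g'|\,|dz|\ \ge\ n\int e^{-n\theta^2}|1-\Gamma(\theta)|^k\,d\theta\;-\;k\int e^{-n\theta^2}|1-\Gamma(\theta)|^{k-1}\,d\theta .$$
Since $|1-\Gamma(\theta)|\asymp|\theta|$, the scaling $\theta=t/\sqrt n$ (as in \eqref{esti3fungamma}) gives $\asymp n^{(1-k)/2}$ for the first term and $O(n^{-k/2+1/2}\cdot n^{-1/2})$ for the second. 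To get the lower bound cleanly I would restrict the integral to the range $|\theta|\in[A/\sqrt n,\,2A/\sqrt n]$ with $A$ large; there the factor $|n-k z/(1-z)|$ in $g'=z^{n-1}(1-z)^{k-1}\big(n(1-z)-kz\big)$ is $\ge n/2$. This yields $\norm{T^n(I-T)^k}\ge c\,n^{(1-k)/2}$, and the case $k=0$ gives $\norm{T^n}\asymp n^{1/2}$.

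\textbf{Resolvent conditions.} For $\lambda\notin K$ we have $R(\lambda,T)=M_{(\lambda-z)^{-1}}$, so
$$\norm{R(\lambda,T)}\asymp \sup_K\frac1{|\lambda-z|}+\int_K\frac{|dz|}{|\lambda-z|^2}.$$
\begin{itemize}
\item \emph{Kreiss.} For $|\lambda|>1$, $K\subset\overline{\D}$ gives $\sup_K|\lambda-z|^{-1}\le(|\lambda|-1)^{-1}$. The integral is at most $\int_{-\pi}^{\pi}|\lambda-e^{i\theta}|^{-2}\,d\theta\lesssim(|\lambda|-1)^{-1}$, by Lemma \ref{LemmaIntRes} with $\gamma=2$, after comparing the distance to $K$ with the distance to $\mathbb{T}$.
\item \emph{$2$-Ritt.} I would use that $\operatorname{dist}(\lambda,K)\gtrsim|\lambda-1|^2$ for $|\lambda|>1$, which follows from the quadratic tangency of $K$ at $1$. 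Then the supremum term is $\lesssim|\lambda-1|^{-2}$. For the integral, split $K$ into the part with $|z-1|\le|\lambda-1|/2$, where $|\lambda-z|\asymp|\lambda-1|$, and its complement, where $|\lambda-z|\gtrsim|z-1|^2$ and also $|\lambda-z|\gtrsim|\lambda-1|$. This should give a bound of order $|\lambda-1|^{-2}$.
\end{itemize}

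\textbf{Main obstacle.} The hardest step is this uniform $|\lambda-1|^{-2}$ bound for the variation term as $\lambda\to1$ tangentially from outside the disc. The integral $\int|\lambda-z|^{-2}|dz|$ is really of size $(\operatorname{dist}(\lambda,K))^{-1}$, so it must be compared carefully against $|\lambda-1|^{-2}$. If the first choice of the norm's geometry fails, I would either pass to a flatter curve or replace the variation norm by a weighted one. The chosen norm must keep $\norm{T^n}\asymp n^{1/2}$ while making the $2$-Ritt estimate hold, and I expect that balancing these two requirements is where the real work lies.
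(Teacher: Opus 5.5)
Your construction works, and so do your Kreiss estimate, the lower bound via the window $|\theta|\in[A/\sqrt n,2A/\sqrt n]$, and the upper bound. The gap is exactly the step you flag: the $2$-Ritt bound for the variation term $\int_K|\lambda-z|^{-2}|dz|$ is not proved, and the split you sketch fails.

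On the complement $\{|z-1|>|\lambda-1|/2\}$ it is false that $|\lambda-z|\gtrsim|\lambda-1|$. Take $\lambda=(1+\varepsilon)e^{it}$ with $0<\varepsilon\ll t^2$ and $z=\Gamma(t)$. Then $|z-1|\approx|\lambda-1|\approx|t|$, yet $|\lambda-z|=\varepsilon+1-e^{-t^2}\approx t^2$. The other bound you use there, $|\lambda-z|\gtrsim|z-1|^2$, only gives $\int_{|\theta|\gtrsim|\lambda-1|}\theta^{-4}\,d\theta\approx|\lambda-1|^{-3}$, one power too many. The missing ingredient is that $|\lambda-z|$ also grows linearly as $z$ moves along $K$ away from the point of $K$ nearest to $\lambda$.

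The step closes without changing the curve or the norm. For $\lambda=re^{it}$ with $1<r<2$,
\[
|\lambda-\Gamma(\theta)|^2=\bigl(r-e^{-\theta^2}\bigr)^2+2re^{-\theta^2}\bigl(1-\cos(t-\theta)\bigr)\gtrsim\bigl((r-1)+\theta^2\bigr)^2+|e^{it}-e^{i\theta}|^2 .
\]
\begin{itemize}
\item On $\{|\theta|\ge|t|/2\}$ the first term is $\gtrsim a^2$, where $a:=(r-1)+t^2$. Since $\int_{-\pi}^{\pi}\bigl(a^2+|e^{it}-e^{i\theta}|^2\bigr)^{-1}d\theta\lesssim a^{-1}$, this part contributes $\lesssim a^{-1}$.
\item On $\{|\theta|<|t|/2\}$ the second term is $\gtrsim t^2$, so the integrand is $\lesssim((r-1)^2+t^2)^{-1}\lesssim|\lambda-1|^{-2}$. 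This set has length $|t|\lesssim|\lambda-1|$, so it contributes $\lesssim|\lambda-1|^{-1}$.
\end{itemize}
Since $a\ge(r-1)^2+t^2\gtrsim|\lambda-1|^2$, the total is $\lesssim|\lambda-1|^{-2}$.

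Alternatively, your own remark that the integral is of size $\operatorname{dist}(\lambda,K)^{-1}$ already suffices once it is justified. The identity $|\rho e^{i\theta}-\rho'e^{i\theta'}|^2=(\rho-\rho')^2+\rho\rho'|e^{i\theta}-e^{i\theta'}|^2$ gives $|\Gamma(\theta)-\Gamma(\theta')|\ge e^{-\pi^2}|e^{i\theta}-e^{i\theta'}|$, hence $\ell(K\cap D(\lambda,s))\le Cs$. The layer-cake formula then gives
\[
\int_K|\lambda-z|^{-2}|dz|=\int_0^\infty 2s^{-3}\,\ell\bigl(K\cap D(\lambda,s)\bigr)\,ds\le\frac{2C}{\operatorname{dist}(\lambda,K)}.
\]
Combined with $\operatorname{dist}(\lambda,K)\gtrsim|\lambda-1|^2$, which you correctly derived, this is the $2$-Ritt bound.

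For comparison, the paper uses the same type of model, chosen so that this obstacle never arises. It takes $T=M_m$ with $m(z)=(1+z)/2$ on analytic functions on $\D$ continuous on $\overline{\D}$, normed by $\norm{f}_\infty+\norm{f'}_1$. The relevant curve is then the circle $|z-\tfrac12|=\tfrac12$, internally tangent to $\mathbb{T}$ at $1$. By residues, $\int_{-\pi}^{\pi}|re^{it}-m(e^{i\theta})|^{-2}\,d\theta=\frac{2\pi}{r(r-\cos t)}$, and for $1<r<2$ one has $r-\cos t\ge\max\{r-1,\tfrac14|\lambda-1|^2\}$. Both resolvent conditions are then immediate.

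The paper's lower bound uses $|m(e^{it})|=|\cos(t/2)|$ and a Laplace-type limit. Your window argument is an equally valid alternative: it needs $A\ge 2k\pi$, and the exponent $n-1$ and the factor $|\Gamma'|$ are absorbed into the constants.
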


	\section{$(\alpha,\beta)$-RK Operators and Power Growth Estimates}\label{SectAlphaBetaRK}

	In this section, we study a class of bounded linear operators that generalizes both the Ritt and Kreiss boundedness conditions. This $(\alpha,\beta)$-RK condition was originally introduced by Borovykh and Spijker \cite[Section 4]{BorSpi} in the framework of numerical stability for initial value problems, and has since been further investigated by Mahillo and Rueda \cite{MahRue}. Our primary objective is to establish sharp bounds on the growth of the sequence $\norm{T^n(I-T)^k}$ for $k \in \mathbb{N} \cup \{0\}$ by using techniques employed in the proof of Theorem \ref{Nevanlinna}.
	
	\begin{df}
		We say that $T \in B(X)$ is an $(\alpha,\beta)$-RK operator for $\alpha,\beta > 0$ if $\sigma(T) \subset \overline{\mathbb{D}}$ and there exists a constant $C > 0$ such that
		\begin{equation}\label{abcondition}
			\norm{R(\lambda,T)} \le \frac{C}{|\lambda-1|^{\alpha}(|\lambda|-1)^{\beta}}, \quad 1 < |\lambda| < 2.
		\end{equation}
		In the limiting case where $\beta = 0$, $T$ reduces to an $\alpha$-Ritt operator, whereas for $\alpha = 0$, $T$ is said to be $\beta$-Kreiss bounded. We denote by $R_{\alpha, \beta}(T)$ the smallest constant $C$ satisfying \eqref{abcondition}.
	\end{df}
	
	Let $T$ be an $(\alpha,\beta)$-RK operator. If $\alpha +\beta < 1$, then $\norm{T^n} = O(e^{-\omega n})$ for some $\omega > 0$ (see \cite[Corollary 3.3. case 1]{MahRue}. If $\alpha + \beta = 1$, then $T$ is a Ritt operator. Furthermore, provided that $\beta \ge 1$, then by \cite[Corollary 3.3.]{MahRue}, $\norm{T^n} = O(G_{\alpha,\beta}(n))$, where the auxiliary function $G_{\alpha,\beta}(n)$ is defined as
	\begin{equation*}
		G_{\alpha,\beta}(n) = 
		\begin{cases}
			n^{\beta}, & \text{if } 0 \le \alpha < 1, \\
			n^{\beta}\log(n+1), & \text{if } \alpha = 1, \\
			n^{\beta+\alpha -1}, & \text{if } \alpha > 1.
		\end{cases}
	\end{equation*}
	Since $\beta$-Kreiss boundedness constitutes a less restrictive condition than $(\alpha, \beta)$-RK boundedness, one cannot generally expect an estimate sharper than $\norm{T^n(I-T)^k} = O(G_{\alpha, \beta}(n))$ for $k \in \mathbb{N} \cup \{0\}$. Motivated by these observations, the subsequent analysis focuses on the non-trivial regime where $\alpha + \beta > 1$. We first state the following result: 
	
	\begin{thm}\label{abRK}
		Let $\alpha>0$ such that $\alpha + \beta > 1$. Let $T \in B(X)$ be an $(\alpha,\beta)$-RK operator. For any $k \in \mathbb{N} \cup \{0\}$, the following asymptotic estimates hold:
		\begin{enumerate}
			\item If $k < \alpha - 1$, then 
			$$
			\norm{T^n(I-T)^k} = O\Big(n^{\alpha + \beta - k -1}\Big).
			$$
			\item If $k = \lfloor \alpha - 1 \rfloor$, then 
			$$
			\norm{T^n(I-T)^k} = \begin{cases}
				O\Big(n^{\beta}\log(n+1)\Big), &\text{if } \alpha \in \N, \\
				O\Big(n^{\beta}\Big), &\text{if } \alpha \notin \N.
			\end{cases}
			$$
		\end{enumerate}
	\end{thm}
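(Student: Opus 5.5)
The plan is to run the Dunford--Riesz argument from the proof of Theorem \ref{Nevanlinna}, but on a circle instead of the curve $\Gamma$, since the $(\alpha,\beta)$-RK condition already controls the resolvent on every circle $|\lambda|=r$ with $1<r<2$. For $n\ge 2$ put $r_n:=1+\frac1n$ and write
\[
T^n(I-T)^k=\frac{1}{2i\pi}\int_{|\lambda|=r_n}\lambda^n(1-\lambda)^kR(\lambda,T)\,d\lambda .
\]
Using $r_n^{n}\le e$ and \eqref{abcondition} with $|\lambda|-1=\frac1n$, I obtain
\[
\|T^n(I-T)^k\|\le C\,n^{\beta}\int_{-\pi}^{\pi}\frac{d\theta}{|r_ne^{i\theta}-1|^{\alpha-k}} ,
\]
where $C$ depends only on $R_{\alpha,\beta}(T)$. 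The whole theorem is thereby reduced to bounding this single integral with exponent $\gamma:=\alpha-k$, and Lemma \ref{LemmaIntRes} does this case by case.

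\textbf{Case (1), $k<\alpha-1$.} Here $\gamma>1$, and Lemma \ref{LemmaIntRes} bounds the integral by $C_\gamma (r_n-1)^{1-\gamma}=C_\gamma n^{\alpha-k-1}$. Multiplying by the prefactor $n^{\beta}$ gives $O(n^{\alpha+\beta-k-1})$, which is the claim. This case needs nothing beyond the reduction.

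\textbf{Case (2), $\alpha\in\N$.} For $k=\alpha-1=\lfloor\alpha-1\rfloor$ we have $\gamma=1$. The logarithmic part of Lemma \ref{LemmaIntRes} applies because $r_n<\frac32$, and it bounds the integral by $C_1\log n$. This gives $O(n^{\beta}\log(n+1))$.

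\textbf{Case (2), $\alpha\notin\N$ (main obstacle).} Now $k=\lfloor\alpha-1\rfloor<\alpha-1$, so $\gamma\in(1,2)$, and the circle estimate only yields $n^{\beta+(\alpha-1-k)}$. This is weaker than $n^{\beta}$ by the factor $n^{\{\alpha\}}$, where $\{\alpha\}=\alpha-\lfloor\alpha\rfloor\in(0,1)$. The circle is therefore insufficient, and this is the step I expect to be hardest.

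To remove the extra factor I would use the shrinking curve of Lemma \ref{lemNev}. The first step is to re-prove Lemma \ref{lemNev} with the RK bound, using the Neumann series \eqref{eqKR} centred at points with $|\lambda|>1$; this should push the resolvent estimate to a curve of the form $\Gamma_n(\theta)=(1+\frac1n)e^{i\theta-c|\theta|^{a}}$ for a suitable exponent $a$. I would then split at $\theta_n$ exactly as in \eqref{defI1I2}:
\begin{itemize}
\item on $|\theta|\le\theta_n$, bound the resolvent by the RK estimate together with $|\Gamma_n(\theta)|-1\gtrsim\frac1n$;
\item on $|\theta|\ge\theta_n$, exploit the decay factor $e^{-nc|\theta|^{a}}$ and evaluate the integral with Lemma \ref{lem_gamma_estimates}.
\end{itemize}

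Carrying out this bookkeeping honestly, both pieces seem to reproduce the exponent $\alpha+\beta-k-1$ rather than $\beta$. Before investing in the curve argument, I would therefore first test the non-integer claim on the examples of Section 6, in particular the Borovykh--Spijker blocks. The reason is that the accompanying Table \ref{tabgrowth_rates} lists the $O(n^{\beta})$ bound at $k=\lfloor\alpha\rfloor$, not at $k=\lfloor\alpha-1\rfloor$. For $k=\lfloor\alpha\rfloor$ we have $\gamma\in(0,1)$, and the circle argument with the first case of Lemma \ref{LemmaIntRes} (bounded integral) gives $O(n^{\beta})$ immediately.

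If the test examples show that $n^{\beta}$ fails at $k=\lfloor\alpha-1\rfloor$, then case (2) for non-integer $\alpha$ cannot be proved as written. In that event I would report this discrepancy between the statement and the table rather than claim a proof.
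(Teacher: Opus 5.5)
Your argument for item (1) and for item (2) with $\alpha\in\N$ is exactly the paper's proof. It applies Riesz--Dunford on the circle $|\lambda|=1+\frac1n$, uses \eqref{abcondition} to produce the factor $n^{\beta}$, and applies Lemma~\ref{LemmaIntRes} with exponent $\gamma=\alpha-k$. In item (2) this is the logarithmic case $\gamma=1$, which needs $1+\frac1n<\frac32$, i.e.\ $n\ge 3$.

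Your suspicion about item (2) for $\alpha\notin\N$ is correct, and the defect lies in the statement, not in your argument: as printed it is false. Let $J=\begin{pmatrix}1&0\\1&1\end{pmatrix}$ on $\mathbb{C}^2$ and $N=J-I$. Then $N^2=0$ and $R(\lambda,J)=(\lambda-1)^{-1}I+(\lambda-1)^{-2}N$. For $1<|\lambda|<2$ one has $|\lambda-1|<3$ and $0<|\lambda|-1\le|\lambda-1|$, hence
\[
\norm{R(\lambda,J)}\le\frac{4}{|\lambda-1|^{2}}\le\frac{4}{|\lambda-1|^{3/2}(|\lambda|-1)^{1/2}}.
\]
So $J$ is $(\frac32,\frac12)$-RK with $\alpha+\beta=2>1$. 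Since $\lfloor\alpha-1\rfloor=0$, the printed claim would give $\norm{J^n}=O(n^{1/2})$. Yet $J^n=I+nN$ has $\norm{J^n}\ge n$, which is precisely the rate $n^{\alpha+\beta-k-1}$ of item (1), so item (1) is sharp here.

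The intended hypothesis is the one in Table~\ref{tabgrowth_rates}: $k=\lfloor\alpha\rfloor$ when $\alpha\notin\N$, or equivalently $k=\lceil\alpha-1\rceil$ in both subcases. Then $\gamma=\alpha-\lfloor\alpha\rfloor\in(0,1)$, the first case of Lemma~\ref{LemmaIntRes} bounds the integral uniformly in $n$, and $O(n^{\beta})$ follows in one line, as you already observed. That corrected version is all the paper's one-line proof actually delivers.

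So drop the shrinking-curve plan. By the counterexample, no argument can give $n^{\beta}$ at $k=\lfloor\alpha-1\rfloor$. Moreover, the Neumann-series argument behind Lemma~\ref{lemNev} only reaches inside $\overline{\D}$ when $\beta<1$, which is the content of Lemma~\ref{RKres}. Instead of leaving the case open pending tests, record the counterexample and prove the corrected statement.
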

	\begin{proof}
		By the Riesz-Dunford calculus, for $\rho > 1$, we have
		$$
		T^n(I-T)^k = \frac{1}{2i\pi} \int_{0}^{2\pi} \rho^{n+1} e^{i(n+1)t} (1-\rho e^{it})^k R(\rho e^{it},T) \, dt, \quad n \in \mathbb{N}.
		$$
		Applying the $(\alpha,\beta)$-RK condition \eqref{abcondition} yields
		$$
		\norm{T^n(I-T)^k} \le R_{\alpha,\beta}(T)\frac{\rho^{n+1}}{2\pi} \int_{0}^{2\pi} \frac{1}{|\rho e^{it}-1|^{\alpha - k}(\rho-1)^{\beta}} \, dt.
		$$
		Choosing $\rho = 1 + \frac{1}{n}$ and using Lemma \ref{LemmaIntRes},
		we obtain the desired result.
	\end{proof}
	
	For the regime $\beta < 1$, we are able to establish explicit estimates for $\norm{T^n(I-T)^k}$ when the integers satisfy $k > \alpha -1$, exploiting the fact that $T$ is of Ritt-type in this case. More precisely, according to \cite[Proposition 2.2]{MahRue}, we have the following result:
	\begin{lemma}\label{RKres}
		Let $T$ be an $(\alpha,\beta)$-RK operator on $X$ with $\beta < 1$ and $\alpha > 0$. Then $T$ is an $\alpha_{\beta}$-Ritt operator, where $\alpha_{\beta} := \frac{\alpha}{1 -\beta}$.
	\end{lemma}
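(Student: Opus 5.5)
The plan is to show directly that for $1<|\lambda|<2$,
$$
\norm{R(\lambda,T)} \le \frac{C'}{|\lambda-1|^{\alpha/(1-\beta)}}.
$$
The idea is to trade the factor $(|\lambda|-1)^{-\beta}$ against $|\lambda-1|$. We do this by moving from $\lambda$ outward radially to a point where $|\mu|-1$ is comparable to $|\mu-1|^{\alpha/(1-\beta)}$, and then coming back via the Neumann series \eqref{eqKR}. Set $\gamma:=\alpha_\beta=\alpha/(1-\beta)$. Note that $\alpha+\beta>1$ is not needed here; only $\beta<1$ and $\alpha>0$ are used. If $\alpha/(1-\beta)<1$, the bound still holds and the conclusion is read with $\gamma$ as given.

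\textbf{Step 1 (points far from the circle).} If $|\lambda|-1\ge \delta|\lambda-1|^{\gamma}$, with a small constant $\delta>0$ to be fixed later, then \eqref{abcondition} gives
$$
\norm{R(\lambda,T)}\le \frac{C}{|\lambda-1|^{\alpha}\,\delta^{\beta}|\lambda-1|^{\gamma\beta}}=\frac{C\delta^{-\beta}}{|\lambda-1|^{\gamma}},
$$
because $\alpha+\gamma\beta=\gamma$. This settles the region far from the circle.

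\textbf{Step 2 (points close to the circle).} Now suppose $0<|\lambda|-1<\delta|\lambda-1|^{\gamma}$. Put $\mu:=\lambda\bigl(1+\delta'|\lambda-1|^{\gamma}/|\lambda|\bigr)$, which lies on the same ray with $|\mu|-1=|\lambda|-1+\delta'|\lambda-1|^\gamma$. Here $\delta'\in[\delta,2\delta]$ is chosen so that $|\mu|<2$; for $\lambda$ near $2$ the first case applies anyway. Two facts then hold:
\begin{itemize}
\item[(a)] $|\mu-\lambda|\le 2\delta|\lambda-1|^\gamma$;
\item[(b)] $|\mu-1|\asymp|\lambda-1|$.
\end{itemize}
Fact (b) holds since $|\mu-\lambda|\le 2\delta |\lambda-1|^{\gamma}\le 2\delta 3^{\gamma-1}|\lambda-1|$ when $\gamma\ge1$, which is small relative to $|\lambda-1|$. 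If $\gamma<1$, the same comparison holds on the bounded set after adjusting constants, or the case is handled by the Kreiss-type exponential decay remark. Step 1 applied at $\mu$ gives $\norm{R(\mu,T)}\le C_\delta|\lambda-1|^{-\gamma}$, where $C_\delta=C''\delta^{-\beta}$.

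\textbf{Step 3 (Neumann series back to $\lambda$).} Expand
$$
R(\lambda,T)=\sum_{j\ge0}(\mu-\lambda)^jR(\mu,T)^{j+1}.
$$
This converges with norm at most $2\norm{R(\mu,T)}$ provided $|\mu-\lambda|\,\norm{R(\mu,T)}\le 1/2$. By (a) and Step 2,
$$
|\mu-\lambda|\,\norm{R(\mu,T)}\le 2\delta\, C''\delta^{-\beta}=2C''\delta^{1-\beta}.
$$

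The main obstacle is precisely this balancing act, and it is where $\beta<1$ is used. Since $\delta^{1-\beta}\to0$ as $\delta\to 0$, we can fix $\delta$ small enough that $2C''\delta^{1-\beta}\le1/2$, while the constant $C''$ (coming from the comparison (b)) stays independent of $\delta$ for $\delta\le1$. With this choice, $\norm{R(\lambda,T)}\le 2C_\delta|\lambda-1|^{-\gamma}$ in the close region as well.

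Combining the two regions gives the $\alpha_\beta$-Ritt estimate \eqref{alphaRittcondition} on $1<|\lambda|<2$. The spectral condition $\sigma(T)\subset\overline{\D}$ is already part of the hypothesis.
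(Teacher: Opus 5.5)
Your proof is correct and follows the standard argument behind the result the paper only cites from \cite[Proposition 2.2]{MahRue} (the paper gives no proof of its own): push $\lambda$ radially outward by $\delta|\lambda-1|^{\alpha_\beta}$, apply \eqref{abcondition} there, and return via the Neumann series \eqref{eqKR}, where $\beta<1$ makes $\delta^{1-\beta}$ small. This is the same mechanism as in the proof of Lemma \ref{lemNev}. One tidy-up: only the lower bound $|\mu-1|\ge|\lambda-1|$ is ever used, and it holds for every exponent because $r\mapsto|re^{i\theta}-1|$ is increasing on $r\ge 1$, so you can drop your hedging for $\alpha_\beta<1$; there the two-sided comparison (b) is in fact false, and appealing to the exponential-decay remark risks circularity.
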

	This allows us to derive the following bounds :
	
	\begin{thm}\label{abRK}
		Let $\alpha>0$ and $0 \le \beta < 1$ such that $\alpha + \beta > 1$. Let $T \in B(X)$ be an $(\alpha,\beta)$-RK operator. For any $k > \alpha - 1$, we have :
		$$
		\norm{T^n(I-T)^k} = O\Big(n^{\frac{\alpha+(\beta-1)(k+1)}{\alpha}}\Big).
		$$
		
	\end{thm}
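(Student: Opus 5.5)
We follow the scheme of the proof of Theorem \ref{Nevanlinna}: a contour pushed inside the disc away from $1$, with the $(\alpha,\beta)$-RK condition replacing Kreiss boundedness near $1$. By Lemma \ref{RKres}, $T$ is an $\alpha_\beta$-Ritt operator with $\alpha_\beta = \frac{\alpha}{1-\beta}$. Since $\alpha+\beta>1$, we have $\alpha_\beta>1$. Lemma \ref{lemNev} (with $\alpha_\beta$ in place of $\alpha$) then gives a curve
$$
\Gamma(\theta)=e^{i\theta-c|\theta|^{\alpha_\beta}}
$$
such that $\|R(\lambda,T)\|\le C|\lambda-1|^{-\alpha_\beta}$ on $\Omega$. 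We set $\Gamma_n=(1+\frac1n)\Gamma$ and write
$$
T^n(I-T)^k=\frac{1}{2i\pi}\int_{\Gamma_n}\lambda^n(1-\lambda)^kR(\lambda,T)\,d\lambda .
$$
Exactly as before, this bounds $\|T^n(I-T)^k\|$ by $C\int_{-\pi}^{\pi}e^{-nc|\theta|^{\alpha_\beta}}|1-\Gamma_n(\theta)|^k\|R(\Gamma_n(\theta),T)\|\,d\theta$. We split at $\theta_n=(c/n)^{1/\alpha_\beta}$ into $I_1+I_2$.

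On the outer region $\theta_n\le|\theta|\le\pi$ we use the $\alpha_\beta$-Ritt bound together with \eqref{estGamma_nmin1} and \eqref{ineremoveass}. This gives
$$
I_2\le C\int_{\theta_n}^{\infty}e^{-nc\theta^{\alpha_\beta}}\theta^{k-\alpha_\beta}\,d\theta\le Cn^{-\frac{1+k-\alpha_\beta}{\alpha_\beta}}
$$
by Lemma \ref{lem_gamma_estimates}. Since $\frac{1}{\alpha_\beta}=\frac{1-\beta}{\alpha}$, the exponent equals
$$
1-\frac{(k+1)(1-\beta)}{\alpha}=\frac{\alpha+(\beta-1)(k+1)}{\alpha},
$$
which is exactly the target rate.

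On the inner region $|\theta|\le\theta_n$ we have $|\Gamma_n(\theta)|-1\ge\frac{1-2c^2}{n}$, and $|\Gamma_n(\theta)|<2$. So \eqref{abcondition} applies there, giving
$$
\|R(\Gamma_n(\theta),T)\|\le Cn^{\beta}|\Gamma_n(\theta)-1|^{-\alpha}.
$$
We also need the lower bound $|\Gamma_n(\theta)-1|\ge c'(|\theta|+\frac1n)$ on this region. It holds because the real part of $\Gamma_n(\theta)-1$ is at least of order $\frac1n$ by the modulus bound (once $\theta_n$ is small, as the small angle keeps $\Gamma_n(\theta)$ near the positive axis), and the imaginary part is of order $|\theta|$. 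Combining with \eqref{estGamma_nmin1}, we get
$$
I_1\le Cn^{\beta}\int_{0}^{\theta_n}\Big(\theta+\frac1n\Big)^{k-\alpha}d\theta .
$$
The hypothesis $k>\alpha-1$ is used here: the exponent $k-\alpha+1$ is positive, so the integral is at most $C(\theta_n+\frac1n)^{k-\alpha+1}$. This is of order $n^{-(k-\alpha+1)/\alpha_\beta}$, because $\alpha_\beta>1$ implies $\theta_n\gg\frac1n$. Hence
$$
I_1\le Cn^{\beta-\frac{(k+1-\alpha)(1-\beta)}{\alpha}} .
$$
Expanding, this exponent is
$$
\beta-\frac{(k+1)(1-\beta)}{\alpha}+1-\beta=\frac{\alpha+(\beta-1)(k+1)}{\alpha},
$$
which matches $I_2$. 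Summing the two bounds gives the claim.

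The main obstacle is the inner estimate: we must check that the lower bound $|\Gamma_n(\theta)-1|\gtrsim|\theta|+\frac1n$ holds uniformly there. We must also check that the exponents from the two regions coincide exactly; they balance precisely because $\theta_n=(c/n)^{(1-\beta)/\alpha}$. A secondary point is that Lemma \ref{lemNev} needs $\alpha_\beta>1$, which is guaranteed by $\alpha+\beta>1$.
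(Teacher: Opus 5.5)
Your proposal is correct and follows the paper's proof essentially step for step: Lemma \ref{RKres} to pass to $\alpha_\beta$-Ritt, the contour $\Gamma_n=(1+\frac1n)e^{i\theta-c|\theta|^{\alpha_\beta}}$ split at $\theta_n=(c/n)^{1/\alpha_\beta}$, the $\alpha_\beta$-Ritt bound with Lemma \ref{lem_gamma_estimates} outside, and the $(\alpha,\beta)$-RK bound with $|\Gamma_n(\theta)|-1\gtrsim\frac1n$ inside; the only cosmetic difference is that you integrate $(\theta+\frac1n)^{k-\alpha}$ directly over $[0,\theta_n]$ instead of invoking Lemma \ref{lem_gamma_estimates2}. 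One justification needs repair: your real-part argument fails for $\alpha_\beta>2$, because then $\theta_n^2\gg\frac1n$ and $\operatorname{Re}\Gamma_n(\theta)-1$ can be negative near $\theta_n$; the bound $|\Gamma_n(\theta)-1|\gtrsim|\theta|+\frac1n$ still holds via $|\rho e^{i\theta}-1|^2=(\rho-1)^2+4\rho\sin^2(\theta/2)$, and in any case the paper's weaker bound $|1-\Gamma_n(\theta)|\ge|\theta|/\pi$ already suffices since $k-\alpha>-1$.
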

	
	To establish the Theorem \ref{abRK}, we require the following estimate.
	
	\begin{lemma}\label{lem_gamma_estimates2}
		If $b > -1$ and $a > 1$, then there exists a constant $C$ depending only on $a,b$ and $d$ such that
		\begin{equation}\label{esti1fungamma_sec3}
			\int_{0}^{\infty} e^{-n d \theta^{a}} \Big(\theta+\frac1n\Big)^{b} \, d\theta \le C n^{-\frac{1+b}{a}}, \quad n \in \mathbb{N},
		\end{equation}
		where $C$ depends only on $a, b$, and $d$.
	\end{lemma}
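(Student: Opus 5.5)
I would split the integral at the natural scale $\theta = n^{-1/a}$, or equivalently at $\theta = 1/n$ versus larger $\theta$, and reduce everything to the Gamma-type bound \eqref{esti3fungamma} already available in the remark after Lemma \ref{lem_gamma_estimates}. The guiding observation is that for $a>1$ we have $1/n \le n^{-1/a}$. On the scale where the exponential factor matters, the shift $\frac1n$ in $\theta + \frac1n$ is therefore negligible compared with $\theta$, and on the small region near $0$ it only contributes a short interval.

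\textbf{Case $b \ge 0$.} I would use the elementary inequality $(\theta + \frac1n)^b \le 2^{\max(b-1,0)}\big(\theta^b + n^{-b}\big)$. Applying \eqref{esti3fungamma} with exponent $b$ gives
$$\int_0^\infty e^{-nd\theta^a}\theta^b\,d\theta \le C n^{-\frac{1+b}{a}}.$$
Applying it with exponent $0$ gives
$$n^{-b}\int_0^\infty e^{-nd\theta^a}\,d\theta \le C n^{-b-\frac1a}.$$
Since $a>1$, we have $b \ge b/a$, so $n^{-b-\frac1a} \le n^{-\frac{1+b}{a}}$, and the estimate follows.

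\textbf{Case $-1<b<0$.} Here the function $(\theta+\frac1n)^b$ is decreasing, and I would bound it by $\theta^b$. Then \eqref{esti3fungamma} applies directly, because $b>-1$ makes $\theta^b$ integrable at $0$. This gives $C n^{-\frac{1+b}{a}}$ immediately.

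There is no serious obstacle here. The only point needing care is the exponent comparison in the case $b\ge0$, and this is exactly where the hypothesis $a>1$ is used.
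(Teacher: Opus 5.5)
Your proof is correct and is essentially the paper's argument. The paper substitutes $t = nd\theta^{a}$ and then splits $(1+x)^{b}\le C_b(1+x^{b})$, which is your split $(\theta+\frac1n)^{b}\le C(\theta^{b}+n^{-b})$ followed by the same two Gamma-type integrals and the same exponent comparison $n^{-b(1-1/a)}\le 1$ (i.e.\ $b\ge b/a$, using $a>1$). Your monotonicity argument for $-1<b<0$ is a genuine improvement, since the paper's written step assumes $b>0$, and the factor $n^{-b(1-1/a)}$ is unbounded when $b<0$.
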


	\begin{proof}
		Applying the change of variables $t = n d \theta^{a}$, the integral can be rewritten as
		$$
		\int_{0}^{\infty} e^{-n d \theta^{a}} \Big(\theta+\frac1n\Big)^{b} \, d\theta
		= \frac{1}{a} (n d)^{-\frac{b+1}{a}} \int_{0}^{\infty} e^{-t} t^{\frac{b+1}{a} - 1} \left(1 + n^{-\left(1 - \frac{1}{a}\right)} d^{\frac{1}{a}} t^{-\frac{1}{a}}\right)^{b} \, dt.
		$$
		Next, we use the property $(1+x)^b \le C_b(1+x^b)$ for $x \ge 0$, where $C_b := \max(1,2^{b-1})$. Furthermore, since $a>1$ and $b>0$, we have $b\left(1 - \frac{1}{a}\right) > 0$, which immediately implies that $n^{-b\left(1 - \frac{1}{a}\right)} \le 1$ for all $n \in \mathbb{N}$. Applying these properties, we can bound the integral on the right-hand side uniformly with respect to $n$:
		\begin{align*}
			\int_{0}^{\infty} e^{-t} t^{\frac{b+1}{a} - 1} \left(1 + n^{-\left(1 - \frac{1}{a}\right)} d^{\frac{1}{a}} t^{-\frac{1}{a}}\right)^{b} \, dt 
			&\le C_b \int_{0}^{\infty} e^{-t} t^{\frac{b+1}{a} - 1} \left(1 + n^{-b\left(1 - \frac{1}{a}\right)} d^{\frac{b}{a}} t^{-\frac{b}{a}}\right) \, dt  \\
			&\le C_b \int_{0}^{\infty} e^{-t} t^{\frac{b+1}{a} - 1} \left(1 + d^{\frac{b}{a}} t^{-\frac{b}{a}}\right) \, dt \\
			&= C_b \int_{0}^{\infty} e^{-t} t^{\frac{b+1}{a} - 1} \, dt + C_b d^{\frac{b}{a}} \int_{0}^{\infty} e^{-t} t^{\frac{1}{a} - 1} \, dt.
		\end{align*}
		
		Since $\frac{b+1}{a} > 0$ and $\frac{1}{a} > 0$, the exponents satisfy $\frac{b+1}{a}-1 > -1$ and $\frac{1}{a} - 1 > -1$. Thus, both integrals on the right-hand side converge, which yields \eqref{esti1fungamma_sec3}.
	\end{proof}

	\begin{proof}[Proof of Theorem \ref{abRK}]
		Let $k \in \mathbb{N} \cup \{0\}$.
		
		Assume that $k > \alpha - 1$. Since $T$ is an $(\alpha,\beta)$-RK operator with $\beta < 1$, Lemma \ref{RKres} ensures that $T$ is an $\alpha_{\beta}$-Ritt operator with $\alpha_{\beta} = \frac{\alpha}{1 -\beta}$. Following the proof of Theorem \ref{Nevanlinna}, we define $\Gamma_n := \big(1+\frac{1}{n}\big) e^{i\theta - c|\theta|^{\alpha_{\beta}}}$ for a sufficiently small parameter $c > 0$. This yields
		$$
		\norm{T^n(I-T)^k} \le C \int_{-\pi}^{\pi} |\Gamma'_n(\theta)| |1-\Gamma_n(\theta)|^k \norm{R(\Gamma_n(\theta),T)} \, d\theta =: I_1 + I_2,
		$$
		where the integrals $I_1$ and $I_2$ are partitioned at $\theta_n := \big(\frac{c}{n}\big)^{1/\alpha_{\beta}}$, analogously to \eqref{defI1I2}. Since $T$ is $\alpha_{\beta}$-Ritt, the same arguments used in the proof of Theorem \ref{Nevanlinna} show that the integral $I_2$ satisfies
		$$
		I_2 \le C_2 n^{-\frac{1+k-\alpha_{\beta}}{\alpha_{\beta}}} = C_2 n^{\frac{\alpha_{\beta}-k-1}{\alpha_{\beta}}} = C_2 n^{\frac{\alpha+(\beta-1)(k+1)}{\alpha}},
		$$
		where $C_2$ depends on $\alpha, \beta,c$ and $k$.
		To estimate the integral $I_1$, we invoke the $(\alpha,\beta)$-RK condition. For $0 \le |\theta| \le \theta_n$, the contour satisfies $|\Gamma_n(\theta)| > 1$ and $|\Gamma_n(\theta)| - 1 \ge \frac{1-2c^2}{n}$. Consequently,
		\begin{align*}
			I_1 &\le C \int_{0 \le |\theta| \le \theta_n} e^{-nc|\theta|^{\alpha_{\beta}}} \frac{|1-\Gamma_n(\theta)|^{k-\alpha}}{(|\Gamma_n(\theta)|-1)^{\beta}} \, d\theta \\
			&\le C' n^{\beta} \int_{0 \le |\theta| \le \theta_n} e^{-nc|\theta|^{\alpha_{\beta}}} |1-\Gamma_n(\theta)|^{k-\alpha} \, d\theta,
		\end{align*}
		where $C'$ depends on $\alpha, \beta, c$, and $k$. 
		
		If $k - \alpha \ge 0$, we apply the estimate \eqref{estGamma_nmin1} and Lemma \ref{lem_gamma_estimates2} to obtain
		$$
		I_1 \le C' n^{\beta} \int_{0 \le |\theta| \le \theta_n} e^{-nc|\theta|^{\alpha_{\beta}}} \Big(|\theta|+\frac{1}{n}\Big)^{k-\alpha} \, d\theta \le C_1 n^{\beta - \frac{k+1-\alpha}{\alpha_{\beta}}}.
		$$
		If $-1 < k - \alpha < 0$, we utilize the lower bounds \eqref{ine12eitheta} along with \eqref{esti3fungamma} to find
		$$
		I_1 \le C' n^{\beta} \int_{0 \le |\theta| \le \theta_n} e^{-nc|\theta|^{\alpha_{\beta}}} |\theta|^{k-\alpha} \, d\theta \le C_1 n^{\beta - \frac{k+1-\alpha}{\alpha_{\beta}}},
		$$
		where $C_1$ depends on $\alpha, \beta, c$, and $k$.
		Observing that
		\begin{align*}
			\beta - \frac{k+1-\alpha}{\alpha_{\beta}} = \frac{\alpha + (\beta - 1)(k+1)}{\alpha},
		\end{align*}
		we conclude that $I_1 \le C_1 n^{\frac{\alpha + (\beta - 1)(k+1)}{\alpha}}$, completing the proof of the third item.
	\end{proof}

	\begin{remark}\label{rkcompar}
		\begin{enumerate}
			\item Setting $\beta = 0$ in Theorem \ref{abRK} provides the rates for an $\alpha$-Ritt operator with $\alpha > 1$:
			$$
			\norm{T^n(I-T)^k} = \begin{cases}
				O(n^{\alpha -k-1}), & \text{if } k < \alpha - 1, \\
				O(\log(n+1)), & \text{if } k = \alpha - 1, \\
				O(n^{\frac{\alpha - k-1}{\alpha}}), & \text{if } k > \alpha - 1.
			\end{cases}
			$$
			In particular, for $1 < \alpha < 2$, an $\alpha$-Ritt operator satisfies
			$$
			\norm{T^n(I-T)^k} =  \begin{cases}
				O(n^{\alpha-1}), & \text{if } k=0, \\
				O(n^{\frac{\alpha - k-1}{\alpha}}), & \text{if } k \in \mathbb{N}.
			\end{cases}
			$$
			This clarifies the link with Theorem \ref{Nevanlinna}: in this case, adding the Kreiss boundedness assumption only improves the rate at the boundary $k=0$, where it reduces the power growth from $O(n^{\alpha-1})$ down to $\norm{T^n} = O(n^{\frac{\alpha-1}{\alpha}})$.
			
			\item In the regime where $0 < \alpha, \beta < 1$ and $\alpha + \beta > 1$, Theorem \ref{abRK} yields for all $k \in \mathbb{N} \cup \{0\}$:
			$$
			\norm{T^n(I-T)^k} = O\big(n^{\frac{\alpha + (\beta-1)(k+1)}{\alpha}}\big).
			$$
			In particular, for $k=0$, our bound reads
			\begin{equation}\label{abcase0}
				\norm{T^n} = O(n^{\frac{\alpha+\beta -1}{\alpha}}).
			\end{equation}
			This improves the bounds available in the literature. For instance, in \cite[Corollary 3.3, Case 4]{MahRue}, the authors obtained $\norm{T^n} = O\big(n^{\frac{\alpha + \beta -1}{\alpha}}\log(n+1)\big)$ for $\alpha \in \mathbb{N}^{-1}:= \big\{\frac{1}{m}, \; m\in \N \}$, and, for $\alpha \notin \N^{-1}$,
			$$ 
			\norm{T^n} = \begin{cases} 
				O\Big(n^{(\alpha + \beta -1)(1+\lfloor\frac{1}{\alpha}\rfloor)}\Big), & \text{if } \frac{1}{\alpha}- \lfloor\frac{1}{\alpha}\rfloor \ge \frac{\alpha+\beta-1}{\alpha}, \\
				O\Big(n^{\lfloor\frac{1}{\alpha}\rfloor(\beta-1)+1}\Big), & \text{if }  \frac{1}{\alpha}- \lfloor \frac{1}{\alpha}\rfloor < \frac{\alpha+\beta-1}{\alpha}.
			\end{cases}
			$$
			Our bound \eqref{abcase0} unifies and improves these estimates for all parameters.
		\end{enumerate}
	\end{remark}

	Let $\alpha > 1$. If $T$ is $\alpha$-Ritt and Kreiss bounded, then for $1 < |\lambda| \le 2$ and any $\gamma \in (0,1)$, we have
	$$
	\norm{R(\lambda, T)} = \norm{R(\lambda, T)}^{\gamma} \norm{R(\lambda, T)}^{1-\gamma} \le \frac{C}{(|\lambda|-1)^{\gamma}|\lambda-1|^{\alpha(1-\gamma)}}.
	$$
	It follows that $T$ is $(\alpha(1-\gamma), \gamma)$-RK bounded for any $\gamma \in (0,1)$. 
	Moreover, if $T$ is $(\alpha(1-\gamma),\gamma)$-RK bounded for some $\gamma \in (0,1)$, then Lemma \ref{RKres} implies that $T$ is $\alpha$-Ritt. Consequently, for all $\gamma \in (0,1)$, we have the following chain of implications:
	\begin{equation}\label{implication}
		\text{  $\alpha$-Ritt and Kreiss bounded} \implies \text{ $(\alpha(1-\gamma),\gamma)$-RK bounded} \implies  \text{ $\alpha$-Ritt}.
	\end{equation}    
	In view of Remark \ref{abcase0} and the discussion above, we can provide a weaker condition than being both $\alpha$-Ritt and Kreiss bounded that still ensures the rate $\norm{T^n(I-T)^k} = O\big(n^{\frac{\alpha-k-1}{\alpha}}\big)$.
	
	\begin{cor}\label{CorEstalphaRitt}
		Let $\alpha > 1$ and let $\gamma \in (0,1)$ satisfy $\gamma > \frac{\alpha -1}{\alpha}$. Assume that $T$ is $(\alpha(1-\gamma), \gamma)$-RK bounded. Then for each $k \in \mathbb{N} \cup \{0\}$, 
		$$
		\norm{T^{n}(I-T)^k} = O\big(n^{\frac{\alpha-k-1}{\alpha}}\big).
		$$
	\end{cor}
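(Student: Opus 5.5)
The plan is to apply the two versions of Theorem \ref{abRK} with the parameters $\alpha' := \alpha(1-\gamma)$ and $\beta' := \gamma$, and then check that each resulting exponent is at most $\frac{\alpha-k-1}{\alpha}$. We have $\alpha' > 0$ and $0 < \beta' < 1$. Moreover $\alpha' + \beta' = \alpha - \gamma(\alpha - 1) > 1$, because $\gamma < 1$. So the hypotheses of both versions of Theorem \ref{abRK} hold.

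The key observation is that the assumption $\gamma > \frac{\alpha-1}{\alpha}$ is equivalent to $\alpha' = \alpha(1-\gamma) < 1$, that is, $\alpha' - 1 < 0$. Hence every $k \in \N \cup \{0\}$ satisfies $k > \alpha' - 1$. Only the second version of Theorem \ref{abRK} (the one for $0 \le \beta < 1$ and $k > \alpha - 1$) is then needed. It gives
$$
\norm{T^n(I-T)^k} = O\Big(n^{\frac{\alpha' + (\beta'-1)(k+1)}{\alpha'}}\Big).
$$

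Next we compute this exponent:
$$
\frac{\alpha(1-\gamma) - (1-\gamma)(k+1)}{\alpha(1-\gamma)} = \frac{\alpha - k - 1}{\alpha}.
$$
This is exactly the claimed rate. Equivalently, Lemma \ref{RKres} gives $\alpha'_{\beta'} = \frac{\alpha'}{1-\beta'} = \alpha$, so $T$ is $\alpha$-Ritt, and the rate is the one attached to $\alpha_\beta = \alpha$.

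No step here is a serious obstacle. The only points needing care are that the regime restriction $k > \alpha' - 1$ in Theorem \ref{abRK} covers all $k$, including $k = 0$, which is exactly where $\gamma > \frac{\alpha-1}{\alpha}$ is used, and that $\beta' < 1$ and $\alpha' + \beta' > 1$ hold.
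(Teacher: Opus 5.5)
Your proposal is correct and follows essentially the same route as the paper. The corollary is obtained by applying the $\beta<1$ version of Theorem~\ref{abRK} to the pair $(\alpha(1-\gamma),\gamma)$; there $\gamma>\frac{\alpha-1}{\alpha}$ forces $\alpha(1-\gamma)<1$, so every $k\ge 0$ satisfies $k>\alpha(1-\gamma)-1$ (the regime $0<\alpha,\beta<1$, $\alpha+\beta>1$ of Remark~\ref{rkcompar}), and the exponent simplifies to $\frac{\alpha-k-1}{\alpha}$ exactly as you computed.
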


	\begin{remark}
		If $1 \le \alpha < 2$ and $0 < \beta < 1$, it follows from the Theorem \ref{abRK} (with $k=1$) that if $T$ is $(\alpha,\beta)$-RK, then
		$$
		\norm{T^n(I-T)} = O\Big(n^{\frac{\alpha+2\beta -2}{\alpha}}\Big). 
		$$
		In \cite[Theorem 3.4] {MahRue}, the best bounds they obtain for this case are:
		$$
		\norm{T^n(I-T)} = \left\{
		\begin{array}{ll}
			O\big(n^{2\beta-1}\log(n+1)\big) & \mbox{if } \alpha = 1  \\
			O(n^{\beta}) & \mbox{if } 1<\alpha<2 \text{ and }  3-2\alpha\le\beta <1\\
			O(n^{2\alpha+2\beta-3}) & \mbox{if } 1<\alpha<2 \text{ and } 0<\beta < 3-2\alpha\\ 
		\end{array}
		\right.
		$$
		In all cases, our bounds are better, as we have the following comparisons: For $1< \alpha<2$ and $0<\beta <1$  
		$$
		\frac{\alpha+2\beta-2}{\alpha} - \beta = \Big(\frac{2}{\alpha}-1 \Big)(\beta-1) < 0
		$$
		and 
		\begin{align*}
			\frac{\alpha+2\beta-2}{\alpha} - (2\alpha+2\beta-3) =\frac{-2(\alpha -1)(\alpha+\beta-1)}{\alpha}  < 0.
		\end{align*}
		Finally, in the case $\alpha = 1$, the power rate in $n$ is the same as in \cite{MahRue}, but our estimate removes the logarithmic term.
	\end{remark}

	To conclude this section, we illustrate our results for the specific case $\alpha = 2$. Recall that from \eqref{implication}, for any $\gamma \in (0,1)$, we have the hierarchy:
	$$
	T \text{ is $2$-Ritt and Kreiss bounded} \implies T \text{ is $(2(1-\gamma),\gamma)$-RK bounded} \implies T \text{ is $2$-Ritt}.
	$$
	According to Theorem \ref{Nevanlinna} and Theorem \ref{abRK}, the asymptotic behavior of $\norm{T^n(I-T)^k}$ can be categorized as follows:
	\begin{itemize}
		\item \textbf{Case 1:} If $T$ is $2$-Ritt and Kreiss bounded, or if $T$ is $(2(1-\gamma),\gamma)$-RK bounded for $\frac{1}{2} < \gamma < 1$, then
		$$
		\norm{T^n(I-T)^k} = O\big( n^{\frac{1-k}{2}} \big), \quad k \in \mathbb{N} \cup \{0\}.
		$$
		
		\item \textbf{Case 2:} If $T$ is $(1,1/2)$-RK, the rate at the boundary $k=0$ involves a logarithmic factor:
		$$
		\norm{T^n(I-T)^k} = 
		\begin{cases}
			O(n^{1/2}\log(n+1)), & \text{if } k = 0, \\
			O\big( n^{\frac{1-k}{2}} \big), & \text{if } k \in \mathbb{N}.
		\end{cases}
		$$
		
		\item \textbf{Case 3:} If $T$ is $(2(1-\gamma),\gamma)$-RK with $0 < \gamma < \frac{1}{2}$, then since $2(1-\gamma) > 1$ and $2(1-\gamma) + \gamma - 1 = 1-\gamma$, we find:
		$$
		\norm{T^n(I-T)^k} = 
		\begin{cases}
			O(n^{1-\gamma}), & \text{if } k = 0, \\
			O\big( n^{\frac{1-k}{2}} \big), & \text{if } k \in \mathbb{N}.
		\end{cases}
		$$
		
		\item \textbf{Case 4:} Finally, if $T$ is merely a $2$-Ritt operator, the estimates are less restrictive:
		$$
		\norm{T^n(I-T)^k} = 
		\begin{cases}
			O(n), & \text{if } k = 0, \\ 
			O(\log(n+1)), & \text{if } k = 1, \\
			O\big( n^{\frac{1-k}{2}} \big), & \text{if } k \in \mathbb{N} \setminus \{1\}.
		\end{cases}
		$$
	\end{itemize}

	
	\section{Focus on $\beta$-Kreiss bounded operator}\label{SecBetaKreiss}

	We recall that a $\beta$-Kreiss bounded operator is a $(0,\beta)$-RK operator; that is, there exists a constant $C>0$ such that
	\begin{equation}\label{betaKreiss}
		\norm{R(\lambda,T)} \le \frac{C}{(|\lambda|-1)^{\beta}}, \quad 1 < |\lambda| < 2.
	\end{equation}
	The smallest constant $C$ satisfying \eqref{betaKreiss} is denoted by $K_{\beta}(T)$. The case $\beta < 1$ is of less interest in this context as it implies $\norm{T^n} = O(e^{-\omega n})$ for some $\omega >0$. The case $\beta = 1$ corresponds to the classical Kreiss boundedness condition.
	
	By Theorem \ref{abRK}, we know that if $T$ is $\alpha$-Ritt for $\alpha >1$, then 
	\begin{equation*}
		\norm{T^n(I-T)^k} = O(n^{\frac{\alpha - k-1}{\alpha}}), \quad \text{for } k > \alpha -1.
	\end{equation*}
	When $T$ is further assumed to be Kreiss bounded ($\beta=1$), this bound also holds for integers $k \le \alpha - 1$. This leads us to investigate the growth of $\norm{T^n(I-T)^k}$ for $k < \alpha - 1$ when $T$ is both $\alpha$-Ritt and $\beta$-Kreiss bounded with $\beta > 1$. Since $\alpha$-Ritt boundedness implies $\alpha$-Kreiss boundedness, we naturally assume $1 < \beta < \alpha$.
	
	\begin{prop}\label{PropbetaKreissalphaRitt}
		Let $\alpha >1$ and $1 < \beta < \alpha$. Suppose $T$ is $\alpha$-Ritt and $\beta$-Kreiss bounded. Then for any $k\in \mathbb{N}$ such that $k < \alpha - 1$, 
		$$
		\norm{T^n(I-T)^k} = O\left(n^{\beta \frac{\alpha - k - 1}{\alpha}}\right).
		$$
	\end{prop}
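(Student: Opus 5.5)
We follow the proof of Theorem \ref{Nevanlinna} line by line, replacing Kreiss boundedness by $\beta$-Kreiss boundedness on the arc near $1$. Fix $c>0$ small as in Lemma \ref{lemNev}, and take the contour $\Gamma_n := \big(1+\frac{1}{n}\big)\Gamma_{c,\alpha}$. The Dunford--Riesz calculus then gives
\[
\norm{T^n(I-T)^k}\le C\int_{-\pi}^{\pi} e^{-nc|\theta|^{\alpha}}|1-\Gamma_n(\theta)|^k\,\norm{R(\Gamma_n(\theta),T)}\,d\theta .
\]
The difference from Theorem \ref{Nevanlinna} is the cut point. We will not cut at $(c/n)^{1/\alpha}$ but at a larger threshold $\theta_n$, chosen to balance the two estimates.

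\textbf{Bound on the inner arc.} On $|\theta|\le\theta_n$ we want $|\Gamma_n(\theta)|-1\gtrsim \frac1n$ so that \eqref{betaKreiss} applies. This holds when $nc\theta_n^{\alpha}\le$ a small constant, i.e.\ $\theta_n\lesssim n^{-1/\alpha}$. With this choice, and using \eqref{estGamma_nmin1}, the inner piece is at most
\[
C n^{\beta}\theta_n^{k+1}=C\,n^{\beta-\frac{k+1}{\alpha}},
\]
which is worse than the target exponent $\beta\frac{\alpha-k-1}{\alpha}$. So the contour $\Gamma_n$ with radius factor $1+\frac1n$ is too crude.

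\textbf{Modified contour.} We instead let the radius factor be $1+\frac1m$, with $m$ a free parameter, and take $\Gamma_m:=(1+\frac1m)\Gamma_{c,\alpha}$. Then $|\Gamma_m|^n\le e^{n/m}e^{-nc|\theta|^\alpha}$. On the inner arc $|\theta|\le (c/m)^{1/\alpha}$, $\beta$-Kreiss boundedness gives the bound
\[
e^{n/m}\,m^{\beta}\,m^{-\frac{k+1}{\alpha}} .
\]
On the outer arc, the $\alpha$-Ritt estimate of Lemma \ref{lemNev} gives
\[
e^{n/m}\int_{(c/m)^{1/\alpha}}^{\pi} e^{-nc\theta^\alpha}\theta^{k-\alpha}\,d\theta .
\]
Since $k-\alpha<-1$, this integral is dominated by its lower endpoint and is $O\big(m^{\frac{\alpha-k-1}{\alpha}}\big)$. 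Hence both pieces are at most $e^{n/m}\,m^{\beta\frac{\alpha-k-1}{\alpha}}$ at best. Choosing $m=n$ gives only $n^{\beta-\frac{k+1}{\alpha}}$ on the inner arc, so this balancing fails and a different idea is needed.

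\textbf{Interpolation of the resolvent (main route).} We mimic \eqref{implication}. For $1<|\lambda|<2$ and $\gamma\in(0,1)$,
\[
\norm{R(\lambda,T)}\le \frac{C}{|\lambda-1|^{\alpha(1-\gamma)}(|\lambda|-1)^{\beta\gamma}} .
\]
So $T$ is $(\alpha(1-\gamma),\beta\gamma)$-RK. Choose $\gamma$ so that $\alpha(1-\gamma)>k+1$ is not an integer, and apply Theorem \ref{abRK}(1):
\[
\norm{T^n(I-T)^k}=O\big(n^{\alpha(1-\gamma)+\beta\gamma-k-1}\big).
\]
The exponent $\alpha+\gamma(\beta-\alpha)-k-1$ decreases in $\gamma$. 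Its infimum over the admissible range is approached as $\alpha(1-\gamma)\downarrow k+1$, i.e.\ $\gamma\uparrow\frac{\alpha-k-1}{\alpha}$, where the exponent equals $\beta\frac{\alpha-k-1}{\alpha}$.

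\textbf{Endpoint (main obstacle).} At the endpoint itself, Theorem \ref{abRK}(2) costs a logarithm. So I would exactly redo the proof of Theorem \ref{abRK} at $\gamma_0=\frac{\alpha-k-1}{\alpha}$. In that case
\[
\int_{-\pi}^{\pi}|\rho e^{it}-1|^{k-\alpha(1-\gamma_0)}\,dt=\int_{-\pi}^{\pi}|\rho e^{it}-1|^{-1}\,dt\sim\log n .
\]
To remove this logarithm, use the minimum of the two resolvent bounds pointwise, $\min\{(\rho-1)^{-\beta},|\lambda-1|^{-\alpha}\}$, rather than a fixed $\gamma$. Then
\[
\int_{-\pi}^{\pi} |\lambda-1|^k\min\{n^\beta,|\lambda-1|^{-\alpha}\}\,dt .
\]
Split at $|t|\approx n^{-\beta/\alpha}$. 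The inner part gives $n^\beta\cdot n^{-\beta(k+1)/\alpha}$, and the outer part gives $\int_{n^{-\beta/\alpha}}t^{k-\alpha}\,dt\sim n^{\beta(\alpha-k-1)/\alpha}$ because $k-\alpha<-1$. Both equal $n^{\beta\frac{\alpha-k-1}{\alpha}}$, with no logarithm. The one point to check is that the $\alpha$-Ritt bound is available on the circle $|\lambda|=1+\frac1n$, which holds by definition since $1<|\lambda|<2$.
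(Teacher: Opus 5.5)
Your final paragraph is a complete and correct proof, and its core is the paper's argument. The paper also cuts at $\tilde\theta_n=c\,n^{-\beta/\alpha}$, uses $\beta$-Kreiss boundedness (the factor $n^{\beta}$) on the inner arc and the $\alpha$-Ritt bound on the outer arc, and gets $n^{\beta\frac{\alpha-k-1}{\alpha}}$ from each piece.

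The difference is the contour. The paper keeps Nevanlinna's curve $\Gamma_n=(1+\frac1n)\Gamma_{c,\alpha}$. Part of that curve lies inside the unit disc, so the paper needs Lemma \ref{lemNev} to control $R(\lambda,T)$ there. You integrate over the circle $|\lambda|=1+\frac1n$, where both resolvent hypotheses apply directly by definition. This is legitimate and slightly more elementary. The paper discards the factor $e^{-nc|\theta|^{\alpha}}$ in both $I_1$ and $I_2$ anyway, so the bending buys nothing here. It only matters in Theorem \ref{Nevanlinna} for $k\ge\alpha-1$, where $\int\theta^{k-\alpha}\,d\theta$ is no longer governed by its lower endpoint.

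When you write it up, state two steps explicitly. On the inner arc, $|\rho e^{it}-1|\lesssim n^{-\beta/\alpha}$ uses $\frac1n\le n^{-\beta/\alpha}$, which is where $\beta<\alpha$ enters. On the outer arc, you need $|\rho e^{it}-1|\ge|t|/\pi$ (from \eqref{ine12eitheta} and \eqref{inesin}) combined with $k-\alpha<0$.

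Discard the exploratory sections. Your claim there that the radius factor $1+\frac1n$ is ``too crude'' is false: your own final argument uses that radius. Your first attempt failed only because you put the cut at the largest admissible value $n^{-1/\alpha}$, and you even announced a \emph{larger} threshold. The smaller cut $n^{-\beta/\alpha}$ is what is needed, and it works on $\Gamma_n$ too. The interpolation step, which loses a factor $n^{\varepsilon}$, is also unnecessary.
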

	
	\begin{proof}
		Let $k < \alpha - 1$, so that $k-\alpha < -1$. Following the  method from Theorem \ref{Nevanlinna}, we write:
		\begin{align*}\label{defI1I2bis}
			\nonumber \int_{ -\pi}^{\pi} |\Gamma'_n(\theta)||1-\Gamma_n(\theta)|^k\norm{R(\Gamma_n(\theta),T)} d\theta
			& = \int_{|\theta| \le \tilde{\theta}_n}\dots d\theta + \int_{\tilde{\theta}_n < |\theta| \le \pi}\dots d\theta  \\
			& =: I_1 + I_2,
		\end{align*}
		where we set $\tilde{\theta}_n := c n^{-\frac{\beta}{\alpha}}$ with $c < \frac{1}{\sqrt{2}}$. We recall that for $0\le |\theta| \ge \theta_n = c n^{-\frac{1}{\alpha}}$ we have $|\Gamma_n(\theta)| > 1$ and $|\Gamma_n(\theta)|-1 \le  \frac{1-c^2}{n}$, so that, since $\tilde{\theta}_n \le \theta_n$, these estimations are also valid for $0\le |\theta| \le \tilde{\theta}_n$. Using the $\beta$-Kreiss condition:
		$$
		I_1 \le C n^{\beta} \int_{0}^{\tilde{\theta}_n} \left(\theta + \frac{1}{n}\right)^k d\theta \le C' n^{\beta} \tilde{\theta}_n^{k+1} = C_1 n^{\beta - \beta\frac{k+1}{\alpha}} = C_1 n^{\beta \frac{\alpha-k-1}{\alpha}}.
		$$
		where $C_1$ depends on $\alpha, K_{\beta}(T),c$ and $k$.
		For the second integral, using the $\alpha$-Ritt condition:
		$$
		I_2 \le C \int_{\tilde{\theta}_n}^{\pi} \theta^{k-\alpha} d\theta \le C_2 \tilde{\theta}_n^{k-\alpha+1} = C_2 \left( n^{-\frac{\beta}{\alpha}} \right)^{k-\alpha+1} = C_2 n^{\frac{\beta}{\alpha}(\alpha - k - 1)},
		$$
		where $C_2$ depends on $\alpha,c$ and $k$.
		Both terms yield the same rate, which concludes the proof.
	\end{proof}

	\subsection{Connection to generalized Cesàro boundedness}
	One of the central themes in the ergodic theory of $T$ concerns the convergence and the growth properties of various means of the discrete semigroup $\{T^n\}_{n \ge 0}$. For $\gamma > -1$, the Cesàro means of order $\gamma$ (or Cesàro-$\gamma$ means) are defined by
	\begin{equation}\label{defSgamma}
		S_n^{\gamma}(T) := \sum_{j=0}^n A_{n-j}^{\gamma-1} T^j,
	\end{equation}
	where the binomial coefficients $A_n^\gamma$ are given by the generating relation $(1-z)^{-(\gamma+1)} = \sum_{n=0}^\infty A_n^\gamma z^n$ for $|z|<1$. These coefficients satisfy $A_n^\gamma = \binom{n+\gamma}{n} = \frac{(\gamma+1)(\gamma+2)\dots(\gamma+n)}{n!}$ and behave asymptotically as $A_n^\gamma \sim \frac{n^\gamma}{\Gamma(\gamma+1)}$ for $n \to \infty$. 
	
	Traditionally, an operator $T$ is called Cesàro-$\gamma$ bounded if $\|S_n^\gamma(T)\| = O(n^\gamma)$. In the context of $\beta$-Kreiss bounded operators, it is natural to investigate a more general condition where the growth rate also depends on the parameter $\beta$. This leads us to the following definition, which generalizes the classical notion.

	\begin{df}
		Let $\gamma > 0$ and $\beta \ge 1$. An operator $T \in B(X)$ is said to be $C_\gamma^{(\beta)}$-bounded if there exists a constant $C > 0$ such that
		\begin{equation*}\label{Cgamma}
			\|S_n^{\gamma}(T)\| \le C n^{\gamma +\beta - 1}, \quad n \in \N.
		\end{equation*}
		We denote by $C_{\gamma}^{(\beta)}(T)$ the smallest constant $C$ satisfying \eqref{Cgamma}. When $\beta = 1$, we recover the classical Cesàro-$\gamma$ boundedness.
	\end{df}
	\begin{remark}
		In the following, $C_1^{(\beta)}$-boundedness will play a central role. It is worth noting the connection between $C_1^{(\beta)}$-boundedness and the classical Cesàro-$\beta$  boundedness. Indeed, using the property $S_n^{\gamma + \delta+1}(T) = \sum_{j=0}^n A_{n-j}^{\delta} S_j^\gamma(T)$, we can write:
		$$
		S_n^1(T) = \sum_{j=0}^{n} A_{n-j}^{-\beta} S_j^{\beta}(T).
		$$
		If we assume that $T$ is Cesàro-$\beta$ bounded (i.e., $\|S_j^{\beta}(T)\| \le C j^\beta$), it follows that:
		$$
		\|S_n^1(T)\| \le C \sum_{j=0}^{n} |A_{n-j}^{-\beta}| j^{\beta} \le C n^{\beta} \sum_{j=0}^{n} |A_{n-j}^{-\beta}| \le C n^{\beta} \sum_{j=0}^{\infty} |A_{j}^{-\beta}|.
		$$
		For $\beta > 1$,  $\sum_{j=0}^{\infty} |A_{j}^{-\beta}| < \infty$ according to \cite[$(1\cdot16)$ p.77]{Zygmund}. Consequently, any Cesàro-$\beta$ bounded operator is necessarily $C_1^{(\beta)}$-bounded.
	\end{remark}
	
	Our first result connects $C_\gamma^{(\beta)}$-boundedness to Kreiss boundedness.
	\begin{prop}\label{CesaroImpliesbetaKreiss}
		Let $\beta, \gamma \ge 1$ and let $T \in B(X)$ be $C_\gamma^{(\beta)}$-bounded. Then $T$ is $\gamma+\beta$-Kreiss bounded.
	\end{prop}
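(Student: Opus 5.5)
The plan is to express the resolvent through the generating function of the Cesàro sums and then bound that series using the growth hypothesis. For $|z|<1$ we have the identity
$$
\sum_{n=0}^\infty S_n^{\gamma}(T) z^n = (1-z)^{-\gamma}\sum_{j=0}^\infty T^j z^j ,
$$
since $\sum_{m\ge 0} A_m^{\gamma-1} z^m = (1-z)^{-\gamma}$.

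First I check that this identity is meaningful. The hypothesis $\|S_n^\gamma(T)\|\le C n^{\gamma+\beta-1}$ gives convergence of the left-hand series for $|z|<1$. The inverse relation $T^n=\sum_j A^{-\gamma-1}_{n-j}S^\gamma_j(T)$ shows $\|T^n\|$ grows at most polynomially, so $r(T)\le 1$ and $\sigma(T)\subset\overline{\D}$. For $|\lambda|>1$, set $z=1/\lambda$. Then $\sum_j T^j\lambda^{-j}=\lambda R(\lambda,T)$, and therefore
$$
R(\lambda,T) = \lambda^{-1}\,(1-\lambda^{-1})^{\gamma}\sum_{n=0}^\infty \frac{S_n^{\gamma}(T)}{\lambda^{n}}.
$$

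Next I estimate the norm for $1<|\lambda|<2$, writing $r=|\lambda|^{-1}$. The factor $|1-\lambda^{-1}|^{\gamma}$ is bounded by $2^\gamma$, since $|\lambda^{-1}|<1$. Using $\Gamma$-function asymptotics, the series is bounded by
$$
C\sum_n n^{\gamma+\beta-1} r^n \le C' (1-r)^{-(\gamma+\beta)}.
$$
Because $1-r=(|\lambda|-1)/|\lambda|\ge (|\lambda|-1)/2$, this yields
$$
\|R(\lambda,T)\|\le \frac{C''}{(|\lambda|-1)^{\gamma+\beta}},
$$
which is exactly $(\gamma+\beta)$-Kreiss boundedness. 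I will apply the coefficient bound $n^{s-1}$ with $s=\gamma+\beta$ through $\sum_n A_n^{s-1}r^n=(1-r)^{-s}$ and $n^{s-1}\le C A_n^{s-1}$, which holds for $n\ge1$.

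The step I expect to be the main obstacle is justifying the spectral inclusion and the resolvent identity, namely recovering polynomial growth of $\|T^n\|$ from the Cesàro bounds. The inverse coefficients $A^{-\gamma-1}_m$ are bounded in absolute value by $C(1+m)^{-\gamma-1}$, which is summable, so $\|T^n\|=O(n^{\gamma+\beta-1})$ and the Neumann series converges for $|\lambda|>1$. A simpler way to avoid this subtlety is to note that $\sum_n S_n^\gamma(T) z^n$ converges for $|z|<1$, so $(1-z)^{\gamma}\sum_n S_n^\gamma(T) z^n$ is an analytic power series with coefficients $T^n$. This gives $\limsup\|T^n\|^{1/n}\le1$ directly.

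It is worth noting that the estimate is crude: the factor $|1-\lambda^{-1}|^{\gamma}$ is simply discarded. This is consistent with the statement claiming only $(\gamma+\beta)$-Kreiss boundedness.
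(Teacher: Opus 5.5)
Your proposal is correct and follows essentially the same route as the paper. Both arguments write $\lambda R(\lambda,T)=(1-\lambda^{-1})^{\gamma}\sum_{n\ge 0}S_n^{\gamma}(T)\lambda^{-n}$, bound the series via $n^{\gamma+\beta-1}\le C A_n^{\gamma+\beta-1}$ and $\sum_n A_n^{\gamma+\beta-1}r^n=(1-r)^{-(\gamma+\beta)}$, and then discard the bounded factor $|1-\lambda^{-1}|^{\gamma}$ (the paper keeps it as $|\lambda-1|^{\gamma}$ before absorbing it). Your justification of $\sigma(T)\subset\overline{\D}$, via the series $(1-z)^{\gamma}\sum_n S_n^{\gamma}(T)z^n$ having coefficients $T^n$, is a point the paper's proof leaves implicit, so your version is slightly more complete.
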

	\begin{proof}
		Let $f(\lambda) := \sum_{n=0}^{\infty} \Big(\sum_{j=0}^n A^{\gamma}_{n-j}T^j \Big) \lambda^{-n}$. Then 
		\begin{align*}
			f(\lambda) &= \sum_{j=0}^{\infty } \sum_{n=j}^{\infty} A^{\gamma-1}_{n-j}T^j\lambda^{-n} = \sum_{j=0}^{\infty } T^j \sum_{n=0}^{\infty} A^{\gamma-1}_{n}\lambda^{-n-j} = \lambda \Big(\sum_{j=0}^{\infty } \frac{T^j}{\lambda^j} \Big)\Big( \sum_{n=0}^{\infty} A^{\gamma-1}_{n}\lambda^{-n}\Big) \\
			&= \lambda^{\gamma+1} R(\lambda,T)(\lambda-1)^{-\gamma }.
		\end{align*}
		Furthermore, 
		\begin{align*}
			\frac{|\lambda|^{\gamma+1}}{|\lambda-1|^{\gamma}}\norm{R(\lambda,T)}& = \norm{f(\lambda)}  \le \sum_{n=0}^{\infty} \norm{S_n^{\gamma}(T)}|\lambda|^{-n} \le C_{\gamma}^{(\beta)}(T)\sum_{n=0}^{\infty}n^{\gamma+\beta -1}   |\lambda|^{-n} \\
			&\le C  \sum_{n=0}^{\infty}A_n^{\gamma+\beta -1} = C\frac{|\lambda|^{\gamma+\beta}}{(|\lambda|-1)^{\gamma + \beta}}, 
		\end{align*}
		with $C$ depending only $C_{\gamma}^{(\beta)}(T)$. Finally we obtain
		$$
		\norm{R(\lambda,T)} \le C\frac{|\lambda|^{\beta-1}|\lambda-1|^{\gamma }}{(|\lambda| -1)^{\gamma + \beta}}
		$$
		which gives that $T$ is $\gamma+\beta$-Kreiss bounded.
	\end{proof}

	\begin{remark}
		If $T \in B(X)$ is Cesàro-$\gamma$ bounded for some $\gamma > 0$, it is well known that $\|T^n\| = O(n^{\gamma})$. However, if $T$ is additionally assumed to be $\alpha$-Ritt, combining Propositions~\ref{PropbetaKreissalphaRitt} and~\ref{CesaroImpliesbetaKreiss} yields the estimate
		\[
		\|T^n\| = O\left(n^{(\gamma+1)\frac{\alpha-1}{\alpha}}\right).
		\]
		In particular, this ensures that $\|T^n\| = o(n^{\gamma})$ provided that $\alpha < \gamma + 1$.
	\end{remark}
	
	Conversely the next result establishes that $\beta$-Kreiss boundedness is a sufficient condition for $C_\gamma^{(\beta)}$-boundedness for any $\gamma > 1$.
	
	\begin{prop}\label{PropBetaKreissImpliesCgammabetabounded}
		Let $T$ be a $\beta$-Kreiss bounded operator with constant $K_\beta(T)$. Then for each $\gamma > 1$, $T$ is $C_\gamma^{(\beta)}$-bounded. Furthermore $C_{\gamma}^{(\beta)}(T) $ depends only on $\gamma$ and $K_\beta(T)$.
	\end{prop}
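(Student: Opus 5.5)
The plan is to represent $S_n^{\gamma}(T)$ through the Riesz--Dunford calculus on a circle of radius $\rho = 1+\frac1n$, and then bound the integral using the $\beta$-Kreiss condition together with Lemma \ref{LemmaIntRes}.

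\textbf{Step 1: generating function.} The computation in the proof of Proposition \ref{CesaroImpliesbetaKreiss} gives, for $|\lambda|>1$,
$$
\sum_{n\ge 0} S_n^{\gamma}(T)\lambda^{-n} = \Big(\sum_{j\ge0}T^j\lambda^{-j}\Big)\Big(\sum_{m\ge0}A_m^{\gamma-1}\lambda^{-m}\Big) = \lambda^{\gamma+1}(\lambda-1)^{-\gamma}R(\lambda,T).
$$

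\textbf{Step 2: Cauchy coefficient formula.} Extracting the $n$-th coefficient on the circle $|\lambda|=\rho$ yields
$$
S_n^{\gamma}(T)=\frac{1}{2\pi}\int_{-\pi}^{\pi}\rho^{n+\gamma+1}e^{i(n+\gamma+1)t}\,(\rho e^{it}-1)^{-\gamma}R(\rho e^{it},T)\,dt .
$$
Here a branch of $\lambda^{\gamma}$ is used on $|\lambda|>1$, or equivalently one expands the coefficient of the analytic function directly; only absolute values matter afterwards.

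\textbf{Step 3: estimation.} Take $\rho=1+\frac1n$, so that $\rho^{n+\gamma+1}\le C$. The $\beta$-Kreiss condition gives
$$
\|R(\rho e^{it},T)\|\le K_\beta(T)\,n^{\beta}.
$$
Hence
$$
\|S_n^\gamma(T)\|\le C\,K_\beta(T)\,n^\beta\int_{-\pi}^{\pi}\frac{dt}{|\rho e^{it}-1|^{\gamma}}.
$$
Since $\gamma>1$, Lemma \ref{LemmaIntRes} bounds the integral by $C_\gamma(\rho-1)^{1-\gamma}=C_\gamma n^{\gamma-1}$. Therefore
$$
\|S_n^\gamma(T)\|\le C(\gamma)\,K_\beta(T)\,n^{\gamma+\beta-1},
$$
with a constant depending only on $\gamma$ and $K_\beta(T)$, as claimed.

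\textbf{Main obstacle.} The only delicate point is applying the Kreiss estimate uniformly for all $n\ge 1$. The condition \eqref{betaKreiss} is stated only for $1<|\lambda|<2$, and $\rho=1+\frac1n$ lies in that range for $n\ge 2$. The finitely many remaining $n$ are handled by taking $\rho=3/2$, where $\|R\|$ is bounded by $K_\beta(T)2^\beta$.

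The hypothesis $\gamma>1$ is used exactly to land in the second case of Lemma \ref{LemmaIntRes}. For $\gamma=1$ the same argument would only give the extra factor $\log n$.
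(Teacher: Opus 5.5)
Your proposal is correct and is essentially the paper's proof. The paper extracts $S_n^{\gamma}(T)$ by the Cauchy coefficient formula from the generating function $(1-z)^{-\gamma}(I-zT)^{-1}$ on $|z|=\rho=1-\frac1n$, which is your circle under $\lambda=1/z$, at distance $\sim\frac1n$ outside $\mathbb{T}$; it then applies the $\beta$-Kreiss bound there and finishes with Lemma \ref{LemmaIntRes} for $\gamma>1$, exactly as you do. Your explicit treatment of the restriction $1<|\lambda|<2$ (taking $\rho=3/2$ for $n=1$) and of the branch of $\lambda^{\gamma}$ is slightly more careful than the paper, which applies the resolvent bound for all $|\lambda|>1$ and whose choice $\rho=1-\frac1n$ degenerates at $n=1$.
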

	
	\begin{proof}
		Let $0 < \rho < 1$. By the Cauchy inversion formula and the generating function identity 
		\begin{equation}\label{identitySnTn}
			\sum_{m=0}^{\infty} z^m S_m^{\gamma}(T) = (1-z)^{-\gamma} (I-zT)^{-1}, \quad |z|<1,
		\end{equation}
		the means $S_n^\gamma(T)$ can be represented as:
		\begin{align*}
			2\pi \rho^n S_n^{\gamma}(T) &= \int_{-\pi}^{\pi} (1-\rho e^{it})^{-\gamma} e^{int} \sum_{j=0}^{\infty} (\rho e^{it})^j T^j \, dt \\
			&= \frac{1}{\rho} \int_{-\pi}^{\pi} e^{i(n+1)t} (1-\rho e^{it})^{-\gamma} R\Big(\frac{e^{it}}{\rho}, T\Big) \, dt.
		\end{align*}
		The $\beta$-Kreiss boundedness of $T$ implies $\|R(\lambda, T)\| \le K_\beta(T) (|\lambda|-1)^{-\beta}$ for $|\lambda| > 1$. Setting $\lambda = e^{it}/\rho$, we have $|\lambda|-1 = \frac{1-\rho}{\rho}$, which yields:
		\begin{align*}
			\|S_n^{\gamma}(T)\| &\le \frac{K_\beta(T)}{2\pi \rho^{n+1} (\frac{1}{\rho}-1)^{\beta}} \int_{-\pi}^{\pi} \frac{1}{|\rho e^{it}-1|^{\gamma}} \, dt \\
			&= \frac{K_\beta(T)}{2\pi \rho^{n+1-\beta + \gamma}(1-\rho)^{\beta}} \int_{-\pi}^{\pi} \frac{1}{| \frac{e^{it}}{\rho}-1|^{\gamma}} \, dt.
		\end{align*}
		Using Lemma \eqref{LemmaIntRes}, for $\gamma > 1$ and $r = 1/\rho > 1$, we have 
		\begin{equation}\label{ineMahRue} 
			\int_{-\pi}^{\pi} |1-re^{it}|^{-\gamma} dt \le C_\gamma (r-1)^{1-\gamma}.
		\end{equation} We arrive at
		$$
		\|S_n^{\gamma}(T)\| \le \frac{\rho^{\beta-1}C_\gamma K_\beta(T)}{2\pi\rho^{n+1}(1-\rho)^{\beta -1 + \gamma}}.
		$$
		Since $\rho^{\beta-1} \le 1$ for $0 < \rho < 1$, taking $\rho = 1-\frac{1}{n}$, we obtain $\|S_n^{\gamma}(T)\| = O(n^{\gamma+\beta-1})$, which proves \eqref{Cgamma}.
	\end{proof}
	
	\begin{remark}
		\begin{enumerate}
			\item If $T$ is $\beta$-Kreiss bounded, the limit case $\gamma = 1$ leads to 
			$$\|S^1_n(T)\| = O \big(n^{\beta} \log(n+1)\big).$$
			We show in Section~\ref{mKreissnotC_1^m} that this behavior is optimal. Specifically, we provide an example of a $\beta$-Kreiss bounded operator for which 
			\[
			\|S^{1}_n(T)\| \ge C^{-1} n^{\beta} \log(n+1).
			\]
			\item If $T$ is $(\alpha, \beta)$-RK with $\alpha > 0$, the same  argument shows that for $\gamma \ge 1$:
			$$
			\|S_n^{\gamma}(T)\| \le \frac{C}{\rho^{n +1-\beta +\gamma}(\rho-1)^{\beta}} \int_{-\pi}^{\pi} \frac{1}{| \frac{e^{it}}{\rho}-1|^{\alpha+\gamma}} \, dt.
			$$
			In particular, for $\gamma = 1$, using \eqref{ineMahRue} and $\rho = 1-\frac{1}{n}$, we find $\|S_n^{1}(T)\| = O( n^{\beta + \alpha})$. Thus, any $(\alpha, \beta)$-RK operator is $C_1^{(\alpha+\beta)}$-bounded. This implies that $\alpha$-Ritt operators ($\alpha > 1$) are $C_1^{(\alpha)}$-bounded, while for $\alpha=1$ (Ritt case), $T$ is power-bounded and thus Cesàro bounded.
		\end{enumerate}    
	\end{remark}
	
	\begin{prop}
		For an operator $T \in B(X)$, the following are equivalent:
		\begin{enumerate}
			\item $T$ is $\beta$-Kreiss bounded.
			\item For each $\gamma > 1$, $\lambda T$ is $C_{\gamma}^{(\beta)}$-bounded uniformly for all $\lambda \in \mathbb{T}$, i.e.,
			\begin{equation}\label{unifCgamma}
				\sup_{\lambda \in \mathbb{T}} C_{\gamma}^{(\beta)}(\lambda T) < \infty.
			\end{equation}
			\item Condition \eqref{unifCgamma} holds for at least one $\gamma > 1$.
		\end{enumerate}
	\end{prop}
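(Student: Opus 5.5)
The plan is to prove (1)$\Rightarrow$(2)$\Rightarrow$(3)$\Rightarrow$(1). The first two implications are essentially already done; all the real work is in recovering the resolvent bound from uniform Cesàro bounds.

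\textbf{(1)$\Rightarrow$(2).} If $T$ is $\beta$-Kreiss bounded and $\lambda\in\mathbb{T}$, then
$$R(\mu,\lambda T)=\bar\lambda R(\bar\lambda\mu,T) \quad\text{and}\quad |\bar\lambda \mu|=|\mu|.$$
Hence $\lambda T$ is $\beta$-Kreiss bounded with $K_\beta(\lambda T)=K_\beta(T)$. Proposition \ref{PropBetaKreissImpliesCgammabetabounded} then makes $\lambda T$ $C_\gamma^{(\beta)}$-bounded for every $\gamma>1$. Its constant depends only on $\gamma$ and $K_\beta(\lambda T)=K_\beta(T)$, so it is uniform in $\lambda\in\mathbb{T}$, which gives \eqref{unifCgamma}.

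\textbf{(2)$\Rightarrow$(3)} is trivial.

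\textbf{(3)$\Rightarrow$(1).} Fix $\gamma>1$ with $\sup_{\lambda\in\mathbb{T}} C_\gamma^{(\beta)}(\lambda T)=:M<\infty$. Let $1<|\mu|<2$ and write $\mu=r\bar\lambda^{-1}$ with $r=|\mu|$ and $\lambda\in\mathbb{T}$, i.e.\ $\lambda=r/\mu$. Then $\|R(\mu,T)\|=\|R(r,\lambda T)\|$, so it suffices to bound $\|R(r,S)\|$ for real $r\in(1,2)$, uniformly over $S=\lambda T$. Here I use the generating identity \eqref{identitySnTn} with $z=1/r$:
$$(1-r^{-1})^{-\gamma}\,r\,R(r,S)=\sum_{m\ge0} r^{-m}S_m^\gamma(S).$$
The right-hand side converges absolutely: the bound $\|S_m^\gamma(S)\|\le M m^{\gamma+\beta-1}$ forces $\|S^n\|$ to grow at most polynomially, so $\sigma(S)\subset\overline{\mathbb{D}}$ and the identity holds for $r>1$. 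Therefore
$$\|R(r,S)\|\le (1-r^{-1})^{\gamma}\,M\sum_m m^{\gamma+\beta-1}r^{-m}\le C M (r-1)^{\gamma}(r-1)^{-(\gamma+\beta)}=CM(r-1)^{-\beta},$$
using $\sum_m A_m^{\gamma+\beta-1}r^{-m}=(1-r^{-1})^{-(\gamma+\beta)}$ as in the proof of Proposition \ref{CesaroImpliesbetaKreiss}. This is exactly \eqref{betaKreiss} for $T$, since $|\mu|-1=r-1$.

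The main obstacle is the \textbf{(3)$\Rightarrow$(1)} step, and specifically the gain the rotation provides. Proposition \ref{CesaroImpliesbetaKreiss} alone leaves the factor $|\mu-1|^\gamma$ in the numerator and only yields $\gamma+\beta$-Kreiss boundedness. Uniformity in $\lambda$ lets us move every point $\mu$ onto the positive real axis, where $|\mu-1|=|\mu|-1$ and the factor cancels. The remaining checks are routine: that $\sigma(T)\subset\overline{\mathbb{D}}$ follows from the polynomial Cesàro bounds, and that the constants do not depend on $\lambda$.
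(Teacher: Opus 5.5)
Your proof is correct and is essentially the paper's argument. (1)$\Rightarrow$(2) goes through Proposition~\ref{PropBetaKreissImpliesCgammabetabounded} and $K_\beta(\lambda T)=K_\beta(T)$, and (3)$\Rightarrow$(1) rotates $\mu$ to the real point $r=|\mu|$ and expands $R(r,\lambda T)$ through the generating identity \eqref{identitySnTn}, so that $(1-r^{-1})^{\gamma}$ cancels the extra growth. There is one notational slip: with $\lambda=r/\mu$ one has $\mu=r\bar\lambda$, not $r\bar\lambda^{-1}$. Your remark that the polynomial Ces\`aro bounds give $\sigma(T)\subset\overline{\mathbb{D}}$ fills in a point the paper leaves implicit.
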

	
	\begin{proof}
		$(i) \implies (ii)$ : According to Proposition \ref{PropBetaKreissImpliesCgammabetabounded}, this follows from the fact that $K_\beta(\lambda T) = K_\beta(T)$ for all $\lambda \in \mathbb{T}$,  since the constant $C_{\gamma}^{(\beta)}(\lambda T)$ depends only on $\alpha$ and $K_{\beta}(\lambda T)$. 
		
		$(ii) \implies (iii)$ is trivial. 
		
		To prove $(iii) \implies (i)$, let $\lambda = r e^{i\theta}$ with $r > 1$ and $\theta \in [-\pi, \pi]$. Using the identity \eqref{identitySnTn}, we can write the resolvent as:
		\begin{align*}
			R(r e^{i\theta}, T) &= e^{-i\theta} R(r, e^{-i\theta}T) = e^{-i\theta} \sum_{j=0}^{\infty} \frac{(e^{-i\theta}T)^j}{r^{j+1}} \\
			&= \left( 1-\frac{1}{r}\right)^{\gamma} \frac{e^{-i \theta}}{r} \sum_{j=0}^{\infty} \frac{S_j^{\gamma}(e^{-i\theta}T)}{r^j}.
		\end{align*}
		By assumption $(iii)$, there exists $\gamma > 1$ such that $\lambda T$ is $C_\gamma^{(\beta)}$-bounded uniformly in $\mathbb{T}$. Let $M = \sup_{\lambda \in \mathbb{T}} C_{\gamma}^{(\beta)}(\lambda T)$. Then, for any $j \in \mathbb{N}$, we have $\|S_j^{\gamma}(e^{-i\theta}T)\| \le M(j+1)^{\gamma+\beta-1}$. 
		
		Recall that for $\delta \ge 0$ and $|z|<1$, we have the power series $\sum_{j=0}^{\infty} z^j A_{j}^{\delta} = (1-z)^{-(\delta+1)}$ and the comparison $(j+1)^{\delta} \le D_{\delta} A_j^{\delta}$. It follows that:
		\begin{align*} 
			\|R(\lambda, T)\| &\le M \frac{(r-1)^{\gamma}}{r^{\gamma+1}} \sum_{j=0}^{\infty} \frac{(j+1)^{\gamma+\beta - 1}}{r^j} \le C \frac{(r-1)^{\gamma}}{r^{\gamma+1}} \sum_{j=0}^{\infty} \frac{A_{j}^{\gamma+\beta - 1}}{r^j} \\
			&=C \frac{(r-1)^{\gamma}}{r^{\gamma+1}} \left(1-\frac{1}{r}\right)^{-(\gamma +\beta)} = MD_{\beta} \frac{(r-1)^{\gamma}}{r^{\gamma+1}} \frac{r^{\gamma+\beta}}{(r-1)^{\gamma+\beta}} = C \frac{r^{\beta-1}}{(r-1)^{\beta}},
		\end{align*}
		where $C$ depends on $\gamma, \beta$ and $M$. Thus we obtain:
		$$
		\|R(\lambda, T)\| \le \frac{K}{(|\lambda|-1)^{\beta}}, \quad 1<|\lambda| <2.
		$$
		This shows that $T$ is $\beta$-Kreiss bounded, concluding the proof.
	\end{proof}

	\subsection{Uniform $\beta$-Kreiss boundedness}
	
	A strengthened condition of Kreiss boundedness was introduced in \cite{MSZ}, known as \textit{uniform Kreiss boundedness}. It is well established that an operator $T$ is uniformly Kreiss bounded if and only if 
	$$
	\sup_{\lambda \in \mathbb{T}}\sup_{n\in \mathbb{N}} \norm{(n+1)^{-1} S^1_n(\lambda T)} < \infty.
	$$
	In this section, we extend this concept to the $\beta$-order case.
	
	\begin{df}
		An operator $T \in B(X)$ is said to be uniformly $\beta$-Kreiss bounded if there exists a constant $C > 0$ such that
		\begin{equation}\label{unifbKreiss}
			\norm{\sum_{j=0}^n \frac{T^j}{\lambda^{j+1}}} \le \frac{C}{(|\lambda|-1)^{\beta}}, \quad \text{for all } 1 < |\lambda| < 2 \text{ and } n \in \mathbb{N}.
		\end{equation}
		We denote by $K_U^{(\beta)}(T)$ the smallest constant $C$ satisfying \eqref{unifbKreiss}.
	\end{df}
	
	Following the approach in \cite[Theorem 3.1]{MSZ} and \cite[Theorem 2]{GroHui}, we provide a characterization of this property in terms of uniform Cesàro means.
	
	\begin{prop}\label{KequivC}
		Let $T \in B(X)$ and $\beta \ge 1$. Then $T$ is uniformly $\beta$-Kreiss bounded if and only if $T$ is uniformly $C_1^{(\beta)}$-bounded on the unit circle, i.e.,
		\begin{equation}\label{unifbcesaro}
			\sup_{\lambda \in \mathbb{T}}\sup_{n\in \mathbb{N}} \norm{(n+1)^{-\beta} S^1_n(\lambda T)} < \infty.
		\end{equation}
	\end{prop}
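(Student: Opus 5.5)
Both directions rest on comparing the partial resolvent sums with Abel-type sums of Cesàro means. Write $\lambda = re^{i\theta}$ with $1<r<2$ and set $\mu = e^{-i\theta}\in\mathbb{T}$. Then
$$\sum_{j=0}^n \frac{T^j}{\lambda^{j+1}} = \frac{1}{r}\sum_{j=0}^n \frac{(\mu T)^j}{r^{j}},$$
so \eqref{unifbKreiss} becomes a statement about the operators $\mu T$, $\mu\in\mathbb{T}$, evaluated at the real parameter $r$. The plan is to pass between the truncated sums $P_n(r,\mu):=\sum_{j=0}^n r^{-j}(\mu T)^j$ and $S_n^1(\mu T)=\sum_{j=0}^n(\mu T)^j$ by Abel summation, as in \cite[Theorem 3.1]{MSZ} and \cite[Theorem 2]{GroHui}.

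\textbf{(Cesàro $\Rightarrow$ Kreiss).} Assume \eqref{unifbcesaro} holds with constant $M$, and abbreviate $S_j:=S_j^1(\mu T)$. Abel summation gives
$$P_n(r,\mu)=\sum_{j=0}^{n-1}S_j\,(r^{-j}-r^{-j-1}) + S_n r^{-n}.$$
The two pieces are bounded as follows.
\begin{itemize}
\item The sum is at most $M(1-r^{-1})\sum_j (j+1)^\beta r^{-j}\le C(r-1)\,(r-1)^{-\beta-1}=C(r-1)^{-\beta}$, using $\sum_j (j+1)^\beta r^{-j}\le C_\beta\,(1-1/r)^{-\beta-1}$ for $1<r<2$.
\item The boundary term is at most $M(n+1)^\beta r^{-n}\le M\sup_{x\ge0}(x+1)^\beta e^{-x\log r}\le C(\log r)^{-\beta}\le C'(r-1)^{-\beta}$, since $\log r\ge (r-1)/2$ on $(1,2)$.
\end{itemize}
Both bounds are uniform in $n$ and $\mu$, which gives \eqref{unifbKreiss}.

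\textbf{(Kreiss $\Rightarrow$ Cesàro).} Fix $n$ and $\mu$, and choose $r=1+\frac{1}{n+1}$ (for small $n$ there is nothing to prove). Uniform $\beta$-Kreiss boundedness applied at $\lambda=r\bar\mu$ gives $\|P_m(r,\mu)\|\le C(n+1)^\beta$ for every $m$. We then invert the Abel summation, writing $(\mu T)^j = r^j\big(P_j-P_{j-1}\big)$ with $P_{-1}=0$:
$$S_n^1(\mu T)=\sum_{j=0}^n r^j(P_j-P_{j-1}) = r^nP_n-\sum_{j=0}^{n-1}(r^{j+1}-r^j)P_j.$$
Since $r^j\le r^{n}\le (1+\tfrac1{n+1})^{n+1}\le e$, we get $\|S_n^1(\mu T)\|\le C(n+1)^\beta\big(e+(r^n-1)\big)\le C'(n+1)^\beta$. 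Here $\sum_{j<n}(r^{j+1}-r^j)=r^n-1$ is bounded by $e-1$. The constant is uniform in $\mu$, which gives \eqref{unifbcesaro}.

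The only delicate point is the choice $r=1+1/(n+1)$ in the second direction. This choice makes $r^n$ bounded while producing exactly $(r-1)^{-\beta}\asymp n^\beta$. Likewise, in the first direction the growth of the boundary term $(n+1)^\beta r^{-n}$ must be absorbed by a bound uniform in $n$. Everything else is routine telescoping.
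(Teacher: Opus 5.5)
Your proof is correct and takes essentially the same route as the paper. Both directions use Abel summation. For Kreiss $\Rightarrow$ Ces\`aro you choose $r=1+\frac{1}{n+1}$. For the converse you bound $\sup_x x^\beta r^{-x}$ via $\log r\gtrsim r-1$, and handle the remaining sum through the comparison $(j+1)^\beta\le D_\beta A_j^\beta$ and the generating function of $A_j^\beta$.

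The only slip is harmless. The identity should read $\sum_{j=0}^n T^j\lambda^{-(j+1)}=\frac{\mu}{r}\sum_{j=0}^n r^{-j}(\mu T)^j$, so your formula drops the unimodular factor $\mu$, which does not affect any of the norm estimates.
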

	
	\begin{proof}
		Assume first that $T$ is uniformly $\beta$-Kreiss bounded. Let $\lambda \in \mathbb{T}$ and $r > 1$. By applying Abel's summation formula, we have:
		$$
		S_n^1(\lambda T) = \sum_{j=0}^n (\lambda T)^j = \sum_{j=0}^n r^{j} \frac{(\lambda T)^j}{r^j} = r^{n} \sum_{j=0}^n \frac{(\lambda T)^j}{r^j} + \sum_{j=0}^{n-1} (r^j - r^{j+1}) \sum_{l=0}^j \frac{(\lambda T)^l}{r^l}.
		$$
		Using the assumption \eqref{unifbKreiss} (noting that $\norm{\sum \frac{(\lambda T)^j}{r^{j}}} \le K_U^{(\beta)}(T)r (r-1)^{-\beta}$), we obtain:
		\begin{align*}
			\norm{S_n^1(\lambda T)} &\le K_U^{(\beta)}(T) \left( \frac{r^{n+1}}{(r-1)^{\beta}} + \sum_{j=0}^{n-1} \frac{r^{j+1}(r-1)}{(r-1)^\beta} \right) \\
			&= \frac{K_U^{(\beta)}(T)}{(r - 1)^{\beta}} \left( r^{n+1} + (r-1) \frac{r(r^n-1)}{r-1} \right) \le \frac{2 K_U^{(\beta)}(T) r^{n+1}}{(r-1)^\beta}.
		\end{align*}
		Setting $r = 1 + \frac{1}{n+1}$, we find $\norm{S_n^1(\lambda T)} \le C (n+1)^{\beta}$, where $C$ is independent of $n$ and $\lambda$. Thus, \eqref{unifbcesaro} holds.
		
		Conversely, assume that $T$ satisfies \eqref{unifbcesaro} with constant $M$. Let $\lambda = r e^{i\theta}$ with $1 < r < 2$. Then:
		$$
		\sum_{j=0}^n \frac{T^j}{\lambda^{j+1}} = \frac{e^{-i(n+1)\theta}}{r^{n+1}} S^1_n(e^{-i\theta} T) + \sum_{j=0}^{n-1} \left( \frac{1}{r^{j+1}} - \frac{1}{r^{j+2}} \right) e^{-i(j+1)\theta} S^1_j(e^{-i\theta} T).
		$$
		Applying the bound $\|S_j^1(e^{-i\theta} T)\| \le M(j+1)^\beta$, we get:
		$$
		\norm{\sum_{j=0}^n \frac{T^j}{\lambda^{j+1}}} \le M \left( \frac{(n+1)^{\beta}}{r^{n+1}} + \frac{r-1}{r} \sum_{j=0}^{n-1} \frac{(j+1)^{\beta}}{r^{j+1}} \right). 
		$$
		The function $x \mapsto x^{\beta} r^{-x}$ attains its maximum at $x = \beta/\log(r)$. Using the inequality $\log(r) \ge (r-1)/r$, we have:
		$$
		\frac{(n+1)^{\beta}}{r^n} \le \left( \frac{\beta}{\log(r)e} \right)^{\beta} \le \left( \frac{\beta r}{e} \right)^{\beta} \frac{1}{(r-1)^{\beta}}.
		$$
		Furthermore, using the comparison with binomial coefficients $ (j+1)^{\beta} \le  D_{\beta}A_j^\beta$, we have:
		$$
		\sum_{j=0}^{n-1} \frac{(j+1)^{\beta}}{r^{j+1}} \le \frac{D_{\beta}}{r} \sum_{j=0}^{\infty} \frac{A_j^{\beta}}{r^j} = \frac{D_{\beta}}{r} \left( 1 - \frac{1}{r} \right)^{-(\beta+1)} = \frac{D_{\beta} r^{\beta}}{(r-1)^{\beta+1}}.
		$$
		Combining these estimates, we obtain:
		$$
		\norm{\sum_{j=0}^n \frac{T^j}{\lambda^{j+1}}} \le  \frac{C}{(|\lambda|-1)^{\beta}}, \quad 1<|\lambda|<2,
		$$
		the constant $C$ is independent of $n$ and $\lambda$. This concludes the proof.
	\end{proof}

	\subsection{The case of positive operators}
	
	In this section, we focus on the class of positive operators. A well-known result (see e.g., \cite[Proposition 5.13.]{CCEL}) states that for a positive operator $T \in B(X)$, Cesàro boundedness is equivalent to Kreiss boundedness. Outside the setting of positive operators, this equivalence fails: there exist operators that are Cesàro bounded but not Kreiss bounded, and vice versa. We now establish an analogous result within our $\beta$-framework.
	
	\begin{prop}\label{positiveKreiss}
		Let $X$ be a Banach lattice and let $T \in B(X)$ be a positive operator. Then the following are equivalent:
		\begin{enumerate}
			\item $T$ is $C_1^{(\beta)}$-bounded.
			\item $T$ is uniformly $\beta$-Kreiss bounded.
			\item $T$ is $\beta$-Kreiss bounded.
		\end{enumerate}
	\end{prop}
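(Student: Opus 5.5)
I will prove the cycle $(1)\Rightarrow(2)\Rightarrow(3)\Rightarrow(1)$. The implication $(2)\Rightarrow(3)$ needs no positivity. For $1<|\lambda|<2$, the resolvent is the norm limit as $n\to\infty$ of the partial sums $\sum_{j=0}^n T^j\lambda^{-j-1}$, because $\sigma(T)\subset\overline{\mathbb{D}}$ gives spectral radius at most $1<|\lambda|$. So \eqref{unifbKreiss} passes to the limit and gives $\norm{R(\lambda,T)}\le K_U^{(\beta)}(T)(|\lambda|-1)^{-\beta}$. One should also check that $\beta$-Kreiss boundedness forces $\sigma(T)\subset\overline{\mathbb{D}}$; this is part of the definition. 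When proving $(1)\Rightarrow(2)$ I will note that $C_1^{(\beta)}$-boundedness gives $\norm{T^n}\le\norm{S_n^1}+\norm{S_{n-1}^1}=O(n^\beta)$, so $r(T)\le1$.

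For $(1)\Rightarrow(2)$, I will use Proposition \ref{KequivC}. It suffices to show \eqref{unifbcesaro}, i.e.\ $\sup_{\mu\in\mathbb{T}}\norm{S_n^1(\mu T)}\le C(n+1)^\beta$. Positivity is the key input here. For $x\in X$ we have the lattice inequality $|\sum_{j=0}^n\mu^jT^jx|\le\sum_{j=0}^n T^j|x|=S_n^1(T)|x|$, using $|T^jx|\le T^j|x|$ for positive $T$ and the triangle inequality for the modulus in a complex Banach lattice. Taking norms gives $\norm{S_n^1(\mu T)x}\le\norm{S_n^1(T)}\,\norm{x}$, so $\norm{S_n^1(\mu T)}\le\norm{S_n^1(T)}\le C_1^{(\beta)}(T)n^\beta$ uniformly in $\mu$.

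For $(3)\Rightarrow(1)$, I will again use positivity, now through the Neumann series at real points. For $r\in(1,2)$, $R(r,T)=\sum_j T^jr^{-j-1}$ is a positive operator. For $x\ge0$ it dominates the partial sum $\sum_{j=0}^n T^jr^{-j-1}x\ge r^{-n-1}S_n^1(T)x$. In a Banach lattice, $0\le u\le v$ implies $\norm{u}\le\norm{v}$, so for $x\ge0$
\[
\norm{S_n^1(T)x}\le r^{n+1}\norm{R(r,T)x}\le r^{n+1}K_\beta(T)(r-1)^{-\beta}\norm{x}.
\]
A general $x$ is split into four positive parts (real/imaginary, positive/negative), which costs only a constant factor; equivalently, for positive operators the norm is attained on the positive cone up to a constant. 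Choosing $r=1+\frac1{n+1}$ gives $\norm{S_n^1(T)}\le C(n+1)^\beta$, which is $C_1^{(\beta)}$-boundedness.

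The main obstacle is the complex Banach lattice bookkeeping. One needs the modulus inequality $|\sum\mu^jT^jx|\le\sum T^j|x|$ for positive (real) operators on the complexification, and the monotonicity of the norm. Both are standard; I will cite them, e.g.\ via \cite[Proposition 5.13]{CCEL}, where the case $\beta=1$ uses exactly the same argument. The remaining steps are elementary estimates already carried out in the proof of Proposition \ref{KequivC}.
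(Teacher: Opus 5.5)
Your proposal is correct and follows the paper's proof essentially step by step: the lattice domination $|S_n^1(\mu T)x|\le S_n^1(T)|x|$ together with Proposition \ref{KequivC} gives $(1)\Rightarrow(2)$, and the bound $S_n^1(T)x\le r^{n+1}R(r,T)x$ for $x\ge 0$ with $r=1+\frac{1}{n+1}$ gives $(3)\Rightarrow(1)$; your splitting of a general $x$ into four positive parts spells out a step the paper leaves implicit. One point to tidy is in $(2)\Rightarrow(3)$: you must deduce $\sigma(T)\subset\overline{\mathbb{D}}$ from (2) itself, not from the definition of $\beta$-Kreiss boundedness or from (1), but this is immediate, since subtracting consecutive partial sums in \eqref{unifbKreiss} gives $\norm{T^n}\le 2K_U^{(\beta)}(T)|\lambda|^{n+1}(|\lambda|-1)^{-\beta}$ and hence $r(T)\le|\lambda|$ for every $1<|\lambda|<2$.
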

	
	\begin{proof}
		$(i) \implies (ii)$: Let $\lambda \in \mathbb{T}$ and $x \in X$. Since $T$ is positive, we have for any $n \in \mathbb{N}$:
		$$
		\left| \sum_{j=0}^n (\lambda T)^j x \right| \le \sum_{j=0}^n |\lambda|^j T^j |x| = \sum_{j=0}^n T^j |x|.
		$$
		Taking the norm and using the fact that $\| |x| \| = \|x\|$ in a Banach lattice, we obtain:
		$$
		\|S_n^1(\lambda T)x\| \le \|S_n^1(T)|x|\| \le C_1^{(\beta)}(T) (n+1)^\beta \|x\|.
		$$
		This shows that $\sup_{\lambda \in \mathbb{T}} \sup_{n \in \mathbb{N}} \|(n+1)^{-\beta} S_n^1(\lambda T)\| < \infty$. By Proposition \ref{KequivC}, $T$ is uniformly $\beta$-Kreiss bounded.
		
		$(ii) \implies (iii)$ is immediate from the definitions.
		
		$(iii) \implies (i)$: Since $T$ is positive, for any $x \in X_+$ and any $r > 1$, we have:
		$$
		S_n^1(T)x = \sum_{j=0}^{n} T^j x \le r^{n+1} \sum_{j=0}^{n} \frac{T^j}{r^{j+1}} x \le r^{n+1} \sum_{j=0}^{\infty} \frac{T^j}{r^{j+1}} x = r^{n+1} R(r, T)x.
		$$
		Taking the norm and applying the $\beta$-Kreiss boundedness of $T$:
		$$
		\|S_n^1(T)x\| \le r^{n+1} \|R(r, T)\| \|x\| \le K_\beta(T) \frac{r^{n+1}}{(r-1)^{\beta}} \|x\|.
		$$
		By choosing $r = 1 + \frac{1}{n+1}$, we obtain:
		$$
		\|S_n^1(T)\| \le K_\beta(T) \left(1 + \frac{1}{n+1}\right)^{n+1} (n+1)^\beta \le e K_\beta(T) (n+1)^\beta,
		$$
		which concludes the proof that $T$ is $C_1^{(\beta)}$-bounded.
	\end{proof}

	\subsection{Sharpness of the power growth for $\beta$-Kreiss bounded operators}
	
	Recall that if $T$ is a Kreiss bounded operator on a Hilbert space, then 
	$$
	\norm{T^n} = o(n).
	$$
	Since a $\beta$-Kreiss bounded operator $T$ satisfies $\norm{T^n} = O(n^{\beta})$, we are naturally led to the following question: 
	
	\begin{q}\label{qbKreiss}
		Does every $\beta$-Kreiss bounded operator on a Hilbert space satisfy $\norm{T^n} = o(n^{\beta})$? 
	\end{q}
	
	In \cite{BonMul}, the authors constructed, for each $0 < \epsilon < 1$, a uniformly Kreiss bounded operator satisfying $\norm{T^n} \ge Cn^{1-\epsilon}$ for all $n \in \mathbb{N}$. Our approach to Question \ref{qbKreiss} relies on this construction; in what follows, we present it while extending it to the $\beta$-Kreiss setting, thereby providing a negative answer to this question.
	
	Let $\ell^2_N$ denote the space $\mathbb{C}^{2N}$ endowed with the norm $\norm{(x(j))_{j=1}^{2N}}_2 = \left( \sum_{j=1}^{2N} |x(j)|^2 \right)^{1/2}$. For $j \in \{ 1, \dots, 2N \}$, let $e_j \in \mathbb{C}^{2N}$ be the standard basis vectors. For $r>0$, we define the weights:
	\begin{equation*}
		\omega_j = \begin{cases} j^r, & \text{if } j=1,\dots,N, \\
			\frac{N^{2r}}{(2N-j+1)^r}, & \text{if }  j=N+1,\dots,2N,
		\end{cases}
	\end{equation*}
	and consider the weighted shift $T_N$ on $\ell^2_N$ given by:
	\begin{equation*}
		T_N e_j = \begin{cases}  \frac{\omega_{j+1}}{\omega_j} e_{j+1}, & \text{if } j=1,\dots,2N-1, \\
			0, &\text{if } j = 2N.
		\end{cases}
	\end{equation*}
	Since $\norm{T_N^m} = \max_{1 \le j \le 2N -m} \{ \frac{\omega_{j+m}}{\omega_j} \}$, it follows that:
	\begin{equation*}
		\norm{T_N^{m}} = \begin{cases} (m+1)^{r}, & \text{if } m= 1, \dots, N, \\
			\frac{N^{2r}}{(2N-m)^{2r} }, & \text{if } m = N+1, \dots, 2N - 1.
		\end{cases}
	\end{equation*}
	
	For $n \in \mathbb{N}$, the norm of the Cesàro mean is given by:
	$$
	\|S^1_n(T_N)\| = \sup\left\{\, |\langle S^1_n(T_N)x,y\rangle| : x \in \ell^2_N, y \in \ell^2_N, \ \|x\|_2=\|y\|_2=1 \right\}.
	$$
	Let $x=\sum_{j=1}^{2N}\alpha_j e_j$ and $y=\sum_{j=1}^{2N}\beta_j e_j$ satisfy $\norm{x}_2=1$ and $\norm{y}_2=1$. We decompose \(x=x_1+x_2\) and $y=y_1+y_2$ where $x_1, y_1$ are supported on $\{1, \dots, N\}$ and $x_2, y_2$ on $\{N+1, \dots, 2N\}$. Since $\langle S^1_n(T_N)x_2,y_1\rangle = 0$, we have:
	$$
	|\langle S^1_n(T_N)x,y\rangle| \leq A_n+B_n+C_n, 
	$$
	where $A_n:=\bigl|\langle S^1_n(T_N)x_1,y_1\rangle\bigr|$, $B_n:=\bigl|\langle S^1_n(T_N)x_2,y_2\rangle\bigr|$, and $C_n:=\bigl|\langle S^1_n(T_N)x_1,y_2\rangle\bigr|$. Following \cite{BonMul}, we have:
	$$
	A_n \le \sum_{j=1}^N\sum_{j'= j}^{\min(N,j+n)} |\alpha_j||\beta_{j'}|\Big(\frac{j'}{j}\Big)^r, \quad B_n \le \sum_{j=1}^N\sum_{j'= \max(1,j-n)}^{j} |\alpha_{2N-j+1}||\beta_{2N-j'+1}| \left( \frac{j'}{j} \right)^r
	$$
	and
	\begin{equation*}
		C_n \leq \begin{cases}  
			N^{2r} \left(\sum_{j=1}^{N}j^{-2r}\right)^{1/2} \left(\sum_{l=1}^{N}l^{-2r}\right)^{1/2}, & \text{if } n+1 \ge \frac{N}{2},\\
			2\left(\sum_{j=N-n+1}^{N}|\alpha_j|\right) \left(\sum_{j'=N+1}^{N+n}|\beta_{j'}|\right), & \text{if } n+1 <\frac{N}{2}.
		\end{cases}
	\end{equation*}
	We observe that:
	\begin{equation}\label{Cnparticular}
		\left(\sum_{j=N-n+1}^{N}|\alpha_j|\right) \left(\sum_{j'=N+1}^{N+n}|\beta_{j'}|\right) \le (n+1)^{1/2}\norm{x_1}_2(n+1)^{1/2}\norm{y_2}_2 \le n+1.
	\end{equation}
	
	Let $V$ be the unilateral weighted backward shift on $\ell^2(\mathbb{N})$, defined by $Vf_1 := 0$ and $Vf_k := \big(\frac{k}{k-1}\big)^r f_{k-1}$ for $k > 1$. Let $z=\sum_{j=1}^{\infty} z_j f_j$ with $\norm{z}_2 = 1$. Then, as shown in \cite[Theorem 2.1]{BBMP}:
	\begin{align*}
		\sum_{j=0}^{n} \|V^j z\|_2^2 &= \sum_{j=0}^{n} \sum_{l=j+1}^{\infty} |z_{l}|^2 \left( \frac{l}{l-j} \right)^{2r} 
		\le \sum_{l=1}^{2n} |z_{l}|^2 {l}^{2r} \sum_{j=0}^{l-1} (l-j)^{-{2r}} + 2^{2r}(n+1).
	\end{align*}
	Since \begin{equation}\label{sumjpowpr}
		\sum_{j=0}^{l-1} (l-j)^{-{2r}} = \sum_{j=0}^{l-1} j^{-{2r}}\le \begin{cases}
			\frac{l^{1-2r}}{1-2r} & \text{if } 0<r<1/2, \\
			1+\frac{1}{2r -1} & \text{if } r>1/2,
		\end{cases}
	\end{equation} it follows that:
	\begin{equation*}
		\sum_{j=0}^{n} \|V^j z\|_2^2 \le \begin{cases}
			\big(\frac{1}{1-2r} + 2^{2r}\big)(n+1) & \text{if } 0<r<1/2, \\
			\big(\frac{1}{2r-1} + 1+ 2^{2r}\big)(n+1)^{2r} & \text{if } r>1/2.
		\end{cases}
	\end{equation*}
	By Jensen's inequality, we have:
	\begin{equation*}
		\sum_{j=0}^{n} \|V^j z\|_2  \le (n+1) \left(\frac{1}{n+1} \sum_{j=0}^{n} \|V^j z\|^2_2 \right)^{1/2} \le \begin{cases}
			C(n+1) & \text{if } 0 < r < 1/2, \\
			C(n+1)^{r + 1/2} & \text{if } r > 1/2,
		\end{cases}
	\end{equation*}
	where $C$ depends on $r$. Since $\norm{S^1_n(V)z}_2 \le \sum_{j=0}^{n} \|V^j z\|_2$, it follows that $V$ is Cesàro bounded if $0 < r < 1/2$ and $C_1^{(r + 1/2)}$-bounded if $r > 1/2$. In particular, if $r > 1/2$, a simple comparison shows that $2r > r + 1 - 1/2$, and therefore $V$ is $C_1^{(2r)}$-bounded.
	
	The adjoint $V^*$, acting on $\ell^2(\mathbb{N})$, is given by the unilateral weighted forward shift $V^* f_j = \left( \frac{j+1}{j} \right)^{r} f_{j+1}$. For $u = \sum \delta_j f_j \in \ell^2(\mathbb{N})$ and $v = \sum \gamma_j f_j \in \ell^q(\mathbb{N})$ with $\|u\|_2 = \|v\|_2 = 1$ and non-negative coefficients, there exists a constant $C$ such that:
	\begin{equation}\label{estiV*}
		\sum_{j=1}^{\infty} \sum_{j' = j}^{\min(N, j+n)} \delta_j \gamma_{j'} \left( \frac{j'}{j} \right)^{r} = |\langle S^1_n(V^*)u, v \rangle| \le \|S^1_n(V)\| \le \begin{cases}
			C(n+1) &\text{if } 0 < r < 1/2, \\
			C(n+1)^{2r} &\text{if } r > 1/2.
		\end{cases}
	\end{equation}

	Applying \eqref{estiV*}, we immediately obtain:
	\begin{equation*}
		A_n \le \begin{cases} C(n+1), &\text{if } 0 < r < 1/2, \\
			C(n+1)^{2r}, &\text{if } r > 1/2,
		\end{cases}
		\quad \text{and} \quad B_n \le \begin{cases} C(n+1), &\text{if } 0 < r < 1/2, \\
			C(n+1)^{2r}, &\text{if } r > 1/2.
		\end{cases}
	\end{equation*}
	Furthermore, by applying \eqref{sumjpowpr} for $n+1 \ge \frac{N}{2}$ and \eqref{Cnparticular} for $n+1 < \frac{N}{2}$, we have:
	\begin{equation*}
		C_n \le \begin{cases} C(n+1), &\text{if } 0 < r < 1/2, \\
			C(n+1)^{2r}, &\text{if } r > 1/2.
		\end{cases}
	\end{equation*}
	This demonstrates that $T_N$ is Cesàro bounded if $0 < r < 1/2$ and $C_1^{(2r)}$-bounded if $r > 1/2$.

	Given $\beta > 0$, set $r = \beta/2$. On the space
	
	$$\mathcal{H} := \bigoplus_{N=1}^{\infty} \ell^2_N = \left\{ x = (x_N)_{N \ge 1} : x_N \in \ell^2_N \text{ and } \|x\|_{\mathcal{H}}^2 := \sum_{N=1}^{\infty} \|x_N\|_2^2 < \infty \right\},
	$$
	we define the block-diagonal operator $T$ by:
	\begin{equation*}
		T = \bigoplus_{N=1}^{\infty} T_N,
	\end{equation*}
	which acts on any $x = (x_N)_{N \in \mathbb{N}} \in \mathcal{H}$ as $T(x_1, x_2, \dots) = (T_1 x_1, T_2 x_2, \dots)$.

	The block-diagonal structure of $T$ implies that for any $n \in \mathbb{N}$, its $n$-th power and its first-order Cesàro mean are also block-diagonal, specifically:
	$$
	T^n = \bigoplus_{N=1}^{\infty} T_N^n \quad \text{and} \quad S_n^1(T) = \bigoplus_{N=1}^{\infty} S_n^1(T_N).
	$$
	Consequently, the norm of these operators can be computed as the supremum of the norms of their components:
	$$
	\|T^n\| = \sup_{N \ge 1} \|T_N^n\| \quad \text{and} \quad \|S_n^1(T)\| = \sup_{N \ge 1} \|S_n^1(T_N)\|.
	$$

	Then $T \in B(\mathcal{H})$ and:
	\begin{equation*}   
		\|S^1_n(T)\| = \sup_N \|S^1_n(T_N)\| \le \begin{cases} C(n+1), &\text{if } \beta <1, \\
			C(n+1)^{\beta}, &\text{if } \beta >1. 
		\end{cases}
	\end{equation*} 
	Now assume that $\beta >1$. Since $T$ is positive, Proposition \ref{positiveKreiss} implies $T$ is uniformly $\beta$-Kreiss bounded. Finally, for all $N \in \mathbb{N}$:
	$$
	\|T^{2N-1}\| \geq \|T_N^{2N-1}\| = N^{\beta} = \frac{1}{2^{\beta}}(2N)^{\beta}
	$$
	and
	$$
	\|T^{2N}\| \geq \|T_{N+1}^{2N}\| \geq \frac{\|T_{N+1}^{2N+1}\|}{\|T_{N+1}\|} > \frac{1}{2^{3\beta/2}}(2N+1)^{\beta}.
	$$
	This proves $\|T^N\| \geq C(N+1)^{\beta}$ for all $N \in \mathbb{N}$, answering Question \ref{qbKreiss} negatively:
	
	\begin{prop}
		For each $\beta > 1$, there exists a positive, uniformly $\beta$-Kreiss bounded operator $T_{\beta}$ on $\ell^2(\mathbb{N})$ such that $\norm{T_{\beta}^N} \ge CN^{\beta}$ for all $N \in \mathbb{N}$.
	\end{prop}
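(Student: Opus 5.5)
The plan is to take $T_\beta := T = \bigoplus_{N\ge1} T_N$, the block-diagonal weighted shift built above with $r=\beta/2$ on $\mathcal{H}=\bigoplus_N \ell^2_N$. We then transport it to $\ell^2(\mathbb{N})$ by a unitary that enumerates the standard basis vectors of the blocks consecutively. Positivity, norms and resolvent estimates are preserved under this unitary identification, since the unitary maps standard basis vectors to standard basis vectors and so is a lattice isomorphism. It therefore suffices to verify the three properties on $\mathcal{H}$.

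\emph{Step 1: boundedness and positivity.} Each $T_N$ is a weighted shift with nonnegative weights $\omega_{j+1}/\omega_j$. These are bounded uniformly in $N$. For $j\le N-1$ the weight is $((j+1)/j)^r\le 2^r$. At $j=N$ it is $N^{2r}/(N^r\cdot N^r)=1$. For $j>N$ it is $((2N-j+1)/(2N-j))^r\le 2^r$. Hence $\|T\|=\sup_N\|T_N\|\le 2^r$, and $T$ maps the positive cone to itself.

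\emph{Step 2: $C_1^{(\beta)}$-boundedness and the Kreiss property.} Here we invoke the estimates on $A_n,B_n,C_n$ established above. For $r=\beta/2>1/2$ they give $\|S_n^1(T_N)\|\le C(n+1)^{2r}=C(n+1)^\beta$ with $C$ independent of $N$, and hence $\|S_n^1(T)\|\le C(n+1)^\beta$. This is the main point to check carefully. The constants in \eqref{estiV*}, in \eqref{sumjpowpr} and in the $C_n$ bound must not depend on $N$. In the regime $n+1\ge N/2$ the bound on $C_n$ reads $N^{2r}\sum_{j\le N} j^{-2r}\le C N^{2r}\le C' (n+1)^{2r}$, which is the place where $n\gtrsim N$ is used. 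In the regime $n+1<N/2$, \eqref{Cnparticular} gives $C_n\le 2(n+1)\le 2(n+1)^\beta$ since $\beta>1$. Once uniform $C_1^{(\beta)}$-boundedness is in hand, Proposition~\ref{positiveKreiss}, applied in the Banach lattice $\mathcal{H}$ (equivalently $\ell^2$), yields that $T$ is uniformly $\beta$-Kreiss bounded.

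\emph{Step 3: the lower bound.} We use $\|T^m\|\ge\|T_N^m\|$ for every $N$ and the explicit formula for $\|T_N^m\|$. For odd exponents $m=2N-1$, the ratio $\omega_{2N}/\omega_1=N^{2r}=N^\beta$ gives $\|T^{2N-1}\|\ge N^\beta\ge 2^{-\beta}(2N-1)^\beta$. For even exponents $m=2N$, we use the block $T_{N+1}$, whose $(2N+1)$-th power has norm $(N+1)^\beta$. Submultiplicativity gives $\|T_{N+1}^{2N}\|\ge \|T_{N+1}^{2N+1}\|/\|T_{N+1}\|\ge 2^{-r}(N+1)^\beta\ge c(2N)^\beta$. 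The small cases are covered since $\|T^m\|>0$ for all $m$. Combining both parities gives $\|T^N\|\ge C N^\beta$ for all $N$, which completes the proof. The only delicate step is the $N$-uniformity in Step 2; Steps 1 and 3 are direct computations.
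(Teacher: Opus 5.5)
Your proposal is correct and follows the paper's argument essentially step for step. It uses the same block-diagonal weighted shift $\bigoplus_N T_N$ with $r=\beta/2$, the $N$-uniform bound $\|S_n^1(T)\|\le C(n+1)^{\beta}$ obtained from the $A_n$, $B_n$, $C_n$ estimates, Proposition~\ref{positiveKreiss} to pass to uniform $\beta$-Kreiss boundedness, and the lower bounds from $\|T_N^{2N-1}\|$ and $\|T_{N+1}^{2N+1}\|/\|T_{N+1}\|$. The details you add, namely the uniform bound $\|T_N\|\le 2^{r}$, the lattice-preserving identification of $\mathcal{H}$ with $\ell^2(\mathbb{N})$, and the explicit check that the constants do not depend on $N$, are left implicit in the paper, and you handle them correctly.
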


	\section{Examples}\label{SectionExemple}
	
	\subsection{Proof of Proposition \ref{PropNevExample}}\label{SectionExempleNev}

	Let $Y$ be the Banach space of analytic functions on $\mathbb{D}$ that are continuous on $\overline{\mathbb{D}}$, endowed with the norm
	$$
	\|f\|_Y = \|f\|_{\infty} + \|f'\|_1,
	$$
	where
	$$
	\|f\|_{\infty} = \sup_{|z| \le 1} |f(z)| \quad \text{and} \quad \|f'\|_1 = \int_{-\pi}^{\pi} |f'(e^{it})| dt.
	$$
	It turns out that $\|\cdot\|_Y$ is an algebra norm, meaning there exists a constant $C_Y > 0$ such that $\|fg\|_Y \le C_Y \|f\|_Y \|g\|_Y$.

	Let $T \in B(Y)$ be the multiplication operator defined by 
	$$
	(Tf)(z) = m(z)f(z) \quad \text{with} \quad m(z) := \frac{1+z}{2}.
	$$
	Then $T$ is Kreiss bounded and $2$-Ritt.

	Indeed let $|\lambda| \ge 1$ with $\lambda \neq 1$. For any $f \in Y$, the resolvent operator is given by
	$$
	R(\lambda, T)f(z) = (\lambda - m(z))^{-1}f(z).
	$$
	We have $\|R(\lambda, T)f\|_Y \le C_Y \|(\lambda - m)^{-1}\|_Y \|f\|_Y$. Thus, it suffices to estimate
	$$
	\|(\lambda - m)^{-1}\|_{\infty} \quad \text{and} \quad \|((\lambda - m)^{-1})'\|_1 = \frac{1}{2} \|(\lambda - m)^{-2}\|_1.
	$$
	First, using the residue formula, for $r > 1$ and $t \in [-\pi, \pi]$, or $r = 1$ and $t \neq 0$, we have:
	$$
	\int_{-\pi}^{\pi} \left| re^{it} - \frac{1+e^{i\theta}}{2} \right|^{-2} d\theta = \frac{2\pi}{r(r-\cos(t))}.
	$$
	Consequently, for $|\lambda| > 1$, we obtain:
	$$
	\|(\lambda - m)^{-2}\|_1 = \int_{-\pi}^{\pi} \left| \lambda - m(e^{i\theta}) \right|^{-2} d\theta \le \frac{C}{|\lambda|-1}.
	$$
	For $\lambda = e^{it}$ with $t \neq 0$, we have:
	$$
	\|(\lambda - m)^{-2}\|_1 = \int_{-\pi}^{\pi} \left| e^{it} - m(e^{i\theta}) \right|^{-2} d\theta = \frac{2\pi}{1-\cos(t)} \le \frac{C}{|\lambda - 1|^2}.
	$$
	Next, we estimate the $L^\infty$-norm. For $r > 1$ and $t, \theta \in [-\pi, \pi]$, we have:
	\begin{align*}
		\left| re^{it} - m(e^{i\theta}) \right|^{-1} &= \left| re^{it} - \frac{e^{i\theta}+1}{2} \right|^{-1} = \left| re^{i(t-\frac{\theta}{2})} - \cos\left( \frac{\theta}{2} \right) \right|^{-1} \\
		& \le \Big|r - cos\Big( \frac{\theta}{2}\Big) \Big|^{-1} \le \frac{1}{r-1}.
	\end{align*}
	For $t \in [-\pi, \pi] \setminus \{0\}$ and $\theta \in [-\pi, \pi]$, we have:
	$$
	|e^{it} - m(e^{i\theta})|^{-1} = 2 \left| (2e^{it}-1) - e^{i\theta} \right|^{-1} \le \frac{2}{\left| |2e^{it}-1| - 1 \right|} \le \frac{C}{|e^{it}-1|^2}.
	$$
	Therefore, for $|\lambda| > 1$, $\|(\lambda - m)^{-1}\|_{\infty} \le \frac{C}{|\lambda|-1}$, and for $\lambda \in \mathbb{T} \setminus \{1\}$, $\|(\lambda - m)^{-1}\|_{\infty} \le \frac{C}{|\lambda-1|^2}$.
	
	Finally, we obtain:
	$$
	\|R(\lambda, T)\|_{B(Y)} \le \frac{C}{|\lambda| - 1}, \quad |\lambda| > 1,
	$$
	which shows that $T$ is Kreiss bounded, and
	$$
	\|R(\lambda, T)\|_{B(Y)} \le \frac{C}{|\lambda - 1|^2}, \quad \lambda \in \mathbb{T} \setminus \{1\},
	$$
	which shows that $T$ is $2$-Ritt.

	Furthermore,  Moreover, it satisfies:
	\begin{equation}\label{estiexamnev} 
		\frac{1}{C} n^{\frac{1-k}{2}} \le \|T^n(I-T)^k\|_{B(Y)} \le C n^{\frac{1-k}{2}}, \quad k \ge 0
	\end{equation}
	where $C$  depends only on $k$. 
	
	Indeed by Theorem \ref{Nevanlinna}, we have the upper bound:
	$$
	\|T^n(I-T)^k\|_{B(Y)} \le C n^{\frac{1-k}{2}}, \quad k \in \mathbb{N},
	$$
	where $C$ depends only on $k$. To establish the lower bound, let $\mathbf{1}$ denote the constant function equal to $1$ on $\mathbb{D}$. Then,
	$$
	T^n(I-T)^k(\mathbf{1})(z) = m(z)^n(1-m(z))^k,
	$$
	so that
	$$
	\|T^n(I-T)^k(\mathbf{1})\|_Y = \|m^n(1-m)^k\|_{\infty} + \|(m^n(1-m)^k)'\|_1.
	$$
	First, we show that:
	\begin{equation}\label{mnorminfty}
		\|m^n(1-m)^k\|_{\infty} \ge \frac{1}{C} n^{-k/2},
	\end{equation}
	with $C$ depending only on $k$. We note that $|m(e^{i\theta})^n(1-m(e^{i\theta}))^k| = |\cos(\theta/2)|^n |\sin(\theta/2)|^k$ so that $\norm{m^n(1-m)^k}_{\infty} = \underset{\theta \in [0,\pi/2]}\sup{\cos^n(\theta)\sin^k(\theta)}$. Let $g(\theta) := \cos^n(\theta)\sin^k(\theta)$ for $\theta \in [0, \pi/2]$. The function $f(\theta) := \log(g(\theta)) = n\log(\cos\theta) + k\log(\sin\theta)$ satisfies:
	$$
	f'(\theta) = -n\tan(\theta) + k\cot(\theta), \quad \theta \in (0, \pi/2).
	$$
	The maximum of $f$ (and thus of $g$) is attained at $\theta_0 = \theta_0(n,k)$ satisfying $\tan^2(\theta_0) = k/n$. This implies $\cos^2(\theta_0) = \frac{n}{n+k}$ and $\sin^2(\theta_0) = \frac{k}{n+k}$. We then have:
	$$ 
	\|m^n(1-m)^k\|_{\infty} = g(\theta_0) = \left(\frac{n}{n+k}\right)^{n/2} \left(\frac{k}{n+k}\right)^{k/2} = \frac{n^{n/2}k^{k/2}}{(n+k)^{(n+k)/2}},
	$$
	which yields \eqref{mnorminfty}. Moreover, the derivative is given by:
	\begin{align*}
		(m^n(1-m)^k)'(z) &= nm'(z)m^{n-1}(z)(1-m(z))^k - km'(z)m^n(z)(1-m(z))^{k-1} \\
		&= m'(z)m^{n-1}(z)(1-m(z))^{k-1} \big( n(1-m(z)) - km(z) \big) \\
		&= \frac{n}{2} \left(\frac{1+z}{2}\right)^{n-1} \left(\frac{1-z}{2}\right)^{k-1} \left( 1 - \left(1+\frac{k}{n}\right)\frac{z+1}{2} \right),
	\end{align*}
	so that for $t \in [-\pi, \pi]$:
	$$
	(m^n(1-m)^k)'(e^{it}) = \frac{n}{2} \cos^{n-1}\left(\frac{t}{2}\right) \sin^{k-1}\left(\frac{t}{2}\right) \left( 1 - \left(1+\frac{k}{n}\right)\frac{e^{it}+1}{2} \right).
	$$
	Since
	\begin{align*}
		\left| 1 - \left(1+\frac{k}{n}\right)\frac{1+e^{it}}{2} \right| &\ge \left| \text{Im}\left( 1 - \left(1+\frac{k}{n}\right)\frac{1+e^{it}}{2} \right) \right| \\
		&= \left( 1+\frac{k}{n} \right) \frac{|\sin(t)|}{2} \ge \frac{|\sin(t)|}{2} =   \left| \cos\left(\frac{t}{2}\right) \right|\left| \sin\left(\frac{t}{2}\right) \right|  , 
	\end{align*}
	it follows that:
	\begin{align*}
		\|(m^n(1-m)^k)'\|_1 &\ge \frac{n}{2} \int_{0}^{\pi}  \left| \cos\left(\frac{t}{2}\right) \right|^{n} \left| \sin\left(\frac{t}{2}\right) \right|^{k} dt \\
		&\ge \frac{2^{k-1}n}{\pi^{k}} \int_{0}^{\pi} t^k \left| \cos\left(\frac{t}{2}\right) \right|^{n} dt \\
		&= \frac{2^{k-1}n}{\pi^{k}n^{(k+1)/2}} \int_{0}^{{\pi}\sqrt{n}} t^k \left| \cos\left(\frac{t}{2\sqrt{n}}\right) \right|^{n} dt.
	\end{align*}
	As $n \to \infty$, the integral converges to $\int_{0}^{\infty} t^k e^{-t^2/8} dt > 0$. Consequently, there exists a constant $C$ depending only on $k$ such that:
	$$
	\|(m^n(1-m)^k)'\|_1 \ge \frac{1}{C} n^{\frac{1-k}{2}}.
	$$
	Combined with \eqref{mnorminfty}, this establishes the lower bound in \eqref{estiexamnev}.

	\subsection{An $m$-Kreiss operator which is not $C_1^m$-bounded}\label{mKreissnotC_1^m}
	
	Let $\beta > 1$ and denote by $Y_m$ the space of analytic functions on $\mathbb{D}$ which are continuous on $\overline{\mathbb{D}}$ up to their $m$-th derivative, endowed with the norm:
	$$
	\norm{f}_{Y_m} := \sum_{j=0}^{m-1} \norm{f^{(j)}}_{\infty} + \norm{f^{(m)}}_1.
	$$
	Note that $\|\cdot\|_m$ is an algebra norm; thus, there exists a constant $C_m > 0$ such that for all $f, g \in Y_m$, $\|fg\|_m \le C_{Y_m} \|f\|_m \|g\|_m$. 
	The multiplication operator $M_z$ on $Y_m$, defined by $(M_z f)(z) = zf(z)$, satisfies:
	\begin{enumerate}[label = (\roman*)]
		\item $M_z$ is $m$-Kreiss bounded,
		\item $\|S^1_n(M_z)\|_{B(Y_m)} \ge C n^m \log(n+1)$.
	\end{enumerate}
	
	\begin{proof}
		$(i)$: Let $|\lambda| > 1$. The resolvent operator $R(\lambda, M_z)$ is the multiplication operator by the function $g_{\lambda}(z) = (\lambda - z)^{-1}$. We have:
		$$
		\|R(\lambda, M_z)f\|_m \le C_{Y_m} \|f\|_{Y_m} \|(\lambda - z)^{-1}\|_m.
		$$
		To show that $M_z$ is $m$-Kreiss bounded, we must estimate, for $|\lambda| > 1$:
		$$
		\|(\lambda-z)^{-1}\|_{\infty}, \dots, \|(\lambda-z)^{-m}\|_{\infty} \quad \text{and} \quad \|((\lambda-z)^{-1})^{(m)}\|_1.
		$$
		Note that the $m$-th derivative of $(\lambda-z)^{-1}$ is $m!(\lambda-z)^{-(m+1)}$. 
		Let $\lambda = re^{it}$ with $r > 1$. For any $j \in \{1, \dots, m\}$ and $\theta \in [-\pi, \pi]$, we have:
		$$
		|re^{it} - e^{i\theta}|^{-j} \le (r-1)^{-j} = \frac{1}{(|\lambda|-1)^j}.
		$$
		Furthermore, for the $L^1$ term, we have, by Lemma \eqref{LemmaIntRes}:
		$$
		\int_{-\pi}^{\pi} |re^{it} - e^{i\theta}|^{-(m+1)} d\theta \le \frac{C_m}{(r-1)^m}.
		$$
		Summing these estimates, we obtain $\|(\lambda - z)^{-1}\|_{Y_m} \le \frac{C}{(|\lambda|-1)^m}$, which proves that $M_z$ is $m$-Kreiss bounded.
		
		$(ii)$: To establish the lower bound, we test the operator on the constant function $\mathbf{1} \in Y_m$. We have:
		$$
		\|S_n^1(M_z)\|_{B(Y_m)} \ge \|S_n^1(M_z)(\mathbf{1})\|_m = \left\| \sum_{j=0}^n z^j \right\|_m \ge \left\| \left(\sum_{j=0}^n z^j\right)^{(m)} \right\|_1.
		$$
		Let $D_n(z) = \sum_{j=0}^n z^j$ denote the Dirichlet-type kernel. It is a well-known result in Fourier analysis that the $L^1$-norm of the $j$-th derivative of $D_n$ satisfies:
		$$
		C^{-1} n^j \log(n+1) \le \|D_n^{(j)}\|_1 \le C n^j \log(n+1)
		$$
		for every $j \in \mathbb{N} \cup \{0\}$. Taking $j=m$, we obtain:
		$$
		\|S_n^1(M_z)\|_{B(Y_m)} \ge C^{-1} n^m \log(n+1),
		$$
		which completes the proof.
	\end{proof}

	\subsection{On the sharpness of the $m$-Ritt growth estimates}
	
	We are interested in $m$-Ritt operators with $m \in \mathbb{N}$. It is known that in this case, 
	\begin{equation}\label{mRittesti}
		\norm{T^n(I-T)^k} = O(n^{m-k-1}), \quad k \in \{0, \dots, m-2\}.
	\end{equation}
	As noted in \cite{BorSpi2}, for any $m \in \mathbb{N}$, there exists an $m$-dimensional space $X$ and an operator $T \in B(X)$ that is $m$-Ritt such that 
	$$
	\norm{T^n} \le n^{\alpha -1}.
	$$
	
	Indeed, let $T_m \in \mathbb{C}^{m \times m}$ be defined by
	$$
	T_m =
	\begin{pmatrix}
		1 & 0 & 0 & \cdots & 0 \\
		1 & 1 & 0 & \cdots & 0 \\
		0 & 1 & 1 & \ddots & \vdots \\
		\vdots & \ddots & \ddots & \ddots & 0 \\
		0 & \cdots & 0 & 1 & 1
	\end{pmatrix},
	$$
	then according to \cite[Lemma]{BorSpi2}, its resolvent is given by
	$$
	(\lambda I-T_m)^{-1}=
	\begin{pmatrix}
		\frac{1}{\lambda-1} & 0 & 0 & \cdots & 0 \\
		\frac{1}{(\lambda-1)^2}& \frac{1}{\lambda-1} & 0 & \cdots & 0 \\
		\frac{1}{(\lambda-1)^3} & \frac{1}{(\lambda-1)^2} & \frac{1}{\lambda-1} & \ddots & \vdots \\
		\vdots & \vdots & \vdots & \ddots & 0 \\
		\frac{1}{(\lambda-1)^m} & \frac{1}{(\lambda-1)^{m-1}} & \frac{1}{(\lambda-1)^{m-2}} & \cdots & \frac{1}{\lambda-1}
	\end{pmatrix},
	$$
	which shows that $T_m$ is $m$-Ritt. Furthermore, writing for $m \ge 1$ and $n \ge 1$:
	$$
	J_{m,n} := \begin{pmatrix}
		1 & 0 & 0 & \cdots & 0 \\
		\binom{n}{1} & 1 & 0 & \cdots & 0 \\
		\binom{n}{2} & \binom{n}{1} & 1 & \ddots & \vdots \\
		\vdots & \vdots & \ddots & \ddots & 0 \\
		\binom{n}{m-1} & \binom{n}{m-2} & \cdots & \binom{n}{1} & 1
	\end{pmatrix} \in \mathbb{C}^{m \times m},
	$$
	where $\binom{n}{l} := 0$ if $l > n$, we clearly have $\norm{J_{m,n}} \le C n^{m-1}$. 
	Again, according to \cite[Lemma]{BorSpi2}, for $n \ge m$, we have
	$$
	T_m^n = J_{m,n},
	$$
	and then, for $k \le m-1$, 
	\begin{equation*}
		(T_m-I)^k T_m^n = \left(
		\begin{array}{cc}
			0 & 0 \\
			J_{m-k,n} & 0
		\end{array}
		\right),
	\end{equation*}
	which implies that $\norm{(I-T_m)^k T_m^n} \ge C n^{m-k-1}$.

	This demonstrates that the estimate \eqref{mRittesti} is sharp for $k \le m-2$.

	\subsection{A Cesàro-$m$ bounded operator which is not $m$-Kreiss}
	
	The Assani example $A = 
	\begin{pmatrix}
		-1 & 2 \\
		0 & -1 
	\end{pmatrix}$
	provides an example of a Cesàro bounded operator that is not Kreiss bounded. Following this idea, let 
	$$
	B_m =
	\begin{pmatrix}
		-1 & 0 & 0 & \cdots & 0 \\
		1 & -1 & 0 & \cdots & 0 \\
		0 & 1 & -1 & \ddots & \vdots \\
		\vdots & \ddots & \ddots & \ddots & 0 \\
		0 & \cdots & 0 & 1 & -1
	\end{pmatrix} \in \mathbb{C}^{(m+1) \times (m+1)}.
	$$
	Then, by \cite[Lemma 3.1 (b)]{BorSpi2}, the powers of $B_m$ are given by:
	$$
	B_m^n = 
	\begin{pmatrix}
		(-1)^{n} & 0 & \cdots & 0 \\
		(-1)^{n-1}\binom{n}{1} & (-1)^{n} & \cdots & 0 \\
		(-1)^{n-2}\binom{n}{2} & (-1)^{n-1}\binom{n}{1} & \ddots & \vdots \\
		\vdots & \vdots & \ddots & 0 \\
		(-1)^{n-m+1}\binom{n}{m} & (-1)^{n-m+2}\binom{n}{m-1} & \cdots & (-1)^{n}
	\end{pmatrix}.
	$$
	Now, since 
	$$
	\sum_{j=0}^n (-1)^{j}\binom{j}{m} = P_{m}(n),
	$$
	where $P_m$ is a polynomial of degree $m$, and for each $l \in \{ 1, \dots, m \}$,
	$$
	\sum_{j=0}^n (-1)^{j}\binom{j}{m-l} = Q_{m,l}(n),
	$$
	where $Q_{m,l}$ is a polynomial of degree at most $m-1$, it follows that there exists a constant $C > 0$ such that
	$$
	\frac{n^{m}}{C} \le \norm{S^1_n(B_m)} \le C n^{m}.
	$$ 
	Consequently, $B_m$ is Cesàro-$m$ bounded, which further implies that it is also $C_1^{(m)}$-bounded.
	
	Moreover, $B_m$ is not $m$-Kreiss bounded. Indeed, according to \cite[Theorem 4.2.]{BorSpi}, if $B_m$ were $m$-Kreiss bounded, it would satisfy:
	$$
	\norm{B_m^n} \le C_m n^{m-1},
	$$
	where $C_m$ depends only on $m$. However, the explicit form of $B_m^n$ shows that
	$$ 
	\norm{B_m^n} \ge D n^{m}
	$$
	for some $D > 0$. Therefore, $B_m$ cannot be $m$-Kreiss bounded.

	\subsection{Optimality of the growth bound for $(m,1)$-RK operators} 
	Let $m \in \mathbb{N}$. It is known that if $T$ is an $(m,1)$-RK operator, then 
	\begin{equation}\label{(m,1)-RK}
		\norm{T^{n}} = O(n^m \log(n+1)).
	\end{equation}
	We now provide an example showing that this estimate is sharp. 
	According to \cite{BorSpi2}, for any $N \in \mathbb{N}$, there exists a bounded operator $S_{N}$ on $X_N := (\mathbb{C}^{N^2}, \norm{\cdot}_N)$ which is $(m,1)$-RK with constant $R_{m,1}(S_N)$ depending only on $m$ (and thus independent of $N$), such that 
	$$
	\norm{S_N^N}_N \ge C N^{m} \log(N+1), \quad N \ge m+1,
	$$
	where $C$ depends only on $m$. The fact that $R_{m,1}(B_N)$ depends only on $m$ follows from  \cite[Lemma 3.1.]{BorSpi2}.
	Let $X$ be the $\ell^2$-direct sum of the spaces $X_N$:
	$$ 
	X := \left( \bigoplus_{N=m+1}^{\infty} X_N \right)_{\ell^2},
	$$
	endowed with the norm $\norm{x} = \left( \sum_{N=m+1}^{\infty} \norm{x_N}_N^2 \right)^{1/2}$. Let $B$ be the operator defined on $X$ by
	$$
	S := \bigoplus_{N=m+1}^{\infty} S_N.
	$$
	Then $S \in B(X)$ is $(m,1)$-RK since $\sup_N C_{RK}(B_N) < \infty$. Moreover, for each $N \ge m+1$, we have:
	$$
	\norm{S^N} \ge \norm{S_N^N}_N \ge C N^{m} \log(N+1).
	$$
	This demonstrates that the estimate \eqref{(m,1)-RK} is sharp.

	\subsection{On a generalization of the Tomilov-Zemanek example to higher order Cesàro means}
	
	Let $X$ be a Banach space. For $N \in \mathbb{N}$, we consider the space $\mathcal{X}_{N+1} = \bigoplus_{k=0}^N X$ equipped with the norm
	$$
	\|x_0 \oplus x_1 \oplus \dots \oplus x_N\|_{\mathcal{X}_{N+1}} := \sqrt{\sum_{j=0}^N \norm{x_j}^2}.
	$$
	Define the bounded linear operator $\mathcal{T}_{N+1}$ on $\mathcal{X}_{N+1}$ by the operator matrix:
	$$
	\mathcal{T}_{N+1} = \begin{pmatrix}
		T & 0 & 0 & \cdots & 0 \\
		I-T & T & 0 & \cdots & 0 \\
		0 & I-T & T & \cdots & 0 \\
		\vdots & \vdots & \ddots & \ddots & \vdots \\
		0 & 0 & \cdots & I-T & T
	\end{pmatrix}.
	$$
	Direct computations show that the $n$-th power of $\mathcal{T}_{N+1}$ is given by:
	$$
	\mathcal{T}_{N+1}^n = \begin{pmatrix}
		\rho_0(T) & 0 & 0 & \cdots & 0 \\
		\rho_1(T) & \rho_0(T) & 0 & \cdots & 0 \\
		\rho_2(T) & \rho_1(T) & \rho_0(T) & \cdots & 0 \\
		\vdots & \vdots & \vdots & \ddots & \vdots \\
		\rho_{N}(T) & \rho_{N-1}(T) & \rho_{N-2}(T) & \cdots & \rho_0(T)
	\end{pmatrix},
	$$
	where $\rho_i(T) := \binom{n}{i}T^{n-i}(I-T)^i$. Following the approach in \cite[Theorem 2.1]{TomZem}, we have the following result:
	
	\begin{prop}\label{PropNTomZem}
		For $N \ge 1$, let $\mathcal{T}_{N+1}$ be defined as above. Then:
		\begin{enumerate}
			\item $T$ is power bounded if and only if $\mathcal{T}_{N+1}$ is Cesàro-$N$ bounded.
			\item $(T^n )_{n\in \mathbb{N}}$ converges in the strong operator topology of $X$ if and only if the sequence $\big(n^{-N}S_n^{N}(\mathcal{T}_{N+1})\big)_{n\in \mathbb{N}}$ converges in the strong operator topology of of $\mathcal{X}_{N+1}$.
		\end{enumerate}   
	\end{prop}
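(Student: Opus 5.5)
The plan is to compute the Cesàro-$N$ sums of $\mathcal{T}_{N+1}$ block by block. Because the power $\mathcal{T}_{N+1}^n$ is lower triangular Toeplitz with entries $\rho_i(T)=\binom{n}{i}T^{n-i}(I-T)^i$, the mean $S_n^N(\mathcal{T}_{N+1})=\sum_{j=0}^n A_{n-j}^{N-1}\mathcal{T}_{N+1}^j$ is again lower triangular Toeplitz. Its $i$-th subdiagonal entry is
$$
\Sigma_i(n):=\sum_{j=0}^n A_{n-j}^{N-1}\binom{j}{i}T^{j-i}(I-T)^i .
$$
The first step is to find a closed form for $\Sigma_i(n)$ using generating functions. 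We have $\sum_j\binom{j}{i}z^jT^{j-i}(I-T)^i=z^i(I-T)^i(I-zT)^{-(i+1)}$, and multiplying by $\sum_m A_m^{N-1}z^m=(1-z)^{-N}$ gives the generating function
$$
\sum_n \Sigma_i(n)z^n=z^i(I-T)^i(I-zT)^{-(i+1)}(1-z)^{-N}.
$$
Writing $(1-z)I=(I-zT)-z(I-T)$ and expanding $(1-z)^{-N}$ against $(I-zT)^{-1}$ should identify $\Sigma_N(n)$ (and, by a telescoping/partial-fraction identity, the full $\Sigma_i$) as explicit combinations of $S_n^{\gamma}(T)$ and of $\binom{n}{\cdot}$-weighted terms. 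The key identity I expect is that the combined entries reduce to terms like $A_{n-N}^{N}I$ plus bounded combinations of Cesàro means of $T$. In the model case $N=1$ this is exactly the Tomilov--Zemánek computation: the $(1,0)$ entry of $\sum_{j\le n}\mathcal{T}_2^j$ is $\sum_j jT^{j-1}(I-T)=\sum_{j<n}T^j - nT^n$.

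For (i), assume first that $T$ is power bounded. Then every $\rho_i(T)$ is $O(n^i)$ trivially, and more precisely the $(N,0)$ entry behaves like $n^N$. I would bound $\|\Sigma_i(n)\|\le C\sum_j A_{n-j}^{N-1}\binom{j}{i}\|T^{j-i}(I-T)^i\|$. A crude bound gives $O(n^{N-1+i+1})$, which is too big for $i=N$, so instead I will use the closed form. In it, $\Sigma_i(n)$ is a finite sum of terms $\binom{n}{p}T^{q}$-type expressions and Cesàro sums of $T$ of order $\le N$, each of which is $O(n^N)$ when $T$ is power bounded. This yields Cesàro-$N$ boundedness of $\mathcal{T}_{N+1}$.

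Conversely, if $\|S_n^N(\mathcal{T}_{N+1})\|=O(n^N)$, then each entry $\Sigma_i(n)$ is $O(n^N)$. The plan is to invert the closed form: from the bottom entry $\Sigma_N(n)$, after subtracting the diagonal contributions (which are $S_n^N(T)=O(n^N)$ by the diagonal entry), isolate a term $c\,n^{N}T^{n}$ plus a remainder already known to be $O(n^N)$. This gives $\|T^n\|=O(1)$. For $N=1$ this is $nT^n=S_{n-1}^1(T)-\Sigma_1(n)$. \textbf{Identifying this isolation cleanly for general $N$ is the main obstacle}; I would first try induction on $N$. The upper-left $N\times N$ block of $\mathcal{T}_{N+1}$ is $\mathcal{T}_N$, and $S_n^{N}=\sum_j A_{n-j}^{0}S_j^{N-1}$ together with the inverse relation $S_n^{N-1}=S_n^{N}-S_{n-1}^{N}$ transfers bounds between orders. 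The risk is the loss of one power of $n$ in the differencing.

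For (ii), I will use the same closed form divided by $n^N$. If $T^n\to P$ strongly, then $T^n(I-T)\to0$ strongly, and the normalized Cesàro means of $T$ converge to $P$. Each entry of $n^{-N}S_n^N(\mathcal{T}_{N+1})$ is then a combination of quantities that converge strongly. Conversely, strong convergence of the normalized bottom entry, minus the strongly convergent remaining pieces (Cesàro means converge once power boundedness from (i) is known, via the uniform boundedness principle), yields strong convergence of $T^n x$.
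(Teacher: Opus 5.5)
Your generating-function setup is sound, but the converse of (i) has a genuine gap for $N\ge 2$, at exactly the step you flagged. The isolation step is where it fails: after you subtract multiples of $S_n^N(T)$ from the bottom entry, the remainder is \emph{not} already known to be $O(n^N)$, because it contains lower-order Cesàro sums with polynomial weights. Already for $N=2$ one has
\[
\Sigma_1(n)=2S_n^2(T)-(n+2)S_n^1(T),\qquad \Sigma_2(n)=3S_n^2(T)-(2n+4)S_n^1(T)+\binom{n+2}{2}T^n .
\]
The diagonal alone only gives $S_n^1(T)=S_n^2(T)-S_{n-1}^2(T)=O(n^2)$, so $(2n+4)S_n^1(T)$ is only $O(n^3)$. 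This is the same loss of one power that blocks your induction through the upper-left block $\mathcal{T}_N$.

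The missing idea is that you must also use the intermediate entries $\Sigma_1,\dots,\Sigma_{N-1}$ of the first column. The paper, which only writes out $N=2$, does this triangularly. First $\Sigma_0$ gives Cesàro-$2$ boundedness. Then $\Sigma_1$ gives $S_n^1(T)=O(n)$. Finally $\Sigma_2$ gives $\binom{n+2}{2}\|T^n\|=O(n^2)$.

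For general $N$, the identity $(I-zT)-z(I-T)=(1-z)I$ that you already wrote down does this in one stroke:
\[
\sum_{p=0}^N(-1)^p\binom{N}{p}z^p(I-T)^p(I-zT)^{-(p+1)}(1-z)^{-N}=(I-zT)^{-1}\Big((1-z)(I-zT)^{-1}\Big)^N(1-z)^{-N}=(I-zT)^{-(N+1)} .
\]
Hence $\sum_{p=0}^N(-1)^p\binom{N}{p}\Sigma_p(n)=\binom{n+N}{N}T^n$, and so $\binom{n+N}{N}\|T^n\|\le 2^N\|S_n^N(\mathcal{T}_{N+1})\|$.

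The forward direction also needs tightening. As stated, your bound relies on the unverified claim that $S_n^a(T)$ enters $\Sigma_i(n)$ with a coefficient of degree at most $N-a$. Expanding instead $z^i(I-T)^i=\big((I-zT)-(1-z)\big)^i$ gives
\[
\Sigma_i(n)=\sum_{p=0}^i(-1)^p\binom{i}{p}\sum_{j=0}^n A^{N-p-1}_{n-j}A^p_j\,T^j .
\]
Here the weights are nonnegative and sum to $A_n^N$, which gives $\|\Sigma_i(n)\|\le 2^i A_n^N\sup_j\|T^j\|$ directly.

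The converse of (ii) has a second, independent gap. The uniform boundedness principle gives only boundedness of the normalized Cesàro means of a power bounded $T$, not their strong convergence; power bounded operators need not be mean ergodic. So you cannot treat the pieces of the form $n^{-N}q(n)S_n^a(T)$ in your remainder as strongly convergent. You can either bootstrap convergence through $\Sigma_1,\dots,\Sigma_{N-1}$ as the paper does, or use the alternating combination above. The latter shows directly that $n^{-N}\binom{n+N}{N}T^nx$ converges whenever each $n^{-N}\Sigma_p(n)x$ does.

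The forward half of (ii) then follows from the weighted-average representation. For fixed $j$ the weights $A^{N-p-1}_{n-j}A^p_j/A^N_n$ tend to $0$, so each inner sum divided by $A_n^N$ converges strongly to $P$. Consequently $n^{-N}\Sigma_i(n)\to \delta_{i0}P/N!$ strongly.
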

	
	\begin{proof}
		We provide the proof for the case $N= 2$ (the $3 \times 3$ case). We have:
		$$
		S^2_n(\mathcal{T}_3) = \begin{pmatrix}
			S^2_n(\rho_0(T)) & 0 & 0 \\
			S^2_n(\rho_1(T)) & S^2_n(\rho_0(T)) & 0 \\
			S^2_n(\rho_2(T)) & S^2_n(\rho_1(T)) & S^2_n(\rho_0(T))
		\end{pmatrix}.
		$$
		Clearly, $S^2_n(\rho_0(T)) = S^2_n(T)$. For the first off-diagonal term, we compute:
		\begin{align*}
			S^2_n(\rho_1(T)) &= \sum_{j=0}^n (n-j+1)j T^{j-1}(I-T) = \sum_{j=0}^{n-1}(n-2j)T^j - nT^{n} \\
			&= 2 \sum_{j=0}^{n-1}(n-j+1)T^j - (n+2)\sum_{j=0}^{n-1} T^j - nT^n \\
			&= 2S_{n-1}^2(T) - (n+2)S_{n-1}^1(T) - nT^n. 
		\end{align*}
		For the second off-diagonal term, we have:
		\begin{align*}
			S^2_n(\rho_2(T)) &= \sum_{j=0}^n (n-j+1)\frac{j(j-1)}{2} T^{j-2}(I-T)^2 \\
			&= \sum_{j=0}^{n-1} (n - 3j - 1) T^j  + \frac{n(n-2)}{2}T^n \\
			&= 3\sum_{j=0}^{n-1}(n-j+1)T^j - (2n+4)\sum_{j= 0}^{n-1} T^j + \frac{n(n-2)}{2}T^n \\
			&= 3S_{n-1}^2(T) - (2n+4)S_{n-1}^1(T) + \frac{n(n-2)}{2}T^n.
		\end{align*}
		If $T$ is power-bounded, the above identities show that $n^{-2}S_n^2(\mathcal{T}_3)$ is uniformly bounded, so $\mathcal{T}_3$ is  Cesàro-$2$ bounded.
		
		Conversely, assume that $\mathcal{T}_3$ is  Cesàro-$2$ bounded. Let $x\in X$ with $\norm{x} = 1$. Since 
		$$
		S_n^2(\mathcal{T}_3)\begin{pmatrix}
			x \\
			0 \\
			0
		\end{pmatrix} = \begin{pmatrix}
			S_n^2(T)x \\
			S^2_n(\rho_1(T))x \\
			S^2_n(\rho_2(T))x
		\end{pmatrix},
		$$
		it follows that there exists a constant $C > 0$ such that
		$\norm{S_n^2(T)x} \le Cn^2$, $\norm{S^2_n(\rho_1(T))x} \le Cn^2$, and $\norm{S^2_n(\rho_2(T))x} \le Cn^2$. 
		
		The first estimate implies that $T$ is  Cesàro-$2$ bounded. This further implies that $\norm{S^1_n(T)x} \le C'n^2$ because $S_n^1(T) = S_n^2(T) - S_{n-1}^2(T)$. Using the previous expansion of $S_n^2(\rho_1(T))$, we have:
		$$ 
		\norm{n S_{n-1}^1(T)x} - \norm{2S_{n-1}^2(T)x - 2\sum_{j=0}^{n-1}T^jx} \le \norm{S_n^{2}(\rho_1(T))x} \le Cn^2,
		$$
		which leads to:
		$$
		\norm{nS_{n-1}^1(T)x} \le C'\left(n^2 + \norm{S_{n-1}^2(T)x} + \norm{S_{n-1}^1(T)x}\right) \le C''n^2. 
		$$ 
		Dividing by $n$, we obtain that $T$ is Cesàro bounded (i.e., $\norm{S_n^1(T)} = O(n)$). 
		
		In the same manner, using the fact that $T$ is Cesàro bounded and Cesàro-$2$ bounded, it follows that:
		$$
		\frac{n(n-2)}{2}\norm{T^nx} \le Cn^2 + 3\norm{S_{n-1}^2(T)x} + (2n+4)\norm{S_{n-1}^1(T)x} \le C'''n^2.
		$$
		This proves that $T$ is power bounded. Part $(2)$ regarding the strong convergence is obtained by following the same steps, replacing the norm estimates with strong operator topology convergence arguments.
	\end{proof}

	Following the ideas of \cite{TomZem}, we can now construct an operator $\mathcal{T}_{N+1}$ such that:
	\begin{enumerate}[label=(\roman*)]
		\item $\norm{\mathcal{T}^n_{N+1}} \ge Cn^{N}$,
		\item $\mathcal{T}_{N+1}$ is  Cesàro-$N$ bounded,
		\item $\big(n^{-N}S^{N}_n(\mathcal{T}_{N+1})\big)_{n\in \mathbb{N}}$ converges in the strong operator of $\mathcal{X}_N$,
		\item $\big(n^{-N}S^{N}_n(\mathcal{T}^*_{N+1})\big)_{n\in \mathbb{N}}$ does not converge in the in the strong operator of $\mathcal{X}^*_N$.
	\end{enumerate}
	
	Let $T$ be the forward shift on $\ell^2(\mathbb{N})$, which is power-bounded and converges strongly to $0$. Its adjoint $T^*$ (the backward shift) does not converge strongly. According to the proposition, $\mathcal{T}_{N+1}$ satisfies (ii) and (iii). Moreover, the growth condition (i) is satisfied since:
	$$\norm{\mathcal{T}_{N+1}^n} \ge \norm{\rho_{N}(T)} \ge \binom{n}{N}\sup_{\lambda \in \D} \big|\lambda^n(1-\lambda)^d\big| = 2^d \binom{n}{N} \ge Cn^{N}. $$
	
	Finally, for (iv), we observe that $\mathcal{T}^*_{N+1}$ is given by the upper triangular operator matrix:
	$$
	\mathcal{T}^*_{N+1} = \begin{pmatrix}
		T^* & I - T^* & 0 & \cdots & 0 \\
		0 & T^* & I - T^* & \cdots & 0 \\
		0 & 0 & T^* & \ddots & \vdots \\
		\vdots & \vdots & \ddots & \ddots & I - T^* \\
		0 & 0 & \cdots & 0 & T^*
	\end{pmatrix}.
	$$
	The same arguments used for $\mathcal{T}_{N+1}$ apply to this upper triangular structure. Consequently, the result of the Proposition \eqref{PropNTomZem} holds for $\mathcal{T}^*_{N+1}$ as well. Since $T^*$ does not converge strongly to $0$, it follows that the sequence $\big(n^{-N} S_n^N(\mathcal{T}^*_{N+1})\big)_{n \in \mathbb{N}}$ does not converge in the strong operator of $\mathcal{X}^*_N$.

	\section{Final remarks and open questions}

	The proves of \cite[Theorem 3.2. and 3.4]{MahRue} allows for the following slight improvement of Theorem \ref{Ritt}. 
	
	\begin{thm}\label{Ritt2}
		Let $T\in B(X)$ with $\sigma(T)\cap \mathbb{T}\subset \{1\}$. The following are equivalent:
		\begin{enumerate}[label=(\roman*)]
			\item $T$ is a Ritt operator;
			\item For some (or all) $m\in \mathbb{N}$ and $\alpha,\beta>0$ with $\alpha+\beta = m$, we have
			$\|R(\lambda,T)^m\| \le \frac{1}{|\lambda-1|^\alpha (|\lambda|-1)^\beta}$ for $1<|\lambda|<2$;
			\item $T$ is power-bounded and, for some (or all) $k\in \mathbb{N}$, $\|T^n(I-T)^k\| = O(n^{-k})$.
		\end{enumerate}
	\end{thm}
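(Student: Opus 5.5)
Since (i)$\Leftrightarrow$(iii) is contained in Theorem \ref{Ritt}, it suffices to show (i)$\Rightarrow$(ii) and that (ii), for some $m\in\N$ and some $\alpha,\beta>0$ with $\alpha+\beta=m$, implies (i). Note first that the hypothesis $\sigma(T)\cap\mathbb{T}\subset\{1\}$ is weaker than $\sigma(T)\subset\overline{\D}$. However, a bound of the form $\|R(\lambda,T)^m\|\le C(|\lambda|-1)^{-m}|\lambda-1|^{-\alpha}(|\lambda|-1)^{m-\beta}\cdots$ on $1<|\lambda|<2$ prevents spectrum from accumulating there. Hence, exactly as in \cite{MahRue}, I would begin by checking that $\sigma(T)\subset\overline{\D}$ under (ii). The idea is that $\|R(\lambda,T)\|\ge \operatorname{dist}(\lambda,\sigma(T))^{-1}$ and $\|R(\lambda,T)^m\|\ge \operatorname{dist}(\lambda,\sigma(T))^{-m}$ (spectral radius of $R(\lambda,T)^m$), so the bound in (ii) forces $\operatorname{dist}(\lambda,\sigma(T))\gtrsim (|\lambda-1|^\alpha(|\lambda|-1)^\beta)^{1/m}>0$ for $1<|\lambda|<2$. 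Points of $\sigma(T)$ with $|\mu|>1$ would then have to lie outside the annulus. Since the spectrum is compact and cannot be separated from $\overline{\D}$ by an empty annulus region without being an isolated part, I would handle this by assuming (as the statement implicitly does through (i)) $r(T)\le1$, or by noting that the resolvent estimate extends by the Neumann series.

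For (i)$\Rightarrow$(ii): a Ritt operator satisfies $\|R(\lambda,T)\|\le C|\lambda-1|^{-1}$ and $\|R(\lambda,T)\|\le C(|\lambda|-1)^{-1}$ (Kreiss). Moreover, Ritt operators are strongly Kreiss bounded, and in fact satisfy $\|R(\lambda,T)^m\|\le C|\lambda-1|^{-m}$ as well as $\|R(\lambda,T)^m\|\le C(|\lambda|-1)^{-m}$. Writing $\|R^m\|=\|R^m\|^{\alpha/m}\|R^m\|^{\beta/m}$ and applying the two bounds to the two factors gives (ii). The bound $\|R(\lambda,T)^m\|\le C(|\lambda|-1)^{-m}$ follows from power boundedness via $R(\lambda,T)^m=\sum_n\binom{n+m-1}{m-1}T^n\lambda^{-n-m}$. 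The bound $\|R(\lambda,T)^m\|\le C|\lambda-1|^{-m}$ is immediate from $\|R\|^m$.

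For (ii)$\Rightarrow$(i), the main step, I would follow \cite[Theorem 3.2]{MahRue}. By the Dunford calculus, $T^n=\frac{(-1)^{m-1}(m-1)!}{2\pi i\,\prod_{j=1}^{m-1}(n+j)}\int_{|\lambda|=\rho}\lambda^{n+m-1}R(\lambda,T)^m\,d\lambda$, obtained by integrating by parts $m-1$ times. Taking $\rho=1+1/n$ and applying (ii) then gives
$$\|T^n\|\le C n^{-(m-1)}\,n^{\beta}\int_{-\pi}^{\pi}|\rho e^{it}-1|^{-\alpha}\,dt.$$
Lemma \ref{LemmaIntRes} yields $O(n^{1-m+\beta+\alpha-1})=O(1)$ when $\alpha>1$, and $O(n^{\beta-m+1})=o(1)$ or $O(1)$ when $\alpha<1$. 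The borderline case $\alpha=1$ gives a logarithm; to avoid it I would interpolate, using the fact that (ii) for $(\alpha,\beta)$ and the trivial bounds give nearby exponent pairs. This yields power boundedness. The same representation with an extra factor $(1-\lambda)$ gives $\|T^n(I-T)\|\le Cn^{-(m-1)}n^\beta\int|\rho e^{it}-1|^{1-\alpha}dt=O(n^{-1})$ in each case, again via Lemma \ref{LemmaIntRes}. Theorem \ref{Ritt}(iii) then gives (i).

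The main obstacle I expect is the case analysis in Lemma \ref{LemmaIntRes}, in particular the borderline values $\alpha=1$ and $\alpha=2$, where logarithms appear. I would treat these either by a contour deformation near $1$ as in the proof of Theorem \ref{Nevanlinna}, or by first obtaining a Ritt-type estimate for $\|R(\lambda,T)\|$ itself from (ii) via Lemma \ref{RKres}-type arguments.
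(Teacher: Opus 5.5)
Your (i)$\Rightarrow$(ii) argument and the final appeal to Theorem \ref{Ritt} are fine. The main step (ii)$\Rightarrow$(i), however, does not close as written. With $m$ fixed and $\rho=1+\frac1n$, your Cauchy-integral bound reads $\|T^n\|\le Cn^{\beta-m+1}\int_{-\pi}^{\pi}|\rho e^{it}-1|^{-\alpha}\,dt$, and $\beta-m+1=1-\alpha$.

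Hence for $\alpha<1$, Lemma \ref{LemmaIntRes} only gives $\|T^n\|=O(n^{1-\alpha})$. That is unbounded growth, not ``$o(1)$ or $O(1)$'', and for $\alpha=1$ you get $O(\log n)$. Likewise $\|T^n(I-T)\|\le Cn^{1-\alpha}\int_{-\pi}^{\pi}|\rho e^{it}-1|^{1-\alpha}\,dt$ is $O(n^{-1})$ only for $\alpha>2$. It is $O(n^{-1}\log n)$ for $\alpha=2$, only $O(n^{1-\alpha})$ for $1<\alpha<2$, and does not decay at all for $\alpha\le1$. So the argument proves the implication only when $\alpha>2$; the trouble is not confined to logarithms at $\alpha\in\{1,2\}$.

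The repairs you sketch do not fix this. For fixed $m$, the only elementary way to trade exponents while keeping $\alpha+\beta=m$ is $|\lambda-1|\ge|\lambda|-1$, which \emph{lowers} $\alpha$, the wrong direction. Both the contour deformation of Theorem \ref{Nevanlinna} and Lemma \ref{RKres} rest on Neumann-series bounds for $\|R(\lambda,T)\|$ itself (or for all intermediate powers), and (ii) with $m\ge2$ does not provide these.

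The missing idea, which is exactly what the paper uses, is to amplify (ii) by submultiplicativity: $\|R(\lambda,T)^{km}\|\le\|R(\lambda,T)^m\|^k\le|\lambda-1|^{-k\alpha}(|\lambda|-1)^{-k\beta}$. So (ii) holds for the triple $(km,k\alpha,k\beta)$, still with $k\alpha+k\beta=km$. Choosing $k$ with $k\alpha>2$ puts everything in the non-borderline regime of Lemma \ref{LemmaIntRes}.
\begin{itemize}
\item Apply the $(km-1)$-th derivative formula to $\varphi_1(z)=z^{n+km-1}$ at $\rho_1=1+\frac1n$. This gives $\|T^n\|\le Cn^{-(km-1)}n^{k\beta}n^{k\alpha-1}=C$.
\item Apply it to $\varphi_2(z)=z^{n+km}-\bigl(1+\frac{km}{n+1}\bigr)z^{n+km-1}$, whose $(km-1)$-th derivative is $\frac{(n+km)!}{(n+1)!}(z^{n+1}-z^n)$, at $\rho_2=1+\frac{km}{n+1}$. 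This radius makes the scalar factor equal to $\rho_2|e^{it}-1|$, and you get $\|T^n(I-T)\|\le Cn^{-(km-1)}n^{k\beta}n^{k\alpha-2}=Cn^{-1}$.
\end{itemize}
Theorem \ref{Ritt} then finishes as you planned.

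On the spectral side, your fallback of assuming $\sigma(T)\subset\overline{\D}$ is the right one; the paper tacitly does the same when it integrates over $|\lambda|=\rho$ for every $\rho>1$. The Neumann-series alternative cannot work: $T_1\oplus3I$ with $T_1$ Ritt satisfies (ii) (with a constant) and $\sigma(T)\cap\mathbb{T}\subset\{1\}$, yet it is not power-bounded.
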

	
	The implication $(i) \Rightarrow (ii)$ is straightforward.
	
	For $(ii) \Rightarrow (iii)$: Recall that by the Riesz-Dunford calculus, for each $r > 1$ and $\varphi \in H^\infty(r\mathbb{D})$, we have
	\begin{equation}\label{eq:2.2}
		\varphi^{(m-1)}(T) = \frac{(m-1)!}{2\pi i} \int_{|\lambda|=\rho} \varphi(\lambda) R(\lambda, T)^{m} d\lambda, \quad 1 < \rho < r.
	\end{equation}
	Let $k \in \mathbb{N}$ such that $k\alpha > 2$. For $\varphi_1(z) = z^{n+km-1}$ and $\varphi_2(z) = z^{n+km} - \big(1+\frac{km}{n+1}\Big)z^{n+km-1}$, the derivatives are given by
	$$\varphi^{(km-1)}_1(z) = \frac{(n+km-1)!}{n!}z^n \quad \text{and} \quad \varphi^{(km-1)}_2(z) = \frac{(n+km)!}{(n+1)!}(z^{n+1} - z^n).$$
	Using \eqref{eq:2.2} and setting $\rho_1 = 1+\frac{1}{n}$ and $\rho_2 = 1+ \frac{km}{n+1}$, we obtain:
	$$
	T^n = \frac{1}{2\pi \binom{n+km-1}{km-1}} \int_{-\pi}^{\pi} \rho_1^{n+km} e^{i(n+km)t} R(\rho_1 e^{it}, T)^{km} dt,
	$$
	and
	$$
	T^n(I-T) = \frac{1}{2\pi \binom{n+km}{km-1}} \int_{-\pi}^{\pi} \rho_2^{n+km+1} e^{i(n+km)t} \left(\frac{1}{\rho_2}\Big(1+\frac{km}{n+1}\Big) - e^{it}\right) R(\rho_2 e^{it}, T)^{km} dt.
	$$
	From \cite[Lemma 3.1.]{MahRue}), we have:
	$$
	\int_{-\pi}^{\pi} \frac{1}{|(1+\frac{1}{n})e^{it}-1|^{k\alpha}} dt \le Cn^{k\alpha - 1}, \quad \int_{-\pi}^{\pi} \frac{|e^{it} -1|}{|(1+\frac{km}{n+1})e^{it}-1|^{k\alpha}} dt \le Cn^{k\alpha -2},
	$$
	and using the property $\binom{n+l}{l} \ge Cn^l$ for $l \in \mathbb{N}$ and $\alpha + \beta = m$, it follows that:
	\begin{align*}
		\norm{T^n} &\le C n^{-(km-1)} n^{k\beta} \int_{-\pi}^{\pi} \frac{1}{|(1+\frac{1}{n})e^{it}-1|^{k\alpha}}dt \\
		&\le C n^{-km+1} n^{k\beta} n^{k\alpha - 1} = Cn^{k(\alpha + \beta - m)} = C,
	\end{align*}
	and 
	\begin{align*}
		\norm{T^n(I-T)} &\le C n^{-(km-1)} n^{k\beta} \int_{-\pi}^{\pi} \frac{|e^{it}-1|}{|(1+\frac{km}{n+1})e^{it}-1|^{k\alpha}}dt \\
		&\le C n^{-km+1} n^{k\beta} n^{k\alpha - 2} = Cn^{k(\alpha + \beta - m)-1} = \frac{C}{n}.
	\end{align*}
	
	Regarding $(iii) \Rightarrow (i)$: If $T$ is power-bounded and $\norm{T^n(I-T)^k} = O(n^{-k})$, then the moment inequality (Lemma \ref{momentinequality}) implies $\norm{T^n(I-T)} = O(n^{-1})$. It is well known that a power-bounded operator satisfying this condition is a Ritt operator (see \cite[Theorem 4.5.4]{Nev2}).

	\begin{remark}
		An operator $T \in B(X)$ is said to be $\mathcal{R}$-Ritt if the set
		$$ \left\{ (\lambda - 1)R(\lambda, T) : |\lambda| > 1 \right\} $$
		is $\mathcal{R}$-bounded. We refer to \cite[Chapter 8]{HNVW2} for a detailed treatment of $\mathcal{R}$-boundedness. Using standard techniques from $\mathcal{R}$-boundedness and adapting classical Ritt theory, one can show that $T$ is $\mathcal{R}$-Ritt if and only if the set $\{T^n : n \ge 0\}$ is $\mathcal{R}$-bounded and the set $\{n T^n(I-T) : n \ge 1\}$ is $\mathcal{R}$-bounded. Consequently, the $\mathcal{R}$-bounded version of Theorem \ref{Ritt2} also holds. Further details on $\mathcal{R}$-Ritt operators can be found in \cite{LeMerdy}.
	\end{remark}
	
	\subsection*{Concluding Questions}

	Let $T\in B(X)$ be a Kreiss bounded operator, so that the moment inequality (Lemma \ref{momentinequality}) holds. Assume further that $T$ is $\alpha$-Ritt for some $\alpha > 1$. According to Nevanlinna's theorem, we have $\norm{T^n} = O(n^{\frac{\alpha-1}{\alpha}})$. 
	
	Let $f$ be a function bounded away from zero such that $\norm{T^n} \le f(n)$. By the moment inequality, for any $k \in \mathbb{N}$, we have:
	\begin{align*}
		\norm{T^n(I-T)} &\le C_k \norm{T^n}^{\frac{k-1}{k}} \norm{T^n(I-T)^k}^{\frac{1}{k}} \\
		&\le D_k f(n) n^{\frac{\alpha-k-1}{k\alpha}} = D_k f(n) n^{\frac{\alpha-1}{k\alpha}} n^{-\frac{1}{\alpha}}.
	\end{align*}
	This implies that for every $\varepsilon > 0$, $\norm{T^n(I-T)} = O(n^{-1/\alpha} f(n) n^{\varepsilon})$. This observation is particularly relevant for strongly Kreiss bounded operators on $L^p$, where $f(n) = \log^{\kappa}(n+1) n^{|1/2-1/p|}$. Note that if the constants satisfy $D_k \le D \cdot k$, then choosing $\log(n) \le k \le \log(n+1)$ leads to
	\begin{equation}\label{eq:refined_estimate}
		\norm{T^n(I-T)} = O(f(n) n^{-1/\alpha} \log(n+1)).
	\end{equation}
	This comparison with Seifert's theorem \cite[Corollary 3.1]{Sei} motivates the following question:
	
	\begin{q}
		Under the assumptions that $T$ is $\alpha$-Ritt for $\alpha >1$, Kreiss bounded and $\norm{T^n} \le f(n)$ for $f$ satisfying $\lim_{n \to \infty} f(n) = \infty$ with $f(n) = o\big(n^{\frac{\alpha-1}{\alpha}}\big)$, does the estimate \eqref{eq:refined_estimate} hold?
	\end{q}

	Another independent question concerns the sharpness of the estimates obtained in Theorem \ref{Nevanlinna}. As recalled in Section 6, Nevanlinna provided an example of a Kreiss bounded and $2$-Ritt operator showing that the estimates in Theorem \ref{Nevanlinna} are sharp. However, the case $\alpha \neq 2$ remains open:
	
	\begin{q}
		Let $\alpha > 1$ with $\alpha \neq 2$. Does there exist a Kreiss bounded and $\alpha$-Ritt operator such that for each $k \in \mathbb{N} \cup \{0\}$, there exists $C_k > 0$ satisfying for each $n\in \N$
		$$
		\norm{T^n(I-T)^k} \ge C_k n^{\frac{\alpha-k-1}{\alpha}} \,?
		$$
	\end{q}

	\noindent \textbf{Acknowledgments.}
	The author is grateful to Clément Coine for his insightful comments.


\begin{thebibliography}{99}
		
		
		\bibitem{ArnCun} L. Arnold and C. Cuny, \emph{On the growth rate of powers of a strongly Kreiss bounded operator on an $L^p$-space}, Studia Math. 288 (2026), no. 1, 1--25.
		
		\bibitem{BBMP} T. Berm\'udez, A. Bonilla, V. M\"uller and A. Peris, \emph{Ces\`aro bounded operators in Banach spaces}, J. Anal. Math. 140 (2020), no. 1, 187--206.
		
		\bibitem{BonMul} A. Bonilla and V. M\"uller, \emph{Kreiss bounded and uniformly Kreiss bounded operators}, Rev. Mat. Complut. 34 (2021), no. 2, 469--487.
		
		\bibitem{BorSpi} N. Borovykh and M. N. Spijker, \emph{Resolvent conditions and bounds on the powers of matrices, with relevance to numerical stability of initial value problems}, J. Comput. Appl. Math. 125 (2000), no. 1, 41--56.
		
		\bibitem{BorSpi2} N. Borovykh and M. N. Spijker, \emph{The sharpness of stability estimates corresponding to a general resolvent condition}, Linear Algebra Appl. 311 (2000), no. 1-3, 161--175.
		
		\bibitem{CCEL} G. Cohen, C. Cuny, T. Eisner and M. Lin, \emph{Resolvent conditions and growth of powers of operators}, J. Math. Anal. Appl. 487 (2020), no. 2, 124035, 24 pp.
		
		\bibitem{Cuny} C. Cuny, \emph{Resolvent conditions and growth of powers of operators on $L^p$ spaces}, Pure Appl. Funct. Anal. 5 (2020), no. 5, 1025--1038.
		
		
		
		\bibitem{DLV} C. Deng, E. Lorist and M. Veraar, \emph{Strongly Kreiss bounded operators in UMD Banach spaces}, Semigroup Forum 108 (2024), no. 3, 594–625.
		
		\bibitem{GroHui} J. J. Grobler and C. B. Huijsmans, \emph{Doubly Abel bounded operators with single spectrum}, Quaestiones Mathematicae 18 (1995), no. 4, 397--406.
		
		\bibitem{Haa} M. Haase, \emph{A functional calculus description of real interpolation spaces for sectorial operators}, Studia Math. 171 (2005), no. 2, 177--195.
		
		\bibitem{Haase2006} M. Haase, \emph{The Functional Calculus for Sectorial Operators}, Vol. 169, Operator Theory: Advances and Applications, Birkhäuser Verlag, Basel, 2006.
		
		\bibitem{HilPhi}
		E.~Hille and R.~S. Phillips, 
		\emph{Functional Analysis and Semi-groups}, 
		Rev. ed., American Mathematical Society Colloquium Publications, vol.~31, 
		American Mathematical Society, Providence, RI, 1957.
		
		\bibitem{HNVW2} T. Hytönen, J. van Neerven, M. Veraar and L. Weis, \emph{Analysis in Banach Spaces. Volume II: Probabilistic Methods and Operator Theory}, Ergebnisse der Mathematik und ihrer Grenzgebiete. 3. Folge, vol. 67, Springer Cham, 2017.
		
		
		\bibitem{LeMerdy} C. Le Merdy, \emph{$H^\infty$ functional calculus and square function estimates for Ritt operators}, Rev. Mat. Iberoam. 30 (2014), no. 4, 1149–1190.
		
		\bibitem{MahRue} A. Mahillo and S. Rueda, \emph{A Ritt--Kreiss condition: Spectral localization and norm estimates}, Studia Math. 276 (2024), 171--193.
		
		\bibitem{McCarthy} C. McCarthy, \emph{A strong resolvent condition does not imply power-boundedness}, Chalmers Institute of Technology and University of Gothenburg preprint 15, 1971.
		
		\bibitem{MSZ} A. Montes-Rodr\'iguez, J. S\'anchez-Alvarez and J. Zem\'anek, \emph{Uniform Abel--Kreiss boundedness and the extremal behaviour of the Volterra operator}, Proc. London Math. Soc. (3) 91 (2005), no. 3, 761--788.
		
		\bibitem{Nev2} O. Nevanlinna, \emph{Convergence of iterations for linear equations}, Lectures in Mathematics ETH Zürich, Birkhäuser Verlag, Basel, 1993.
		
		\bibitem{Nev} O. Nevanlinna, \emph{Resolvent conditions and powers of operators}, Studia Math. 145 (2001), no. 2, 113--134.
		
		\bibitem{Sei} D. Seifert, \emph{Rates of decay in the classical Katznelson--Tzafriri theorem}, J. Anal. Math. 130 (2016), no. 1, 329--354.
		
		\bibitem{TomZem} Y. Tomilov and J. Zemánek, \emph{A new way of constructing examples in operator ergodic theory}, Math. Proc. Camb. Phil. Soc. 137 (2004), 209--223.
		
		\bibitem{Zygmund} A. Zygmund, \emph{Trigonometric Series}, 2nd corrected edition (vol. I-II), Cambridge University Press, Cambridge, UK, 1968.
		
	\end{thebibliography}
\end{document}